\documentclass[12pt,reqno]{amsart}
\usepackage{amsmath,amsfonts,amsthm,amsopn,amssymb,mathrsfs,enumerate,color}
\usepackage{amsmath}
\usepackage{comment}

\usepackage{cite,marginnote}
\usepackage{mathtools} 

\usepackage{color,graphicx,enumerate}
\usepackage[colorlinks=true,urlcolor=blue,
citecolor=red,linkcolor=blue,linktocpage,pdfpagelabels,
bookmarksnumbered,bookmarksopen]{hyperref}
\usepackage[english]{babel}

\usepackage[left=2.9cm,right=2.9cm,top=2.8cm,bottom=2.8cm]{geometry}

\numberwithin{equation}{section}

\makeindex
\newtheorem{theorem}{Theorem}[section]

\newtheorem{corollary}{Corollary}[section]
\newtheorem{remark}{Remark}[section]

\newtheorem{definition}{Definition}[section]
\newtheorem{proposition}{Proposition}[section]
\newtheorem{lemma}{Lemma}[section]

\newtheorem{iteration lemma}{iteration Lemma}[section]

\newcommand{\bt}{\begin{theorem}}
\newcommand{\et}{\end{theorem}}
\newcommand{\bl}{\begin{lemma}}
\newcommand{\el}{\end{lemma}}
\newcommand{\bd}{\begin{definition}}
\newcommand{\ed}{\end{definition}}
\newcommand{\bc}{\begin{corollary}}
\newcommand{\ec}{\end{corollary}}
\newcommand{\bp}{\begin{proof}}
\newcommand{\ep}{\end{proof}}
\newcommand{\bx}{\begin{example}}
\newcommand{\ex}{\end{example}}
\newcommand{\bi}{\begin{exercise}}
\newcommand{\ei}{\end{exercise}}
\newcommand{\bo}{\begin{proposition}}
\newcommand{\eo}{\end{proposition}}
\newcommand{\br}{\begin{remark}}
\newcommand{\er}{\end{remark}}
\newcommand{\beq}{\begin{equation}}
\newcommand{\eeq}{\end{equation}}
\newcommand{\ba}{\begin{align}}
\newcommand{\ea}{\end{align}}
\newcommand{\bn}{\begin{enumerate}}
\newcommand{\en}{\end{enumerate}}
\newcommand{\bg}{\begin{align*}}
\newcommand{\bcs}{\begin{cases}}
\newcommand{\ecs}{\end{cases}}

\newcommand{\bean}{\begin{eqnarray*}}
\newcommand{\eean}{\end{eqnarray*}}

\def\bd{\mathrm{bd}\,}

\usepackage{amsmath,amssymb,amsthm,mathtools}
\usepackage{hyperref}

\theoremstyle{remark}

\title[Stability for Affine Sobolev Inequality]{Sharp Quantitative Stability for the Affine \(p\)-Sobolev Inequality, Part I: The Case \(2\le p<n\)}

\author[S.~Fan]{Song Fan}
\author[G-D.~Li]{Gui-Dong Li}
\author[J.~J.~Zhang]{Jianjun Zhang}
	
\address[S.~Fan]{\newline\indent School  of  Mathematics  and  Statistics
\newline\indent
Guizhou University
\newline\indent
Guiyang, 550025, Guizhou, PR China}
\email{\href{mailto:doraemonsong77@gmail.com}{doraemonsong77@gmail.com}}

\address[G-D.~Li]{\newline\indent School  of  Mathematics  and  Statistics
\newline\indent
Guizhou University
\newline\indent
Guiyang, 550025, Guizhou, PR China}
\email{\href{mailto:bestdong123@163.com}{bestdong123@163.com}}

\address[J.~J.~Zhang]{\newline\indent College of Mathematics and Statistics
\newline\indent
Chongqing Jiaotong University
\newline\indent
Xuefu, Nan'an, 400074, Chongqing, PR China}
\email{\href{mailto:zhangjianjun09@tsinghua.org.cn}{zhangjianjun09@tsinghua.org.cn}}

\begin{document}

\begin{abstract}

We prove a sharp quantitative stability  result for the affine
\(L^p\)-Sobolev inequality, for \(p\ge2\), introduced by
Lutwak--Yang--Zhang (\emph{J. Differential Geom.}, \textbf{62} (2002), 17--38).
Moreover, the stability exponent is shown to be optimal, and equal to \(p\).

\vskip0.23in

\noindent{\it   {\bf Key  words:} Sharp quantitative stability, Affine \(L^p\) Sobolev inequality.}

\vskip0.1in
\noindent{\it  {\bf 2020 Mathematics Subject Classification:} Primary 46E35; Secondary 26D10, 35A23.}

\end{abstract}

\maketitle

\section{Introduction}

Sharp Sobolev inequalities, together with their quantitative stability
theory, play a fundamental role in geometric analysis and nonlinear partial
differential equations. In the Euclidean setting, the sharp Sobolev inequality
and the classification of its extremals go back to the seminal works of Aubin
and Talenti~\cite{Aubin76,Talenti76}. Let \(1<p<n\), and set
\( p^*=\frac{np}{n-p}\), \(p'=\frac{p}{p-1}\).
The sharp Sobolev inequality asserts that
\begin{equation}
\label{eq:intro-sobolev}
\|\nabla u\|_{L^p(\mathbb R^n)}^p
\ge
S_{n,p}^p
\|u\|_{L^{p^*}(\mathbb R^n)}^p,
\qquad
u\in\dot W^{1,p}(\mathbb R^n).
\end{equation}
Equality in \eqref{eq:intro-sobolev} holds precisely for the Talenti family
\cite{Talenti76,Aubin76}
\begin{equation*}
U_{c,\lambda,x_0}(x)
=
c\,\lambda^{\frac{n-p}{p}}
\left(1+\lambda^{p'}|x-x_0|^{p'}\right)^{-\frac{n-p}{p}},
\qquad
c\in\mathbb R,\quad \lambda>0,\quad x_0\in\mathbb R^n .
\end{equation*}
Equivalently, after fixing the normalized bubble
\[
U(x):=\left(1+|x|^{p'}\right)^{-\frac{n-p}{p}},
\]
the classical manifold of extremals is given by
\begin{equation*}
\mathcal M_{\rm Sob}
:=
\left\{
c\,\lambda^{\frac{n-p}{p}}U(\lambda(\cdot-x_0)):
c\in\mathbb R,\ \lambda>0,\ x_0\in\mathbb R^n
\right\}.
\end{equation*}

The quantitative stability problem for sharp Sobolev-type inequalities has its
roots in the work of Br\'ezis--Lieb~\cite{BL85} and
Bianchi--Egnell~\cite{BE91}. For the classical Sobolev inequality
\eqref{eq:intro-sobolev}, one asks whether the deficit
\begin{equation}
\label{def-deficit-classical-p}
\delta_{\rm Sob}(u)
:=
\|\nabla u\|_{L^p(\mathbb R^n)}^p
-
S_{n,p}^p\|u\|_{L^{p^*}(\mathbb R^n)}^p
\end{equation}
controls the distance from \(u\) to the extremal manifold
\(\mathcal M_{\rm Sob}\). In the Hilbertian case \(p=2\), this was proved by
Bianchi--Egnell~\cite{BE91}. For general \(1<p<n\),
Cianchi--Fusco--Maggi--Pratelli obtained a quantitative Sobolev inequality
with a remainder term involving a distance to the family of
extremals~\cite{CFMP09}. Strong gradient-distance stability for \(p\ge2\) was
proved by Figalli--Neumayer~\cite{FN19}, extended in strong form to the full
range \(1<p<n\) by Neumayer~\cite{Neumayer20}, and the sharp gradient
stability exponent in the full range was established by
Figalli--Zhang~\cite{FZ22}. More recently,
Dolbeault--Esteban--Figalli--Frank--Loss proved sharp stability estimates with
explicit constants and optimal dimensional dependence~\cite{DEFFL25}.

A parallel quantitative theory has been developed for several related critical
Sobolev-type inequalities. For \(BV\)-Sobolev and anisotropic Sobolev
inequalities, see Fusco--Maggi--Pratelli~\cite{FMP07} and
Figalli--Maggi--Pratelli~\cite{FMP13}. Fractional Sobolev stability was proved
by Chen--Frank--Weth~\cite{CFW13}, and the sharp stability constant together
with its asymptotic behavior has been further investigated by K\"onig~\cite{Konig23}
and Chen--Lu--Tang~\cite{CLT25}. Stability results are also available for
Hardy--Littlewood--Sobolev inequalities, Gagliardo--Nirenberg--Sobolev
inequalities, and trace inequalities; see, for instance,
\cite{CLT24,Carlen25,ZZ25,Nguyen19,CF13,Seuffert16,ZZZ25,Ho22}.

Affine Sobolev theory provides a genuinely stronger and more geometric
refinement of \eqref{eq:intro-sobolev}. The endpoint affine Sobolev--Zhang
inequality was established by Zhang~\cite{Zhang99}, and the sharp affine
\(L^p\)-Sobolev inequality for \(1<p<n\) was proved by
Lutwak--Yang--Zhang~\cite{LYZ02}, building on the \(L_p\) affine isoperimetric
theory developed in~\cite{LYZ00}. To state the inequality in a normalization
compatible with \eqref{eq:intro-sobolev}, we set, for
\(u\in\dot W^{1,p}(\mathbb R^n)\) and \(\xi\in\mathbb S^{n-1}\),
\begin{equation}
\label{eq:def-Axi}
A_\xi(u)
:=
\int_{\mathbb R^n}|\partial_\xi u|^p\,dx,
\qquad
\partial_\xi u:=\xi\cdot\nabla u .
\end{equation}
Let
\[
\langle g\rangle_{\mathbb S^{n-1}}
:=
\frac1{|\mathbb S^{n-1}|}
\int_{\mathbb S^{n-1}}g(\xi)\,d\sigma(\xi),
\qquad
m_{n,p}
:=
\left\langle |\omega_1|^p\right\rangle_{\omega\in\mathbb S^{n-1}} .
\]
We define the normalized affine \(L^p\)-energy by
\begin{equation}
\label{eq:intro-affine-energy-p}
\mathcal E(u)
:=
m_{n,p}^{-1/p}
\left\langle
A_\xi(u)^{-n/p}
\right\rangle_{\xi\in\mathbb S^{n-1}}^{-1/n}.
\end{equation}
With this convention, the sharp affine \(L^p\)-Sobolev inequality of
Lutwak--Yang--Zhang~\cite{LYZ02} takes the form
\begin{equation}
\label{eq:intro-affine-Lp}
S_{n,p}^p
\|u\|_{L^{p^*}(\mathbb R^n)}^p
\le
\mathcal E(u)^p
\le
\|\nabla u\|_{L^p(\mathbb R^n)}^p .
\end{equation}
Thus the affine energy refines the classical Dirichlet energy and, in
particular, recovers the sharp Sobolev inequality \eqref{eq:intro-sobolev} as a consequence. The
normalization in \eqref{eq:intro-affine-energy-p} is chosen so that
\(\mathcal E(u)^p=\|\nabla u\|_{L^p(\mathbb R^n)}^p\) for every
radial \(u\). In particular, the Talenti bubble \(U\) is an equality case in
both inequalities in \eqref{eq:intro-affine-Lp}. The affine inequality is
invariant under volume-preserving affine transformations, whereas the
classical Sobolev inequality is invariant only under Euclidean motions and
dilations. Accordingly, the equality cases in \eqref{eq:intro-affine-Lp} are
precisely the affine images of the Talenti bubbles~\cite{LYZ02}. This leads
to the affine extremal cone
\begin{equation*}
\mathcal M_{\rm aff}
:=
\left\{
c\,\lambda^{\frac{n-p}{p}}
U\bigl(\lambda A(x-x_0)\bigr):
c\in\mathbb R,\ \lambda>0,\ x_0\in\mathbb R^n,\ A\in SL(n)
\right\}.
\end{equation*}

Subsequent work has developed affine Sobolev theory in several directions.
Haberl--Schuster obtained asymmetric affine \(L^p\)-Sobolev
inequalities~\cite{HS09}, and Haberl--Schuster--Xiao established an
asymmetric affine P\'olya--Szeg\H{o} principle with applications to affine
logarithmic Sobolev inequalities~\cite{HSX12}.
Cianchi--Lutwak--Yang--Zhang proved affine Moser--Trudinger and
Morrey--Sobolev inequalities~\cite{CLYZ09}. Wang extended the affine
Sobolev--Zhang inequality to \(BV(\mathbb R^n)\) and characterized equality
cases~\cite{Wang12}. A different approach, based on the \(L^p\)
Busemann--Petty centroid inequality, was developed by
Haddad--Jimenez--Montenegro, yielding sharp affine Sobolev, log-Sobolev, and
Gagliardo--Nirenberg inequalities together with equality
cases~\cite{HJM16}; see also the weighted affine Sobolev inequalities
in~\cite{HJM19}. Affine trace and fractional variants have likewise been
studied, for example by De N\'apoli--Haddad--Jimenez--Montenegro~\cite{DHJM18}
and Haddad--Ludwig~\cite{HL24}.

Quantitative stability in the affine setting is considerably less developed.
Wang proved equality and stability results for the affine
P\'olya--Szeg\H{o} principle~\cite{Wang13}, and Nguyen obtained a stability
version of the affine Sobolev inequality in the \(BV\) framework via a new
approach to affine P\'olya--Szeg\H{o} symmetrization~\cite{Nguyen16}. More
recently, sharp stability for the affine fractional \(L^2\)-Sobolev
inequality was studied by Fan--Li--Zhang~\cite{FLZ26-1}.
The purpose of this paper is to prove a sharp \(W^{1,p}\)-gradient-distance
stability estimate for the affine \(L^p\)-Sobolev inequality \eqref{eq:intro-affine-Lp}, for
\(2\le p<n\).

For \(\lambda>0\), \(A\in SL(n)\), and \(x_0\in\mathbb R^n\), we write
\[
(T_{\lambda A,x_0}u)(x)
=
\lambda^{-\frac{n-p}{p}}
u\bigl(\lambda^{-1}A^{-1}x+x_0\bigr).
\]

Our main result is the following sharp affine stability theorem.

\begin{theorem}
\label{thm:intro-main-affine-p}
Let \(2\le p<n\). There exists a constant \(c_{n,p}>0\) such that every
nonzero \(u\in\dot W^{1,p}(\mathbb R^n)\) satisfies
\begin{equation}
\label{eq:intro-main-affine-p}
\frac{\mathcal E(u)}
{S_{n,p}\|u\|_{L^{p^*}(\mathbb R^n)}}-1
\ge
c_{n,p}
\left(  \inf_{\substack{a\in\mathbb R\\ A\in SL(n),\,\lambda>0,\,
x_0\in\mathbb R^n}}
\frac{
\left\|
\nabla\left(T_{\lambda A,x_0}u-aU\right)
\right\|_{L^p(\mathbb R^n)}
}{
\left\|
\nabla\left(T_{\lambda A,x_0}u\right)
\right\|_{L^p(\mathbb R^n)}}\right)^{p}.
\end{equation}
The exponent \(p\) is sharp.
\end{theorem}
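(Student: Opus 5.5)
Our strategy is to reduce the affine statement to the sharp Euclidean gradient stability of Figalli--Zhang~\cite{FZ22} (in the range \(p\ge2\), Figalli--Neumayer~\cite{FN19} already gives exponent \(p\)). The reduction uses an affine normalization together with a quantitative comparison between \(\mathcal E(u)\) and \(\|\nabla u\|_{L^p}\). Both sides of \eqref{eq:intro-main-affine-p} are invariant under \(u\mapsto T_{\lambda A,x_0}u\): for the left side this is the \(SL(n)\)-invariance of \(\mathcal E\) and of \(\|u\|_{L^{p^*}}\), and the right side is an infimum over this group. We may therefore replace \(u\) by an affine image in \emph{isotropic position}, namely one for which \(\|\nabla(u\circ A)\|_{L^p}\) is minimal over \(A\in SL(n)\). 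Such a minimizer exists: the map \(A\mapsto\int|A^{T}\nabla u|^p\) is coercive on \(SL(n)\) modulo \(SO(n)\), because \(A_\xi(u)>0\) for every \(\xi\) when \(u\neq0\).

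The key steps are as follows.

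\emph{Step 1.} Put \(u\) in isotropic position. The first-order condition says that the matrix
\[
M=\int|\nabla u|^{p-2}\nabla u\otimes\nabla u\,dx
\]
is a multiple of the identity. Equivalently, this is the \(L^p\) John-type position of the LYZ projection body \(\Pi_pu\).

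\emph{Step 2.} Prove the quantitative comparison
\[
\|\nabla u\|_p^p-\mathcal E(u)^p\le C\big(\mathcal E(u)^p-S_{n,p}^p\|u\|_{p^*}^p\big)+C\,\|u\|_{p^*}^p\,\delta,
\]
where \(\delta\) is the normalized deficit on the left of \eqref{eq:intro-main-affine-p}. What we really want is that, in isotropic position,
\[
\delta_{\rm Sob}(u)\lesssim \|\nabla u\|_p^p\,\Big(\frac{\mathcal E(u)}{S_{n,p}\|u\|_{p^*}}-1\Big).
\]
To get this, note that \(\mathcal E\) is a power mean of order \(-n/p\) of \(A_\xi\), while \(\|\nabla u\|_p^p\) is \(m_{n,p}^{-1}\langle A_\xi\rangle\) (Fubini over directions). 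Hence their gap is controlled by the variance of \(\xi\mapsto A_\xi(u)^{1/p}\), that is, by the deviation of the norm of \(\Pi_pu\) from a Euclidean ball. We handle this in two regimes.

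\emph{Step 3 (small affine deficit).} By the LYZ equality characterization and a compactness/concentration-compactness argument (Lions), a small \(\delta\) forces \(u\), after symmetries, to be close to \(\mathcal M_{\rm aff}\). Isotropic position then pins \(A\) near \(SO(n)\). Near a bubble we expand both functionals to second order. In isotropic position the linear term of \(A_\xi\) along the tangent directions of \(SL(n)\), which are the trace-free matrices, vanishes. The second variation of \(\mathcal E\) therefore dominates that of the Dirichlet energy up to a compact perturbation. Hence \(\delta_{\rm Sob}(u)\le C\|\nabla u\|_p^p\,\delta\), and we then apply~\cite{FZ22}:
\[
\inf\|\nabla(u-v)\|_p^p/\|\nabla u\|_p^p\lesssim\delta_{\rm Sob}/\|\nabla u\|_p^p.
\]
The Euclidean infimum dominates the affine one, so the estimate follows.

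\emph{Step 4 (large deficit).} If \(\delta\ge\delta_0\), the right side of \eqref{eq:intro-main-affine-p} is at most \(1\) (take \(a=0\)), so the inequality holds with \(c=\delta_0\).

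\emph{Step 5 (sharpness).} Test with \(u_\varepsilon=U+\varepsilon\varphi\), where \(\varphi\) is a radial function supported far out. For \(p>2\) the Dirichlet energy of such a perturbation enters at order \(\varepsilon^p\), because of the \(p\)-homogeneous term where \(\nabla U\) is small. This is the Figalli--Zhang mechanism. Radial perturbations keep \(\mathcal E=\|\nabla\cdot\|_p\), so the deficit is \(\sim\varepsilon^p\) while the distance is \(\sim\varepsilon\). Hence no exponent smaller than \(p\) is possible; for \(p=2\) the exponent \(2\) is already optimal by the Bianchi--Egnell second-order expansion.

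The main obstacle is Step 3: a degenerate second-order analysis for \(p>2\), where the Hessian of the energy at \(U\) is only a weighted operator. It has to be carried out simultaneously for the affine power mean, so we must show that the \(SL(n)\) directions are exactly absorbed by the isotropic normalization. I would first try the Figalli--Zhang splitting \(|a+b|^p\ge|a|^p+p|a|^{p-2}a\cdot b+c(|a|^{p-2}|b|^2+|b|^p)\), applied direction-wise to \(\partial_\xi u\), and average over \(\xi\). A Jensen-type convexity estimate for the \(-n/p\) power mean then shows that its loss relative to the arithmetic mean is quadratic in the oscillation of \(A_\xi\), and that this oscillation vanishes to first order under isotropy.
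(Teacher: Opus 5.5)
\textbf{Verdict: there is a genuine gap in Step 3.} Your outer logic is sound. Both sides are invariant, Step~4 handles large deficit, and a comparison $\delta_{\rm Sob}(u)\lesssim\|\nabla u\|_p^p\,\delta$ in isotropic position would let you invoke \cite{FZ22}. The problem is that this comparison rests on a claim that is false for $p>2$: that in isotropic position the oscillation of $\xi\mapsto A_\xi(u)$ vanishes to first order. That holds exactly only for $p=2$, where $A_\xi(u)=\xi\cdot M\xi$.

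For $p>2$, write $u=U+\varepsilon\phi$ with $\phi=\sum f_{\ell,m}(r)Y_{\ell,m}(\theta)$. The first-order oscillation is $L_\xi(\phi)=p\sum_{\ell,m}d_{\ell,p}Y_{\ell,m}(\xi)R_\ell(f_{\ell,m})$, where the $d_{\ell,p}$ are the Funk--Hecke coefficients of $|t|^p$. The linearized condition ``trace-free part of $M=0$'' involves only the Funk--Hecke transform of $t^2$. A short computation shows it is exactly $R_2(f_{2,m})=0$ for all $m$. So $\phi=f(r)Y_{4,m}(\theta)$ is isotropic to first order. Yet $L_\xi(\phi)=p\,d_{4,p}R_4(f)Y_{4,m}(\xi)$ is not constant in $\xi$, because $d_{4,p}/d_{0,p}=\frac{(p/2)(p/2-1)}{\frac{n+p}{2}(\frac{n+p}{2}+1)}\neq0$.

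Hence the variance correction $\mathcal R_p$ survives in the even sectors $\ell\ge4$. ``Domination up to a compact perturbation'' gives only a G{\aa}rding-type inequality, which leaves room for a finite-dimensional kernel. Suppose $Q_{\rm aff,p}$ vanished on such a $\phi$. Along $U+\varepsilon\phi$, brought to exact isotropy by a higher-order $SL(n)$ correction, you would have $\delta_{\rm aff}=o(\varepsilon^2)$ while $\delta_{\rm Sob}\gtrsim\varepsilon^2$, and your comparison would fail.

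The missing ingredient is the quantitative nondegeneracy $\mathcal R_p[\phi_\ell]\le(1-\eta_{n,p})Q_{\rm Sob,p}[\phi_\ell]$ for even $\ell\ge4$. This is Lemma~\ref{lem:high-even-sector-p}. Its proof uses the Riesz identity $Q^{(\ell)}_{\rm Sob,p}(rU',g)=pR_\ell(g)$, the resulting dual norm of $R_\ell$, and the calibration of $\beta_{2,p}$ from the zero modes $x\cdot B\nabla U$. It also uses the Saalsch\"utz evaluation of $d_{2m,p}/d_{2,p}$, which gives $\nu_m\le\nu_2<1$. Since $R_2(f)=\frac1p Q^{(2)}_{\rm Sob,p}(rU',f)$, your isotropy condition is the $Q_{\rm Sob,p}$-orthogonal complement of $rU'$ in degree two. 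It is a valid substitute for the paper's $L^2(U^{p^*-2})$-orthogonality to $x\cdot B\nabla U$, but only in that sector.

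\textbf{Second gap: the Euclidean theorem cannot be used as a black box.} Even granting the sector estimate, the nonlinear comparison does not follow from \cite{FZ22} as stated. Near $U$ the gap $\|\nabla u\|_p^p-\mathcal E(u)^p$ is of size $\frac p2\varepsilon^2\mathcal R_p(\phi)$, i.e.\ of order $\varepsilon^2$. The Euclidean theorem only yields $\delta_{\rm Sob}(U+\varepsilon\phi)\gtrsim\varepsilon^p$. To absorb the gap into $(1-\eta)\delta_{\rm Sob}$, you need a finite-scale lower bound whose quadratic part carries the sharp constant. That means pushing the Figalli--Zhang directional expansion through the negative mean and adding a compactness argument based on the weighted embedding. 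This is the content of Lemma~\ref{lem:affine-energy-lower-scale}, Lemma~\ref{lem:averaged-affine-lsc} and Proposition~\ref{prop:calibrated-affine-spectral}. Those results already give the local affine estimate directly, so the detour through the Euclidean theorem buys nothing.

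\textbf{What is salvageable.} The other pieces of your plan are acceptable variants of the paper's. The isotropic minimizer can serve as the anti-escape normalization in place of John's ellipsoid. Radial far-out perturbations can replace translated bumps in the sharpness argument. Your compactness step, however, still needs the directional decoupling and the inequality $\Phi(a+b)\ge\Phi(a)+\Phi(b)$ to exclude dichotomy.
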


\subsection{Strategy of the proof}

Our argument is modeled on the now classical stability scheme developed by
Bianchi--Egnell~\cite{BE91}, Figalli--Neumayer~\cite{FN19}, and
Figalli--Zhang~\cite{FZ22}. In the affine setting, however, two genuinely new
features enter. First, the affine energy is not given by a single Dirichlet
integral, but rather by a negative mean of the directional energies
\[
A_\xi(u)=\int_{\mathbb R^n}|\partial_\xi u|^p\,dx,
\qquad
\mathcal E(u)^p
=
m_{n,p}^{-1/p}
\left\langle
A_\xi(u)^{-n/p}
\right\rangle_{\mathbb S^{n-1}}^{-p/n}.
\]
Second, the second variation of this negative mean produces a genuinely
affine correction \(\mathcal R_p\), and this enlarges the kernel of the
classical Sobolev Hessian.

We begin with the local analysis near the normalized bubble. Thus we consider
\[
u=U+\varepsilon\phi,
\qquad
\|\nabla\phi\|_{L^p}=1,
\qquad
\phi\perp_{L^2(U^{p^*-2})}T_U\mathcal M_{\rm aff},
\qquad
0<\varepsilon\ll1 .
\]
The objective at this stage is to prove the lower bound
\begin{equation}
\label{eq:intro-locally estimate-p}
\delta_{\rm aff}(U+\varepsilon\phi)
\ge c\varepsilon^p .
\end{equation}

The pointwise nonlinear expansion from Figalli--Zhang~\cite{FZ22} yields, for
each fixed \(\xi\in\mathbb S^{n-1}\),
\[
A_\xi(U+\varepsilon\phi)
\ge
A_0+
\varepsilon L_\xi(\phi)+
\varepsilon^2(1-\kappa)
\mathfrak B_{\xi,\varepsilon}(\phi)+
c_\kappa\varepsilon^p A_\xi(\phi).
\]
Passing this lower bound through the negative mean yields the finite-scale
quadratic form; see Lemma~\ref{lem:affine-energy-lower-scale},
\begin{equation}
\label{eq:intro-finite-scale}
\mathcal N_{\rm aff}^{\varepsilon,\kappa}(\phi)
=
\alpha_{n,p}(1-\kappa)
\left\langle
\mathfrak B_{\xi,\varepsilon}(\phi)
\right\rangle_{\mathbb S^{n-1}}
-
\alpha_{n,p}(1+\kappa)
\frac{\tau+1}{2A_0}
\operatorname{Var}_{\xi}(L_\xi(\phi)),
\quad
\tau=\frac np .
\end{equation}
The variance term is precisely the new affine contribution, absent in the
classical theory. In the limit \(\varepsilon\downarrow0\), \eqref{eq:intro-finite-scale} converges to the affine Hessian
\[
Q_{\rm aff,p}(\phi)
=
Q_{\rm Sob,p}(\phi)
-
\alpha_{n,p}
\frac{\tau+1}{pA_0}
\operatorname{Var}_{\xi}(L_\xi(\phi)).
\]

A central step in the proof is the identification of the affine kernel:
\[
\ker Q_{\rm aff,p}
=
T_U\mathcal M_{\rm aff}
=
\operatorname{span}
\left\{
U,\ Z_0,\ \partial_{x_1}U,\dots,\partial_{x_n}U,\ x\cdot B\nabla U
\right\},
\qquad
B=B^T,\quad \operatorname{tr}B=0 .
\]
This is achieved by decomposing perturbations into physical spherical harmonic
sectors. The radial and translation sectors are governed by the classical
nondegeneracy theory for the critical \(p\)-Sobolev bubble
\cite{PV21,FN19,FZ22}. The genuinely affine part is the degree-two
trace-free sector, where the variance correction creates precisely the
additional Jacobi fields \(x\cdot B\nabla U\). This mechanism is parallel to
the affine fractional Hilbertian case studied in \cite{FLZ26-1}.

Once the affine kernel has been identified, one obtains a spectral gap; see Proposition~\ref{prop:calibrated-affine-spectral}, namely
\[
\mathcal N_{\rm aff}^{\varepsilon,\kappa}(\phi)
\ge
\frac p2
\left((p^*-1)\Lambda+\eta_0\right)
\int_{\mathbb R^n}U^{p^*-2}\phi^2\,dx,
\qquad  \phi\perp_{L^2(\mathbb R^n,U^{p^*-2}dx)}
T_U\mathcal M_{\rm aff}.
\]
Subtracting the corresponding
\(L^{p^*}\)-expansion then yields \eqref{eq:intro-locally estimate-p}.
This establishes the local affine stability estimate.

The global estimate is then obtained by contradiction. A John-ellipsoid affine
normalization \cite[Theorem~10.12.2]{Schneider14} rules out affine escape and controls the classical Dirichlet
energy by the affine energy. One then applies the critical Sobolev profile
decomposition. Combined with directional decoupling and the reverse Minkowski
inequality for the negative mean, this excludes splitting and shows that every
affine near-extremizing sequence converges, after affine transformations and
multiplication by constants, to an affine Talenti bubble. The local estimate
then upgrades this qualitative compactness statement to the full global
stability inequality.

Finally, we prove that the exponent \(p\) is optimal. For this we test the
deficit against translated bumps
\[
0\ne\zeta\in C_c^\infty(\mathbb R^n)
\qquad
\zeta_R(x):=\zeta(x-Re_1),
\qquad
u_{\varepsilon,R}:=U+\varepsilon\zeta_R .
\]

The paper is organized as follows. Section~\ref{subsec:second-variation-affine-p}
is devoted to the second variation of the affine energy at the Talenti bubble.
There we derive both the limiting affine Hessian and the finite-scale
quadratic form governing the nonlinear analysis, and we reduce the local
stability problem to the calibrated estimate of
Proposition~\ref{prop:calibrated-affine-spectral}. Section~\ref{sec:pro-spectral} is devoted
to the proof of Proposition~\ref{prop:calibrated-affine-spectral}. In Section~\ref{sec:global-affine-stability}, we establish
qualitative compactness for affine near-extremizers and prove Theorem~\ref{thm:intro-main-affine-p}. Finally,
Section~\ref{sec:sector-decomposition-affine-p} contains the spherical
harmonic analysis of the affine Hessian, including the identification of its
kernel and the positivity of the higher even sectors.

\section{Second variation of the affine energy}
\label{subsec:second-variation-affine-p}

Let
\[
\tau=\frac np,
\qquad
p^*=\frac{np}{n-p},
\qquad
\Phi(a)=
\left\langle a(\xi)^{-\tau}\right\rangle_{\mathbb S^{n-1}}^{-1/\tau}.
\]
By \eqref{eq:def-Axi}, the affine energy admits the representation
\begin{equation}
\label{eq:affine-energy}
 \mathcal E(u)^p
=
\alpha_{n,p}\Phi(A_\xi(u)),
\qquad   \alpha_{n,p}=m_{n,p}^{-1},
\qquad m_{n,p}=
\left\langle |\omega_1|^p\right\rangle_{\omega\in\mathbb S^{n-1}}.
\end{equation}
We also record the elementary subadditivity property
\begin{equation}
\label{eq:sub-add}
\Phi(a+b)\ge \Phi(a)+\Phi(b),
\end{equation}
valid for all nonnegative \(a,b\).

Suppose now that \(u=u(r)\) is radial. Writing \(x=r\theta\), one has
\[
\partial_\xi u(x)=u'(r)(\theta\cdot\xi).
\]
Hence, by rotation invariance, \eqref{eq:def-Axi} reduces to
\begin{equation*}
A_\xi(u)
=
\int_0^\infty |u'(r)|^p r^{n-1}\,dr
\int_{\mathbb S^{n-1}}|\theta\cdot\xi|^p\,d\sigma(\theta)=
m_{n,p}\|\nabla u\|_{L^p(\mathbb R^n)}^p .
\end{equation*}
In particular, \(A_\xi(u)\) is independent of \(\xi\), and therefore
\[
\Phi(A_\xi(u))
=
m_{n,p}\|\nabla u\|_{L^p(\mathbb R^n)}^p .
\]

Since \(U\) is radial, \(A_\xi(U)\) is likewise independent of \(\xi\); we denote
this value by
\(A_\xi(U)=A_0\).
Let \(u_t=U+t\phi\), where \(\phi\in C_c^\infty(\mathbb R^n)\). By
\eqref{eq:def-Axi},
\[
A_\xi(u_t)
=
A_0+tL_\xi(\phi)+t^2B_\xi(\phi)+o(t^2),
\]
where
\begin{equation}
\label{eq:def-Lxi}
L_\xi(\phi) :=
p\int_{\mathbb R^n}
|\partial_\xi U|^{p-2}\partial_\xi U\,\partial_\xi\phi\,dx,
\end{equation}
and
\[
B_\xi(\phi) :=
\frac{p(p-1)}2
\int_{\mathbb R^n}
|\partial_\xi U|^{p-2}|\partial_\xi\phi|^2\,dx .
\]
Expanding
\[
\mathcal E(u)^p
=
\alpha_{n,p}
\Phi(A_\xi(u)),
\qquad
\Phi(a)=\left\langle a^{-\tau}\right\rangle_{\mathbb S^{n-1}}^{-1/\tau},
\]
around the constant function \(A_0\), we obtain
\begin{equation}
\label{eq:affine-energy-expansion-p-clean}
\begin{aligned}
\mathcal E(U+t\phi)^p
&=
\alpha_{n,p}A_0
+ t\alpha_{n,p}
\left\langle L_\xi(\phi)\right\rangle_{\mathbb S^{n-1}}
\\
&\quad
+ t^2\alpha_{n,p}
\left[
\left\langle B_\xi(\phi)\right\rangle_{\mathbb S^{n-1}}
-
\frac{\tau+1}{2A_0}
\operatorname{Var}_{\xi}
\bigl(L_\xi(\phi)\bigr)
\right]
+ o(t^2),
\end{aligned}
\end{equation}
where
\begin{equation}
\label{eq:def-varxi}
 \operatorname{Var}_{\xi}(L_\xi(\phi))
:=
\left\langle L_\xi(\phi)^2\right\rangle_{\mathbb S^{n-1}}
-
\left\langle L_\xi(\phi)\right\rangle_{\mathbb S^{n-1}}^2.
\end{equation}

We next collect the angular identities needed below. Recalling that
\(m_{n,p}=\left\langle |\omega_1|^p\right\rangle_{\omega\in\mathbb S^{n-1}}\),
for every \(\theta\in\mathbb S^{n-1}\) and every \(\eta\in\mathbb R^n\), one has
\begin{equation}
\label{eq:angular-linear-identity-p}
\left\langle
|\theta\cdot\xi|^{p-2}(\theta\cdot\xi)(\eta\cdot\xi)
\right\rangle_{\xi}
=
m_{n,p}\,\theta\cdot\eta,
\end{equation}
and
\begin{equation}
\label{eq:angular-quadratic-identity-p}
\left\langle
|\theta\cdot\xi|^{p-2}(\eta\cdot\xi)^2
\right\rangle_{\xi}
=
\frac{m_{n,p}}{p-1}
\left(
|\eta|^2+(p-2)(\theta\cdot\eta)^2
\right).
\end{equation}
Indeed, by rotation invariance, it is enough to take \(\theta=e_1\). The two
identities then follow by differentiating once and twice the identity
\begin{equation}
\label{eq:identify-z}
\left\langle |\xi\cdot z|^p\right\rangle_{\xi}
=
m_{n,p}|z|^p
\end{equation}
at \(z=e_1\).

Since \(U=U(r)\), writing \(x=r\theta\), we have
\begin{equation}
\label{eq:xiU}
\partial_\xi U=U'(r)(\theta\cdot\xi).
\end{equation}
Using \eqref{eq:angular-linear-identity-p}, together with
\[
-\Delta_pU=\Lambda U^{p^*-1},
\qquad \Lambda
=
n\left(\frac{n-p}{p-1}\right)^{p-1},
\]
we infer
\begin{equation}
\label{eq:average-Lxi-p}
\alpha_{n,p}
\left\langle L_\xi(\phi)\right\rangle_{\mathbb S^{n-1}}
=
p\int_{\mathbb R^n}
|\nabla U|^{p-2}\nabla U\cdot\nabla\phi\,dx
=
p\Lambda\int_{\mathbb R^n}U^{p^*-1}\phi\,dx .
\end{equation}
Similarly, \eqref{eq:angular-quadratic-identity-p} yields
\begin{equation}
\label{eq:average-Bxi-dirichlet-p}
\frac{2}{p}\alpha_{n,p}
\left\langle B_\xi(\phi)\right\rangle_{\mathbb S^{n-1}}
=
\int_{\mathbb R^n}
|\nabla U|^{p-2}
\left(
|\nabla\phi|^2
+(p-2)
\frac{(\nabla U\cdot\nabla\phi)^2}{|\nabla U|^2}
\right)\,dx.
\end{equation}

We now recall the quadratic form associated with the classical Sobolev deficit.
Recall \eqref{def-deficit-classical-p},
\[
\delta_{\rm Sob}(u)
:=
\|\nabla u\|_{L^p(\mathbb R^n)}^p
-
S_{n,p}^p\|u\|_{L^{p^*}(\mathbb R^n)}^p ,
\qquad  S_{n,p}^p=\Lambda \left(\int_{\mathbb R^n}U^{p^*}\,dx\right)^{1-\frac{p}{p^*}}.
\]
The Hessian \(Q_{\rm Sob,p}\) is defined by
\[
\delta_{\rm Sob}(U+t\phi)
=
\frac p2t^2Q_{\rm Sob,p}(\phi)+o(t^2).
\]
A straightforward second-order expansion gives
\begin{equation}
\label{eq:classical-p-homogeneous-Hessian}
\begin{aligned}
Q_{\rm Sob,p}(\phi)
&=
\int_{\mathbb R^n}
|\nabla U|^{p-2}
\left(
|\nabla\phi|^2
+(p-2)
\frac{(\nabla U\cdot\nabla\phi)^2}{|\nabla U|^2}
\right)\,dx\\
&\quad-
(p^*-1)\Lambda
\int_{\mathbb R^n}U^{p^*-2}\phi^2\,dx
\\
&\quad+(p^*-p)\Lambda \left(\int_{\mathbb R^n}U^{p^*}\,dx\right)^{-1}
\left(
\int_{\mathbb R^n}U^{p^*-1}\phi\,dx
\right)^2 .
\end{aligned}
\end{equation}

Finally, define the affine deficit by
\begin{equation}
\label{eq:def-affine-deficit}
\delta_{\rm aff}(u)
:=
\mathcal E(u)^p
-
S_{n,p}^p\|u\|_{L^{p^*}(\mathbb R^n)}^p ,
\end{equation}
and define \(Q_{\rm aff,p}\) through
\[
\delta_{\rm aff}(U+t\phi)
=
\frac p2t^2Q_{\rm aff,p}(\phi)+o(t^2).
\]
Combining \eqref{eq:affine-energy-expansion-p-clean},
\eqref{eq:average-Bxi-dirichlet-p}, and
\eqref{eq:classical-p-homogeneous-Hessian}, we arrive at
\begin{equation}
\label{eq:Q-aff-decomp-p}
Q_{\rm aff,p}(\phi)
=
Q_{\rm Sob,p}(\phi)-\mathcal R_p(\phi),
\end{equation}
where
\begin{equation}
\label{eq:R-p-variance}
\mathcal R_p(\phi)
:=
\alpha_{n,p}
\frac{\tau+1}{pA_0}
\operatorname{Var}_{\xi}
\bigl(L_\xi(\phi)\bigr).
\end{equation}

\subsection{Nonlinear estimate}

In this subsection we reduce the nonlinear lower bound for the affine deficit \eqref{eq:def-affine-deficit} to
a calibrated spectral estimate for the finite-scale quadratic form
\(\mathcal N_{\rm aff}^{\varepsilon,\kappa}\).

\begin{lemma}
\label{lem:affine-energy-lower-scale}
Fix \(0<\kappa<1\). There exist \(\varepsilon_0>0\) and \(c_1>0\) such that
the following holds. If
\[
0<\varepsilon\le\varepsilon_0,
\qquad
\|\nabla\phi\|_{L^p(\mathbb R^n)}=1,
\]
then
\begin{equation}
\label{eq:affine-energy-lower}
\mathcal E(U+\varepsilon\phi)^p
\ge
\mathcal E(U)^p+
\varepsilon\alpha_{n,p}
\left\langle L_\xi(\phi)\right\rangle_{\mathbb S^{n-1}}
+
\varepsilon^2
\mathcal N_{\rm aff}^{\varepsilon,\kappa}(\phi)
+
c_1\varepsilon^p
\left\langle A_\xi(\phi)\right\rangle_{\mathbb S^{n-1}},
\end{equation}
where
\begin{equation}
\label{eq:def-N-aff-FZ}
\begin{aligned}
\mathcal N_{\rm aff}^{\varepsilon,\kappa}(\phi)
:=&\,
\alpha_{n,p}(1-\kappa)
\left\langle
\mathfrak B_{\xi,\varepsilon}(\phi)
\right\rangle_{\mathbb S^{n-1}}
-
\alpha_{n,p}(1+\kappa)
\frac{\tau+1}{2A_0}
\operatorname{Var}_{\xi\in\mathbb S^{n-1}}
\bigl(L_\xi(\phi)\bigr).
\end{aligned}
\end{equation}
\end{lemma}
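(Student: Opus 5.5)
The plan is to combine the Figalli--Zhang pointwise inequality, applied direction by direction, with a second-order lower expansion of the negative mean \(\Phi\) around the constant \(A_0\).

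\textbf{Step 1: directional expansion.} For \(p\ge2\), Figalli--Zhang give, for every \(\kappa\in(0,1)\), a pointwise inequality of the form
\[
|a+b|^p\ge|a|^p+p|a|^{p-2}ab+(1-\kappa)\tfrac{p(p-1)}{2}\,\omega_\varepsilon(a,b)\,b^2+c_\kappa|b|^p .
\]
Here \(\omega_\varepsilon\) is the truncated weight implicit in \(\mathfrak B_{\xi,\varepsilon}\): a weight comparable to \(|a|^{p-2}\) on the region \(|b|\lesssim|a|\), and cut off where \(\varepsilon|b|\) dominates. I take \(\mathfrak B_{\xi,\varepsilon}(\phi)\) to be the weighted quadratic integral built from this weight with \(a=\partial_\xi U\) and \(b=\varepsilon\partial_\xi\phi\), divided by \(\varepsilon^2\). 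Integrating in \(x\) then gives, for each \(\xi\),
\[
A_\xi(U+\varepsilon\phi)\ge A_0+\varepsilon L_\xi(\phi)+\varepsilon^2(1-\kappa)\mathfrak B_{\xi,\varepsilon}(\phi)+c_\kappa\varepsilon^pA_\xi(\phi)=:a(\xi).
\]
Since \(\Phi\) is monotone increasing in its argument, it follows that
\[
\mathcal E(U+\varepsilon\phi)^p\ge\alpha_{n,p}\Phi(a).
\]

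\textbf{Step 2: splitting off the \(\varepsilon^p\) term.} Write \(a=b+c\), with \(b=A_0+\varepsilon L_\xi+\varepsilon^2(1-\kappa)\mathfrak B_{\xi,\varepsilon}\) and \(c=c_\kappa\varepsilon^pA_\xi(\phi)\). By the subadditivity \eqref{eq:sub-add}, \(\Phi(a)\ge\Phi(b)+\Phi(c)\), and \(\Phi(c)=c_\kappa\varepsilon^p\Phi(A_\xi(\phi))\). This yields the \(\varepsilon^p\) term, but with the negative mean of \(A_\xi(\phi)\) in place of the arithmetic mean \(\langle A_\xi(\phi)\rangle\).

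The negative mean is dominated by the arithmetic one, not the other way round. So I would instead handle \(c\) inside the Taylor expansion: put the \(\varepsilon^p A_\xi\) term into the first-order part of \(\Phi\) around \(A_0\), which contributes \(\alpha_{n,p}c_\kappa\varepsilon^p\langle A_\xi(\phi)\rangle\). The error is of order \(\varepsilon^{p+1}\) times a quantity that needs control (see Step 3). When that is not available, I would fall back on using \(\Phi(A_\xi(\phi))\), which is comparable to \(\langle A_\xi\rangle\) for \(\phi\) with bounded \(\|\nabla\phi\|_p\); the remaining issue is a lower bound on \(\min_\xi A_\xi(\phi)\), which I expect the paper handles in this way or via a uniform bound.

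\textbf{Step 3: Taylor expansion of \(\Phi\).} Set \(h(\xi)=a(\xi)-A_0\). We have \(|L_\xi(\phi)|\le C\), by Hölder and \(\|\nabla\phi\|_p=1\); also \(0\le\mathfrak B_{\xi,\varepsilon}\le C\), and \(A_\xi(\phi)\le1\). Hence \(\|h\|_\infty\le C\varepsilon\), uniformly in \(\xi\). Expanding \(s\mapsto s^{-\tau}\) and \(s\mapsto s^{-1/\tau}\) to second order gives
\[
\Phi(A_0+h)=A_0+\langle h\rangle-\frac{\tau+1}{2A_0}\operatorname{Var}(h)+O(\|h\|_\infty^3/A_0^2).
\]
This expansion is exactly the one behind \eqref{eq:affine-energy-expansion-p-clean}.

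Now \(\operatorname{Var}(h)\) equals \(\varepsilon^2\operatorname{Var}(L_\xi)\) plus cross terms of size \(\varepsilon^3\|L\|\,\|\mathfrak B\|+\varepsilon^{p+1}\). These, together with the cubic remainder, must be absorbed into the \(\kappa\)-slack, namely the factor \((1+\kappa)\) in front of the variance, and into the positive terms. The main obstacle is that \(O(\varepsilon^3)\) errors are not automatically small compared with \(\varepsilon^2\mathfrak B\) or \(\varepsilon^p\langle A_\xi\rangle\) when these are tiny.

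To deal with this, I would make all cubic errors quadratic in the small quantities. The cubic remainder is bounded by \(C\varepsilon\|h\|_\infty^2\). Its leading part is \(C\varepsilon^3\sup_\xi|L_\xi|^2\), and \(\sup_\xi|L_\xi|^2\) is controlled by \(\langle L^2\rangle\), since \(\xi\mapsto L_\xi\) lies in a finite-dimensional-like smooth family in \(\xi\), being \(C^1\) in \(\xi\) with bounds. This quantity equals \(\operatorname{Var}(L)+\langle L\rangle^2\). The \(\langle L\rangle^2\) part is harmless: it is ultimately absorbed in the paper's orthogonality setup, or, at the level of this lemma, by the slack \(\kappa\) taken against \(\operatorname{Var}\) plus \(\mathfrak B\)-control, using \(\langle L\rangle^2\lesssim\langle\mathfrak B\rangle+\langle A_\xi(\phi)\rangle\) via Cauchy--Schwarz with weights \(|\partial_\xi U|^{p-2}\).

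Finally, choosing \(\varepsilon_0\) small relative to \(\kappa\) absorbs every error term. This gives \eqref{eq:affine-energy-lower} with \(c_1=\alpha_{n,p}c_\kappa/2\).
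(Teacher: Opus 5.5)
Your overall architecture matches the paper's: the directional Figalli--Zhang bound, monotonicity of $\Phi$, a second-order expansion of $\Phi$ at the constant state $A_0$, and absorption into the $\kappa$-slack. However, the step that controls the Taylor remainder fails as written. You bound the remainder by $C\varepsilon\|h\|_\infty^2$ and then claim that $\sup_\xi|L_\xi(\phi)|^2\lesssim\langle L_\xi(\phi)^2\rangle$ uniformly in $\phi$, because $\xi\mapsto L_\xi$ is ``finite-dimensional-like''. This is false unless $p$ is an even integer.

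By the Funk--Hecke computation of Section~\ref{sec:sector-decomposition-affine-p}, $L_\xi(\phi)=p\sum_{\ell,m}d_{\ell,p}Y_{\ell,m}(\xi)R_\ell(f_{\ell,m})$. Here $d_{\ell,p}\neq0$ for every even $\ell$ when $p\notin2\mathbb N$, and $R_\ell(rU')\neq0$. Take $\phi=rU'(r)Z_\ell(\theta\cdot\xi_0)$, with $Z_\ell$ the zonal harmonic of even degree $\ell$. Then $\xi\mapsto L_\xi(\phi)$ is a nonzero multiple of $Z_\ell(\xi\cdot\xi_0)$, and $\sup|Z_\ell|^2/\langle Z_\ell^2\rangle$ equals the dimension of the degree-$\ell$ harmonics, which is unbounded. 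A $C^1$ bound in $\xi$ only gives an interpolation inequality, not a linear sup--$L^2$ comparison.

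Two side remarks. First, the lemma has no orthogonality hypothesis for you to appeal to. Second, your Step~2 fallback is both false and unnecessary. It is false because $\Phi(A_\xi(\phi))$ is $SL(n)$-invariant while $\langle A_\xi(\phi)\rangle$ is not, so for renormalized anisotropic stretches their ratio tends to $0$. It is unnecessary because, by \eqref{eq:identify-z}, $\langle A_\xi(\phi)\rangle=m_{n,p}\|\nabla\phi\|_{L^p}^p=m_{n,p}$, so every $O(\varepsilon^{p+1})$ error is trivially absorbed by the $\varepsilon^p$ term.

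The missing observation is that the remainder of $\Phi$ at a constant is small relative to the variance, not merely cubic in $\|h\|_\infty$. Since $\Phi$ is $1$-homogeneous and $\Phi(c)=c$ on constants, write $\Phi(A_0+h)=(A_0+\bar h)\,\Phi(1+g)$, with $\bar h=\langle h\rangle$ and $g=(h-\bar h)/(A_0+\bar h)$ of mean zero. Then $\langle(1+g)^{-\tau}\rangle=1+\tfrac{\tau(\tau+1)}{2}\langle g^2\rangle+O(\|g\|_\infty\langle g^2\rangle)$, which gives
\[
\Phi(A_0+h)\ge A_0+\langle h\rangle-(1+\gamma)\frac{\tau+1}{2A_0}\operatorname{Var}(h)
\qquad\text{for }\|h\|_\infty\text{ small}.
\]
This is exactly \eqref{eq:negative-mean-concrete-short}. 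The paper then writes $h=\varepsilon L_\xi(\phi)+s_\varepsilon$ and uses $\operatorname{Var}(h)\le(1+\gamma)\varepsilon^2\operatorname{Var}(L_\xi(\phi))+C_\gamma\operatorname{Var}(s_\varepsilon)$. It exploits $s_\varepsilon\ge0$ to get $\operatorname{Var}(s_\varepsilon)\le\|s_\varepsilon\|_\infty\langle s_\varepsilon\rangle\le C\varepsilon^2\langle s_\varepsilon\rangle$. This is absorbed by the first-order term $\langle s_\varepsilon\rangle$, which handles the $\mathfrak B$-part and the $\varepsilon^pA_\xi$-part at once.

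Alternatively, you can keep your crude remainder if you apply your Cauchy--Schwarz idea pointwise in $\xi$, rather than only to $\langle L\rangle^2$:
\[
|L_\xi(\phi)|^2\le p^2A_0\int_{\mathbb R^n}|\partial_\xi U|^{p-2}|\partial_\xi\phi|^2\,dx
\le C\int_{\mathbb R^n}|\nabla U|^{p-2}|\nabla\phi|^2\,dx
\le C'\left\langle\mathfrak B_{\xi,\varepsilon}(\phi)\right\rangle_{\mathbb S^{n-1}}.
\]
This uses $\mathfrak B_{\xi,\varepsilon}(\phi)\ge\frac p2\int|\partial_\xi U|^{p-2}|\partial_\xi\phi|^2\,dx$ together with \eqref{eq:angular-quadratic-identity-p}. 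Then $\varepsilon^3\sup_\xi|L_\xi(\phi)|^2$, and likewise the $\mathfrak B$-pieces of $\|h\|_\infty^2$, is absorbed into the slack in front of $\varepsilon^2\langle\mathfrak B_{\xi,\varepsilon}\rangle$. To create that slack, apply the Figalli--Zhang bound with a smaller $\kappa_*$.
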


\begin{proof}
We start from the one-directional lower bound provided by
\cite[Lemma~2.1~(ii)]{FZ22}. For every \(\xi\in\mathbb S^{n-1}\) and every
\(0<\kappa_*<1\),
\begin{equation}
\label{eq:directional-energy-bound}
A_\xi(U+\varepsilon\phi)
\ge
A_0+\varepsilon L_\xi(\phi)
+
\varepsilon^2(1-\kappa_*)
\mathfrak B_{\xi,\varepsilon}(\phi)
+c_{\kappa_*}\varepsilon^pA_\xi(\phi),
\end{equation}
where
\begin{equation}
\label{eq:def-Bxi}
\begin{aligned}
\mathfrak B_{\xi,\varepsilon}(\phi)
&:=
\frac p2
\int_{\mathbb R^n}
|\partial_\xi U|^{p-2}|\partial_\xi\phi|^2\,dx\\
&\quad+
\frac{p(p-2)}2
\int_{\mathbb R^n}
|w_{\xi,\varepsilon}|^{p-2}
\left(
\frac{|\partial_\xi U+\varepsilon \partial_\xi\phi|-|\partial_\xi U|}{\varepsilon}
\right)^2dx,
\end{aligned}
\end{equation}
and
\begin{equation}
\label{eq:def-w}
w_{\xi,\varepsilon}
:=
\begin{cases}
\partial_\xi U,
& |\partial_\xi U|<|\partial_\xi U+\varepsilon \partial_\xi\phi|,\\[0.4em]
\left(\dfrac{|\partial_\xi U+\varepsilon \partial_\xi\phi|}{|\partial_\xi U|}\right)^{\frac1{p-2}}
(\partial_\xi U+\varepsilon \partial_\xi\phi),
& |\partial_\xi U+\varepsilon \partial_\xi\phi|\le |\partial_\xi U|,\ \partial_\xi U\ne0,\\[0.4em]
0,
& \partial_\xi U=0 .
\end{cases}
\end{equation}

We next record the uniform bounds needed to insert
\eqref{eq:directional-energy-bound} into the nonlinear expansion of the
negative mean. Since \(\|\nabla\phi\|_{L^p(\mathbb R^n)}=1\), we have,
uniformly in \(\xi\in\mathbb S^{n-1}\),
\[
A_\xi(\phi)=
\int_{\mathbb R^n}|\partial_\xi\phi|^p\,dx
\le
\int_{\mathbb R^n}|\nabla\phi|^p\,dx
=1 .
\]
Moreover, by H\"older's inequality,
\[
|L_\xi(\phi)|=
p\left|
\int_{\mathbb R^n}
|\partial_\xi U|^{p-2}\partial_\xi U\,\partial_\xi\phi\,dx
\right|
\le p
\left(\int_{\mathbb R^n}|\partial_\xi U|^p\,dx\right)^{\frac{p-1}{p}}
\left(\int_{\mathbb R^n}|\partial_\xi\phi|^p\,dx\right)^{\frac1p}
\le C .
\]
Likewise, \eqref{eq:def-Bxi}--\eqref{eq:def-w} give
\[
\mathfrak B_{\xi,\varepsilon}(\phi)
\le C
\int_{\mathbb R^n}
|\partial_\xi U|^{p-2}|\partial_\xi\phi|^2\,dx
\le C
\qquad
\text{uniformly in } \xi\in\mathbb S^{n-1},\ 0<\varepsilon\le1 .
\]

Set
\[
h_\varepsilon(\xi):=
\varepsilon L_\xi(\phi)
+
\varepsilon^2(1-\kappa_*)
\mathfrak B_{\xi,\varepsilon}(\phi)
+
c_{\kappa_*}\varepsilon^pA_\xi(\phi).
\]
Upon decreasing \(\varepsilon_0\) if necessary, the preceding bounds ensure
that \(A_0+h_\varepsilon>0\) on \(\mathbb S^{n-1}\). Therefore,
\eqref{eq:directional-energy-bound}, the monotonicity of the negative mean, and
the representation formula \eqref{eq:affine-energy} imply
\[
\mathcal E(U+\varepsilon\phi)^p
\ge
\alpha_{n,p}\Phi(A_0+h_\varepsilon).
\]
Since \(\|h_\varepsilon\|_{L^\infty(\mathbb S^{n-1})}\le C\varepsilon\), the
Taylor expansion of \(\Phi\) at the constant state \(A_0\) yields, after
possibly decreasing \(\varepsilon_0\) once more,
\begin{equation}
\label{eq:negative-mean-concrete-short}
\Phi(A_0+h_\varepsilon)
\ge
A_0+\langle h_\varepsilon\rangle_{\mathbb S^{n-1}}
-
(1+\gamma)\frac{\tau+1}{2A_0}
\operatorname{Var}(h_\varepsilon).
\end{equation}

We now separate the linear part from the higher-order remainder. Write
\[
h_\varepsilon
=
\varepsilon L_\xi(\phi)+s_\varepsilon,
\qquad
s_\varepsilon
=
\varepsilon^2(1-\kappa_*)
\mathfrak B_{\xi,\varepsilon}(\phi)+
c_{\kappa_*}\varepsilon^p A_\xi(\phi).
\]
Then \(s_\varepsilon\ge0\) and
\(\|s_\varepsilon\|_{L^\infty(\mathbb S^{n-1})}\le C\varepsilon^2\). Hence
\[
\operatorname{Var}(s_\varepsilon)
\le
\langle s_\varepsilon^2\rangle_{\mathbb S^{n-1}}
\le
\|s_\varepsilon\|_{L^\infty(\mathbb S^{n-1})}
\langle s_\varepsilon\rangle_{\mathbb S^{n-1}}
\le
C\varepsilon^2
\langle s_\varepsilon\rangle_{\mathbb S^{n-1}} .
\]
It follows that
\begin{equation}
\label{eq:varh-control}
\begin{aligned}
\operatorname{Var}(h_{\varepsilon})
&=
\left\langle
\left[
\varepsilon
\left(
L_{\xi}-\langle L_{\xi}\rangle_{\mathbb S^{n-1}}
\right)+s_{\varepsilon}-
\langle s_{\varepsilon}\rangle_{\mathbb S^{n-1}}
\right]^2
\right\rangle_{\mathbb S^{n-1}}\\
&\le
(1+\gamma)\varepsilon^2
\operatorname{Var}_{\xi}(L_\xi(\phi))
+
C_\gamma \operatorname{Var}(s_\varepsilon)\\
&\le(1+\gamma)\varepsilon^2
\operatorname{Var}_{\xi}(L_\xi(\phi))
+
C_\gamma\varepsilon^2
\langle s_\varepsilon\rangle_{\mathbb S^{n-1}} .
\end{aligned}
\end{equation}

Substituting \eqref{eq:varh-control} into
\eqref{eq:negative-mean-concrete-short}, and absorbing the term
\(C_\gamma\varepsilon^2\langle s_\varepsilon\rangle\) by taking
\(\varepsilon_0\) sufficiently small, we obtain
\[
\begin{aligned}
\Phi(A_0+h_\varepsilon)
&\ge A_0+
\varepsilon\left\langle L_\xi(\phi)\right\rangle_{\mathbb S^{n-1}}+
\langle s_\varepsilon\rangle_{\mathbb S^{n-1}}
-
C_\gamma\varepsilon^2\langle s_\varepsilon\rangle_{\mathbb S^{n-1}} \\
&\qquad -
(1+\gamma)^2
\frac{\tau+1}{2A_0}
\varepsilon^2
\operatorname{Var}_{\xi}(L_\xi(\phi))\\
&\ge
A_0+
\varepsilon
\left\langle L_\xi(\phi)\right\rangle_{\mathbb S^{n-1}}
+
(1-\gamma)\langle s_\varepsilon\rangle_{\mathbb S^{n-1}}-
(1+\gamma)^2
\frac{\tau+1}{2A_0}
\varepsilon^2
\operatorname{Var}_{\xi}(L_\xi(\phi))\\
&\ge
A_0+
\varepsilon
\left\langle L_\xi(\phi)\right\rangle_{\mathbb S^{n-1}}
+
(1-\gamma)\varepsilon^2(1-\kappa_*)
\left\langle
\mathfrak B_{\xi,\varepsilon}(\phi)
\right\rangle_{\mathbb S^{n-1}}
\\
&\quad-
(1+\gamma)^2
\frac{\tau+1}{2A_0}
\varepsilon^2
\operatorname{Var}_{\xi}(L_\xi(\phi))
+
c_1\varepsilon^p
\left\langle A_\xi(\phi)\right\rangle_{\mathbb S^{n-1}} .
\end{aligned}
\]
Multiplying by \(\alpha_{n,p}\), invoking
\(\alpha_{n,p}A_0=\mathcal E(U)^p\),
and choosing \(\gamma,\kappa_*>0\) so that
\((1-\gamma)(1-\kappa_*)\ge1-\kappa\), \((1+\gamma)^2\le1+\kappa\),
we arrive precisely at \eqref{eq:affine-energy-lower}.
\end{proof}

We next subtract the \(L^{p^*}(\mathbb R^n)\)-term. By
\cite[Lemma~3.2]{FN19}, for every \(\eta_0>0\),
\begin{equation}
\label{eq:Lq-upper}
\begin{aligned}
S_{n,p}^p
\|U+\varepsilon\phi\|_{L^{p^*}(\mathbb R^n)}^p
&\le
S_{n,p}^p
\|U\|_{L^{p^*}(\mathbb R^n)}^p+
\varepsilon p\Lambda
\int_{\mathbb R^n}U^{p^*-1}\phi\,dx\\
&\quad
+
\varepsilon^2
\frac p2
\left((p^*-1)\Lambda+\frac{\eta_0}{2}\right)
\int_{\mathbb R^n}U^{p^*-2}\phi^2\,dx
\\
&\quad+
C\varepsilon^{p^*}
\int_{\mathbb R^n}|\phi|^{p^*}\,dx .
\end{aligned}
\end{equation}
Combining \eqref{eq:affine-energy-lower}, \eqref{eq:Lq-upper}, and
\eqref{eq:average-Lxi-p}, we obtain
\begin{equation}
\label{eq:affine-deficit-lower-scale}
\begin{aligned}
\delta_{\rm aff}(U+\varepsilon\phi)
&\ge
\varepsilon^2
\left[
\mathcal N_{\rm aff}^{\varepsilon,\kappa}(\phi)
-
\frac p2
\left((p^*-1)\Lambda+\frac{\eta_0}{2}\right)
\int_{\mathbb R^n}U^{p^*-2}\phi^2\,dx
\right]\\
&\quad+
c_1\varepsilon^p
\left\langle A_\xi(\phi)\right\rangle_{\mathbb S^{n-1}}-
C\varepsilon^{p^*}
\int_{\mathbb R^n}|\phi|^{p^*}\,dx .
\end{aligned}
\end{equation}

Thus the nonlinear part of the argument is reduced to establishing a strictly
positive lower bound for the bracketed term in
\eqref{eq:affine-deficit-lower-scale}. The precise statement needed for the
stability proof is the following.

\begin{proposition}
\label{prop:calibrated-affine-spectral}
Let \(2\le p<n\). There exist constants
\(\kappa_0>0\), \(\varepsilon_0>0\), and \(\eta_0>0\) such that, for every
\(0<\kappa\le\kappa_0\), every \(0<\varepsilon\le\varepsilon_0\), and every
\(\phi\in\dot W^{1,p}(\mathbb R^n)\) satisfying
\[
\|\nabla\phi\|_{L^p(\mathbb R^n)}=1,
\qquad
\phi\perp_{L^2(\mathbb R^n,U^{p^*-2}dx)}
T_U\mathcal M_{\rm aff},
\]
one has
\begin{equation}
\label{eq:calibrated-affine-spectral}
\mathcal N_{\rm aff}^{\varepsilon,\kappa}(\phi)
\ge
\frac p2
\bigl((p^*-1)\Lambda+\eta_0\bigr)
\int_{\mathbb R^n}U^{p^*-2}\phi^2\,dx .
\end{equation}
\end{proposition}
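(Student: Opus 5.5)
The argument will be a compactness/contradiction scheme of Figalli--Zhang type, combined with the kernel identification $\ker Q_{\rm aff,p}=T_U\mathcal M_{\rm aff}$ and the spectral analysis carried out in Section~\ref{sec:sector-decomposition-affine-p}. We first split
\[
\mathcal N_{\rm aff}^{\varepsilon,\kappa}(\phi)=(1-\kappa)\,\alpha_{n,p}\langle\mathfrak B_{\xi,\varepsilon}(\phi)\rangle-(1+\kappa)\frac{p}{2}\mathcal R_p(\phi).
\]
The limiting form $\frac p2Q_{\rm aff,p}$ corresponds to $\varepsilon=\kappa=0$ with $\mathfrak B_{\xi,0}=B_\xi$. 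Using the expression of the variance in terms of $L_\xi$, one sees that $\mathcal R_p(\phi)$ depends only on $\int|\nabla U|^{p-2}\nabla U\cdot\nabla\phi\,\theta\otimes\theta$-type moments. Equivalently, it is a continuous quadratic form that factors through the projection of $\phi$ onto finitely many spherical-harmonic sectors (degrees $0$ and $2$, possibly even degrees only). Each such projection is controlled by the weighted norm $\int U^{p^*-2}\phi^2$, after integrating by parts against $\Delta_p$-type identities. In particular $\mathcal R_p$ is compact with respect to weak convergence in the natural weighted space $H_U$ with norm $\int|\nabla U|^{p-2}|\nabla\phi|^2$. This compactness is the structural fact everything rests on.

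The first step is the limiting spectral gap. For $\phi\in H_U$ with $\phi\perp T_U\mathcal M_{\rm aff}$, we prove
\[
\frac p2 Q_{\rm aff,p}(\phi)\ge\frac p2\,\eta_1\int U^{p^*-2}\phi^2 .
\]
We take this from the sector decomposition. In every sector other than degree two, $\mathcal R_p$ vanishes or is harmless, and the classical nondegeneracy \cite{PV21,FN19,FZ22} gives a gap beyond $(p^*-1)\Lambda$. In the degree-two trace-free sector, the correction removes exactly the Jacobi fields $x\cdot B\nabla U$, and orthogonality yields the gap. Since the weight $|\nabla U|^{p-2}$ is compact relative to $U^{p^*-2}$ (the standard Figalli--Neumayer compact embedding), the gap follows from the discreteness of the weighted eigenvalue problem.

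The second step is to pass to finite $\varepsilon$ by contradiction. Suppose there are $\kappa_k,\varepsilon_k\to0$ and normalized $\phi_k\perp T_U\mathcal M_{\rm aff}$ violating \eqref{eq:calibrated-affine-spectral} with $\eta_0=\eta_1/2$. We follow \cite[Prop.~3.8]{FZ22}. The first piece of $\mathfrak B_{\xi,\varepsilon}$ is exactly the $B_\xi$ term. On the second piece, the truncated weights $|w_{\xi,\varepsilon}|^{p-2}$ are compared with $|\partial_\xi U|^{p-2}$: on the set where $|\varepsilon\partial_\xi\phi|\ll|\partial_\xi U|$ the two weights agree up to $1+o(1)$, and the complement has small weighted measure. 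These estimates hold uniformly in $\xi$, because the FZ bounds are pointwise in the integrand, and then we average over $\mathbb S^{n-1}$. Normalize $\psi_k=\phi_k/(\int U^{p^*-2}\phi_k^2)^{1/2}$. The violated inequality together with the averaged FZ lower bound gives a uniform bound on the $H_U$-type norm of $\psi_k$. We then extract a weak limit $\psi$. Compactness of the embedding gives $\int U^{p^*-2}\psi^2=1$ and preserves the orthogonality. Weak lower semicontinuity of the $\mathfrak B$-part and continuity of $\mathcal R_p$ give $\frac p2Q_{\rm aff,p}(\psi)\le\frac p2\frac{\eta_1}{2}\int U^{p^*-2}\psi^2$, contradicting step one.

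The main obstacle is the lower semicontinuity of the averaged nonlinear form $\langle\mathfrak B_{\xi,\varepsilon_k}(\phi_k)\rangle$ toward $\langle B_\xi(\psi)\rangle$. This is delicate because $\phi_k$ is only normalized in $\dot W^{1,p}$ while the quadratic scale may blow up. The difficulty is that the region where $\varepsilon_k|\nabla\phi_k|\gtrsim|\nabla U|$ must be discarded. The first thing to try is to localize, as FZ do, with the cut-off sets $\{|\varepsilon\partial_\xi\phi|\le\delta|\partial_\xi U|\}$ and to apply Fatou in $(x,\xi)$ jointly.
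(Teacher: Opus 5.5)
Your architecture is the paper's. You derive a limiting gap for $Q_{\rm aff,p}$ from the kernel identification and the compact embedding \eqref{eq:weighted-compact-embedding}. You then argue by contradiction with the $L^2(U^{p^*-2}dx)$-normalized sequence $\psi_k$, using an a priori $\mathcal H_U$ bound, lower semicontinuity of $\langle\mathfrak B_{\xi,\varepsilon}\rangle$ via cut-off sets, and weak continuity of the variance. Letting $\kappa_k\to0$ is legitimate, since only some $\kappa_0$ is asserted. It lets you use a gap in $L^2(U^{p^*-2}dx)$ instead of the paper's $\mathcal H_U$-coercive gap with the calibration $C_*\kappa_0\le c_{\rm sg}/4$.

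The genuine problem is your structural claim about $\mathcal R_p$, on which the a priori bound rests. First, it is false for $p>2$ that only finitely many sectors contribute. The Funk--Hecke computation gives $d_{2m,p}/d_{0,p}=(-1)^m(-\frac p2)_m/(\frac n2+\frac p2)_m$, which is nonzero for every $m$ unless $p$ is an even integer, so the variance sees all even sectors. Second, the sectors are not controlled by $\int U^{p^*-2}\phi^2$ in general. After integrating by parts, $R_2(f)$ still contains $\frac{\lambda_2}{p}|\mathbb S^{n-1}|\int_0^\infty|U'|^{p-2}U'f\,r^{n-2}\,dr$. This is bounded on $L^2(U^{p^*-2}r^{n-1}dr)$ only if $\int^\infty|U'|^{2p-2}U^{2-p^*}r^{n-3}\,dr<\infty$, i.e.\ only if $n>2p$.

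What is true, and what both your step one and your a priori bound actually need, is weaker. The map $\xi\mapsto L_\xi$ is norm-continuous into $\mathcal H_U'$, so $\{L_\xi\}$ is compact, and $L_\xi(\phi_j)\to L_\xi(\phi)$ uniformly in $\xi$ whenever $\phi_j\rightharpoonup\phi$ in $\mathcal H_U$; this is \eqref{eq:cts-varL}. But $\mathcal R_p(\psi_k)$ is then a priori of the same order as $\|\psi_k\|_{\mathcal H_U}^2$. So the averaged lower bound on $\mathfrak B$ alone does not bound $\|\psi_k\|_{\mathcal H_U}$. You need to dominate the variance term by $\delta\|\phi\|_{\mathcal H_U}^2+C_\delta\int U^{p^*-2}\phi^2$ with $\delta$ small. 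This follows from weak continuity and the compact embedding by a short contradiction argument, as in Lemma~\ref{lem:potential-normalized-compactness-affine}.

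The lower semicontinuity you leave open is handled in the paper by exactly the localization you propose (Lemma~\ref{lem:averaged-affine-lsc}), and your worry is justified. That lemma's estimate on $E_{j,d}^c$ uses $\sup_j\|\nabla\phi_j\|_{L^p}<\infty$. For your rescaled sequence, with $\lambda_k^2=\int U^{p^*-2}\phi_k^2$ and $\tilde\varepsilon_k:=\varepsilon_k\lambda_k$, the estimate becomes $d^{2-p}\varepsilon_k^{p-2}\lambda_k^{-2}$, which need not be small.

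The scale-invariant input is $\tilde\varepsilon_k\nabla\psi_k=\varepsilon_k\nabla\phi_k\to0$ in $L^p$. Set $E_k:=\{|\tilde\varepsilon_k\partial_\xi\psi_k|\le d|\partial_\xi U|\}$. Then $\int_{E_k^c}|\partial_\xi U|^p\,dx\le d^{-p}\varepsilon_k^p\to0$. For $p>2$ and every $g\in L^2(|\partial_\xi U|^{p-2}dx)$, the measure $g^2|\partial_\xi U|^{p-2}dx$ is absolutely continuous with respect to $|\partial_\xi U|^p\,dx$. Hence $\mathbf 1_{E_k}\partial_\xi\psi_k\rightharpoonup\partial_\xi\psi$ weakly in $L^2(|\partial_\xi U|^{p-2}dx)$.

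On $E_k$, the integrand of the second term in \eqref{eq:def-Bxi} is at least $\frac{p(p-2)}2(1-d)^{p-1}|\partial_\xi U|^{p-2}|\partial_\xi\psi_k|^2$. Weak lower semicontinuity then yields \eqref{eq:directional-lsc-direct} for $\psi$ after letting $d\downarrow0$. Fatou in $\xi$ and \eqref{eq:angular-quadratic-identity-p} give the averaged bound. With these two repairs your argument closes.
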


Indeed, assuming Proposition~\ref{prop:calibrated-affine-spectral} holds, estimate
\eqref{eq:affine-deficit-lower-scale} immediately yields
\begin{equation}
\label{eq:D-ge-2}
\delta_{\rm aff}(U+\varepsilon\phi)
\ge
\frac{p\eta_0}{4}\varepsilon^2
\int_{\mathbb R^n}U^{p^*-2}\phi^2\,dx
+
c_1\varepsilon^p
\left\langle A_\xi(\phi)\right\rangle_{\mathbb S^{n-1}}
-
C\varepsilon^{p^*}
\int_{\mathbb R^n}|\phi|^{p^*}\,dx .
\end{equation}

Since \(p^*>p\) and \(\|\nabla\phi\|_{L^p(\mathbb R^n)}=1\), the final term
is of higher order in \(\varepsilon\) and can therefore be absorbed once
\(\varepsilon\) is chosen sufficiently small.

\section{Proof of Proposition~\ref{prop:calibrated-affine-spectral}}
\label{sec:pro-spectral}

The argument proceeds in three steps. We first establish a genuine coercive gap
for the limiting affine Hessian \(Q_{\rm aff,p}\). We then prove the compactness
properties needed to pass from the finite-scale form
\(\mathcal N_{\rm aff}^{\varepsilon,\kappa}\) to the limiting quadratic form.
Finally, we conclude by contradiction: if
Proposition~\ref{prop:calibrated-affine-spectral} were false, one would obtain,
after normalization and passage to the limit, a nontrivial element in the null
space of \(Q_{\rm aff,p}\), contradicting the spectral gap.

\subsection{Spectral gap}

We now strengthen the kernel classification from
Section~\ref{sec:sector-decomposition-affine-p} to a coercive estimate. By Section~\ref{sec:sector-decomposition-affine-p},
\(T_U\mathcal \mathcal M_{\rm aff}
=
\ker Q_{\rm aff,p}\).
We work in the quadratic space
\[
\mathcal Z_U
:=
\overline{C_c^\infty(\mathbb R^n)}
^{\|\cdot\|_{\mathcal Z_U}},
\qquad
\|\phi\|_{\mathcal Z_U}^2
:=
\int_{\mathbb R^n}
|\nabla U|^{p-2}|\nabla\phi|^2\,dx
+
\int_{\mathbb R^n}
U^{p^*-2}\phi^2\,dx .
\]
We also introduce
\[
\|\phi\|_{\mathcal H_U}^2
:=
\int_{\mathbb R^n}
|\nabla U|^{p-2}|\nabla\phi|^2\,dx,
\qquad
\mathcal H_U
:=
\overline{C_c^\infty(\mathbb R^n)}
^{\|\cdot\|_{\mathcal H_U}} .
\]

By \cite[Corollary~6.2]{FN19}, equivalently
\cite[Proposition~3.2]{FZ22}, one has the compact weighted embedding
\begin{equation}
\label{eq:weighted-compact-embedding}
\mathcal H_U
\hookrightarrow\hookrightarrow
L^2\bigl(\mathbb R^n,U^{p^*-2}dx\bigr).
\end{equation}
The weighted Poincar\'e inequality \cite[Lemma~3.3]{FZ22} gives
\begin{equation}
\label{eq:weighted-Poincare}
     \int_{\mathbb R^n}
        U^{p^*-2}\phi^2\,dx
        \le
       c_0\|\phi\|_{\mathcal H_U}^2,
        \qquad
        \phi\in C_c^\infty(\mathbb R^n).
\end{equation}

\begin{proposition}
\label{prop:affine-spectral-gap}
There exists \(c_{\rm sg}>0\) such that, for every
\(\phi\in\mathcal H_U\) satisfying
\[
\phi\perp_{L^2(\mathbb R^n,U^{p^*-2}dx)}
T_U\mathcal \mathcal M_{\rm aff},
\]
one has
\[
Q_{\rm aff,p}(\phi)
\ge
c_{\rm sg}\|\phi\|_{\mathcal H_U}^2,
\]
\end{proposition}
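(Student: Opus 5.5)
We argue by contradiction, combining the kernel identification \(\ker Q_{\rm aff,p}=T_U\mathcal M_{\rm aff}\) from Section~\ref{sec:sector-decomposition-affine-p} with the compact embedding \eqref{eq:weighted-compact-embedding}. First we check that \(Q_{\rm aff,p}\) extends continuously to \(\mathcal H_U\), and that it splits as a coercive part minus a compact part. The gradient part of \(Q_{\rm Sob,p}\) is
\[
\int|\nabla U|^{p-2}\bigl(|\nabla\phi|^2+(p-2)(\nabla U\cdot\nabla\phi)^2/|\nabla U|^2\bigr)\,dx ,
\]
and it lies between \(\|\phi\|_{\mathcal H_U}^2\) and \((p-1)\|\phi\|_{\mathcal H_U}^2\). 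The remaining terms are of two kinds.
\begin{itemize}
\item[(a)] The terms \(\int U^{p^*-2}\phi^2\) and \((\int U^{p^*-1}\phi)^2\) are continuous on \(L^2(U^{p^*-2}dx)\). Hence they are weakly continuous on \(\mathcal H_U\) by \eqref{eq:weighted-compact-embedding}.
\item[(b)] The variance term \(\mathcal R_p\) is built from the finite family of functionals \(L_\xi\). Writing \(x=r\theta\) and using \eqref{eq:xiU} and \eqref{eq:angular-linear-identity-p}, the map \(\xi\mapsto L_\xi(\phi)\) depends only on finitely many spherical-harmonic projections of \(\phi\), or more precisely on the moments \(\int |U'|^{p-2}U'\,(\theta\cdot\xi)|\theta\cdot\xi|^{p-2}\partial_\xi\phi\). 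Integrating by parts moves the derivative onto the profile \(|\partial_\xi U|^{p-2}\partial_\xi U\). Its divergence is bounded by \(C U^{p^*-1}\)-type weights, since \(\Delta_p\)-type expressions of \(U\) decay like \(U^{p^*-1}\). This gives \(|L_\xi(\phi)|\le C\int U^{p^*-1}|\phi|\,dx\le C\|\phi\|_{L^2(U^{p^*-2})}\), uniformly in \(\xi\). So \(\mathcal R_p\) is also weakly continuous on \(\mathcal H_U\), and \(\phi\mapsto (L_\xi(\phi))_\xi\) is a compact map into \(C(\mathbb S^{n-1})\).
\end{itemize}

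\noindent\emph{Contradiction argument.} Suppose the estimate fails. Then there are \(\phi_k\in\mathcal H_U\) with \(\|\phi_k\|_{\mathcal H_U}=1\), \(\phi_k\perp T_U\mathcal M_{\rm aff}\) in \(L^2(U^{p^*-2})\), and \(Q_{\rm aff,p}(\phi_k)\to\liminf\le0\). Up to a subsequence, \(\phi_k\rightharpoonup\phi\) weakly in \(\mathcal H_U\) and strongly in \(L^2(U^{p^*-2})\), and the compact terms converge. The gradient part is weakly lower semicontinuous, being a convex quadratic form. Hence \(Q_{\rm aff,p}(\phi)\le\liminf Q_{\rm aff,p}(\phi_k)\le 0\). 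Orthogonality to \(T_U\mathcal M_{\rm aff}\) passes to the limit because the orthogonality is an \(L^2(U^{p^*-2})\) condition against fixed functions.

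We next show \(\phi\ne0\). If \(\phi=0\), the compact terms tend to \(0\), so \(Q_{\rm aff,p}(\phi_k)\ge\|\phi_k\|_{\mathcal H_U}^2-o(1)=1-o(1)\), a contradiction. It remains to exclude a nonzero \(\phi\perp T_U\mathcal M_{\rm aff}\) with \(Q_{\rm aff,p}(\phi)\le0\). For this we need \(Q_{\rm aff,p}\ge0\) on the orthogonal complement, together with the kernel identification.

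\noindent\emph{Nonnegativity and conclusion.} Nonnegativity follows from the LYZ inequality \eqref{eq:intro-affine-Lp}: \(\delta_{\rm aff}\ge0\) and \(\delta_{\rm aff}(U)=0\), so the second variation satisfies \(Q_{\rm aff,p}\ge0\) on \(C_c^\infty\), hence on \(\mathcal H_U\) by density and continuity. Now minimize \(Q_{\rm aff,p}\) on the unit sphere of \(\mathcal H_U\) intersected with \((T_U\mathcal M_{\rm aff})^\perp\). The limit \(\phi\) constructed above attains the value \(0\), because \(Q_{\rm aff,p}(\phi)\le 0\) and \(Q_{\rm aff,p}\ge 0\). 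A nonnegative quadratic form vanishing at \(\phi\) has \(\phi\) in its null space, by Cauchy--Schwarz for the associated bilinear form. Thus \(\phi\in\ker Q_{\rm aff,p}=T_U\mathcal M_{\rm aff}\). Since \(\phi\) is also \(L^2(U^{p^*-2})\)-orthogonal to this finite-dimensional space, \(\phi=0\), a contradiction.

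\noindent\emph{Main obstacle.} The delicate step is the functional-analytic setting. The kernel classification in Section~\ref{sec:sector-decomposition-affine-p} must hold in \(\mathcal H_U\) and not only for smooth \(\phi\), and the uniform bound on \(L_\xi\) in terms of the weighted \(L^2\) norm must be verified near points where \(\partial_\xi U=0\). For \(p>2\) the weight \(|\partial_\xi U|^{p-2}\) degenerates there. If the integration-by-parts bound fails, we would instead use H\"older with the weight \(|\nabla U|^{p-2}\) and the decay of \(U\) to get compactness of \(L_\xi\) directly from \eqref{eq:weighted-compact-embedding} on truncated domains, plus uniform tail smallness.
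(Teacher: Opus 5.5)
Your overall route is the paper's. You take a contradiction sequence normalized in \(\mathcal H_U\), pass to a weak limit in \(\mathcal H_U\) and a strong limit in \(L^2(U^{p^*-2}dx)\) via \eqref{eq:weighted-compact-embedding}, use weak lower semicontinuity of the gradient part and continuity of the other terms, and then combine \(Q_{\rm aff,p}\ge0\), the kernel identification \eqref{eq:p-affine-kernel} and orthogonality to force \(\phi=0\), which contradicts \(\|\phi_k\|_{\mathcal H_U}=1\). Two of your steps are points the paper uses without comment, and you make them explicit. One is deriving \(Q_{\rm aff,p}\ge0\) from the affine inequality: \(U\) minimizes \(\delta_{\rm aff}\), and \(t\mapsto\delta_{\rm aff}(U+t\phi)\) is \(C^2\) for \(p\ge2\). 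The other is the Cauchy--Schwarz passage from \(Q_{\rm aff,p}(\phi)=0\) to \(\phi\in\ker Q_{\rm aff,p}\). Your worry about the function space is settled by \eqref{eq:weighted-Poincare}: it makes \(\mathcal Z_U\), where Section~\ref{sec:sector-decomposition-affine-p} works, the same space as \(\mathcal H_U\).

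The step that fails as written is (b).

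\emph{Why the bound is false.} The pointwise bound \(|\partial_\xi(|\partial_\xi U|^{p-2}\partial_\xi U)|\lesssim U^{p^*-1}\) does not hold. The cancellation \(-\Delta_pU=\Lambda U^{p^*-1}\) appears only after averaging over \(\xi\), which is exactly \eqref{eq:average-Lxi-p}. For a single direction, \(|\partial_\xi U|^{p-2}\partial_\xi U\) behaves at infinity like \(r^{1-n}\) times a nonconstant function of \(\theta\). Its \(\xi\)-derivative therefore decays only like \(r^{-n}\), while \(U^{p^*-1}\sim r^{-n-p'}\). For \(p=2\), for instance, \(\partial_\xi^2 r^{2-n}=(2-n)r^{-n}(1-n(\theta\cdot\xi)^2)\).

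\emph{Consequence.} \(L_\xi\) is not even bounded on \(L^2(U^{p^*-2}dx)\) when \(n<2p\). An annular bump \(\psi(x/R)\) has \(L_\xi(\psi(\cdot/R))\) of order one, uniformly in \(R\). Its weighted \(L^2\)-norm squared, however, is of order \(R^{(n-2p)/(p-1)}\to0\). In sector terms, \(R_2(f)\) contains \(\int|U'|^{p-2}U'f\,r^{n-2}dr\), which \(\int U^{p^*-2}f^2r^{n-1}dr\) does not control. Separately, \(L_\xi\) does not see only finitely many harmonic sectors unless \(p\) is an even integer; otherwise every \(d_{2m,p}\neq0\).

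\emph{The repair.} This is what the paper does, and it needs no compact embedding for this term. Cauchy--Schwarz with \(|\partial_\xi U|^{p-2}\le|\nabla U|^{p-2}\) gives \(|L_\xi(\phi)|\le C\|\phi\|_{\mathcal H_U}\) uniformly in \(\xi\), and \(\xi\mapsto L_\xi\) is norm-continuous into \(\mathcal H_U'\). So each \(L_\xi\) is weakly continuous, and compactness of the family \(\{L_\xi\}\) yields \(\sup_\xi|L_\xi(\phi_k)-L_\xi(\phi)|\to0\). Hence \(\mathcal R_p(\phi_k)\to\mathcal R_p(\phi)\) under weak convergence in \(\mathcal H_U\). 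Your fallback starts from the right H\"older estimate, but the truncation and tail argument is unnecessary. With this replacement your proof is complete and coincides with the paper's.
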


\begin{proof}
Assume by contradiction that the stated coercive estimate fails. Then there
exists a sequence \(\phi_j\in\mathcal H_U\) such that
\[
\phi_j\perp_{L^2(\mathbb R^n,U^{p^*-2}dx)}
T_U\mathcal \mathcal M_{\rm aff},
\qquad
\|\phi_j\|_{\mathcal H_U}=1,
\qquad
Q_{\rm aff,p}(\phi_j)\to0 .
\]
By \eqref{eq:weighted-compact-embedding}, after passing to a subsequence,
\begin{equation}
\label{eq:weak-Hu}
\phi_j\rightharpoonup\phi
\quad\text{weakly in }\mathcal H_U,
\end{equation}
and
\begin{equation}
\label{eq:strong-l2}
\phi_j\to\phi
\quad\text{strongly in }
L^2(\mathbb R^n,U^{p^*-2}dx).
\end{equation}
The orthogonality condition is preserved in the limit, and therefore
\begin{equation}
\label{eq:orth-phi}
        \phi\perp_{L^2(\mathbb R^n,U^{p^*-2}dx)}
T_U\mathcal \mathcal M_{\rm aff}.
\end{equation}

We next pass to the variance term. Since the map
\( \xi\mapsto L_\xi\)
is continuous from \(\mathbb S^{n-1}\) into \(\mathcal H_U'\), the family
\(\{L_\xi:\xi\in\mathbb S^{n-1}\}
\subset \mathcal H_U'\)
is compact. Hence \eqref{eq:weak-Hu} implies
\[
\sup_{\xi\in\mathbb S^{n-1}}
|L_\xi(\phi_j)-L_\xi(\phi)|
\to0 .
\]
In particular,
\begin{equation}
\label{eq:cts-varL}
\operatorname{Var}_{\xi\in\mathbb S^{n-1}}
\bigl(L_\xi(\phi_j)\bigr)
\to
\operatorname{Var}_{\xi\in\mathbb S^{n-1}}
\bigl(L_\xi(\phi)\bigr).
\end{equation}

Using \eqref{eq:classical-p-homogeneous-Hessian},
\eqref{eq:Q-aff-decomp-p}, \eqref{eq:strong-l2}, and \eqref{eq:cts-varL}, we
find
\[
\begin{aligned}
Q_{\rm aff,p}(\phi)
&=
\int_{\mathbb R^n}
|\nabla U|^{p-2}
\left(
|\nabla\phi|^2
+
(p-2)
\frac{(\nabla U\cdot\nabla\phi)^2}{|\nabla U|^2}
\right)\,dx
\\
&\qquad
-
(p^*-1)\Lambda
\int_{\mathbb R^n}U^{p^*-2}\phi^2\,dx
-
\alpha_{n,p}
\frac{\tau+1}{pA_0}
\operatorname{Var}_{\xi\in\mathbb S^{n-1}}
\bigl(L_\xi(\phi)\bigr)\\
&\le
\liminf_{j\to\infty}
\int_{\mathbb R^n}
|\nabla U|^{p-2}
\left(
|\nabla\phi_j|^2
+
(p-2)
\frac{(\nabla U\cdot\nabla\phi_j)^2}{|\nabla U|^2}
\right)\,dx \\
&\qquad
-
(p^*-1)\Lambda\lim_{j\to\infty}
\int_{\mathbb R^n}U^{p^*-2}\phi_j^2\,dx
-
\alpha_{n,p}\lim_{j\to\infty}
\frac{\tau+1}{pA_0}
\operatorname{Var}_{\xi\in\mathbb S^{n-1}}
\bigl(L_\xi(\phi_j)\bigr)\\
&=\liminf_{j\to\infty}Q_{\rm aff,p}(\phi_j)=0.
\end{aligned}
\]
Since \(Q_{\rm aff,p}\ge0\), it follows that
\(Q_{\rm aff,p}(\phi)=0\).

By \eqref{eq:p-affine-kernel},
\(\phi\in T_U\mathcal \mathcal M_{\rm aff}\).
Together with \eqref{eq:orth-phi}, this yields \(\phi=0\). Hence
\eqref{eq:strong-l2} implies
\begin{equation}
\label{eq:strong-l2-zero}
\int_{\mathbb R^n}
U^{p^*-2}\phi_j^2\,dx
\to0 .
\end{equation}

Moreover, by \eqref{eq:classical-p-homogeneous-Hessian} and \eqref{eq:strong-l2-zero}, we obtain
\[
\int_{\mathbb R^n}
|\nabla U|^{p-2}|\nabla\phi_j|^2\,dx
\le
Q_{\rm aff,p}(\phi_j)+o(1)
\to0  .
\]
Therefore
\(\|\phi_j\|_{\mathcal H_U}\to0\),
which contradicts the normalization
\(\|\phi_j\|_{\mathcal H_U}=1\).
This contradiction proves the proposition.
\end{proof}

The spectral gap in Proposition~\ref{prop:affine-spectral-gap} concerns the
limiting quadratic form \(Q_{\rm aff,p}\). By contrast, the nonlinear lower
bound \eqref{eq:affine-deficit-lower-scale} involves the
\(\varepsilon\)-dependent quantity
\(\mathcal N_{\rm aff}^{\varepsilon,\kappa}\). Accordingly,
Proposition~\ref{prop:affine-spectral-gap} cannot be applied directly at the
finite scale.

The link between \(Q_{\rm aff,p}\) and
\(\mathcal N_{\rm aff}^{\varepsilon,\kappa}\) emerges only after passage to the
limit \(\varepsilon\downarrow0\). Indeed, \eqref{eq:affine-deficit-lower-scale}
formally suggests that
\[
Q_{\rm aff,p}(\phi)
\ge
\frac2p
\liminf_{\varepsilon\downarrow0}
\mathcal N_{\rm aff}^{\varepsilon,\kappa}(\phi)
-
(p^*-1)\Lambda
\int_{\mathbb R^n}U^{p^*-2}\phi^2\,dx .
\]
Thus the issue is to show that the coercivity of the limiting Hessian persists,
uniformly for the family of finite-scale forms
\(\mathcal N_{\rm aff}^{\varepsilon,\kappa}\), once \(\varepsilon\) is
sufficiently small. To pass from the limiting coercivity to the
\(\varepsilon\)-dependent setting, we require two compactness inputs. The first
is the continuity of
\(
\operatorname{Var}_{\xi\in\mathbb S^{n-1}} \bigl(L_\xi(\phi_j)\bigr)
\)
along weakly convergent sequences in \(\mathcal H_U\). The second is the lower
semicontinuity, as \(\varepsilon\downarrow0\), of the averaged term
\(\langle\mathfrak B_{\xi,\varepsilon}(\phi)\rangle\). The latter is the
content of the next lemma.

\begin{lemma}
\label{lem:averaged-affine-lsc}
Let \(2\le p<n\). Let
\[
\varepsilon_j\downarrow0,
\qquad
\phi_j\rightharpoonup\phi
\quad\text{weakly in }\mathcal H_U,
\qquad
\sup_j\|\nabla\phi_j\|_{L^p(\mathbb R^n)}<\infty .
\]
Then
\begin{equation}
\label{eq:averaged-affine-lsc}
\liminf_{j\to\infty}
\alpha_{n,p}
\left\langle
\mathfrak B_{\xi,\varepsilon_j}(\phi_j)
\right\rangle_{\mathbb S^{n-1}}
\ge
\frac p2
\int_{\mathbb R^n}
|\nabla U|^{p-2}
\left(
|\nabla\phi|^2
+
(p-2)
\frac{(\nabla U\cdot\nabla\phi)^2}{|\nabla U|^2}
\right)\,dx .
\end{equation}
\end{lemma}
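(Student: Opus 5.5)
Since the two summands of \(\mathfrak B_{\xi,\varepsilon}\) are nonnegative, the plan is to bound each from below in the limit for \emph{fixed} \(\xi\), pass through the average over \(\xi\) with Fatou, and then recombine the angular averages using \eqref{eq:angular-quadratic-identity-p} and \eqref{eq:average-Bxi-dirichlet-p}.

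\emph{Fixed direction, first term.} The map \(\phi\mapsto\bigl(\int|\partial_\xi U|^{p-2}|\partial_\xi\phi|^2\bigr)^{1/2}\) is a continuous seminorm on \(\mathcal H_U\), since \(|\partial_\xi U|\le|\nabla U|\). It is therefore weakly lower semicontinuous, and \(\liminf_j\frac p2\int|\partial_\xi U|^{p-2}|\partial_\xi\phi_j|^2\ge\frac p2\int|\partial_\xi U|^{p-2}|\partial_\xi\phi|^2\).

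\emph{Fixed direction, second term.} This is the main obstacle, because \(w_{\xi,\varepsilon}\) depends on \(\phi_j\) nonlinearly. Following \cite[Lemma~3.4]{FZ22} (see also \cite{FN19}), I would fix \(\delta>0\) and a large \(M\), and restrict to \(G_j:=\{|\partial_\xi U|\ge\delta,\ |x|\le M,\ \varepsilon_j|\partial_\xi\phi_j|\le\delta^2\}\). By Chebyshev and \(\sup_j\|\nabla\phi_j\|_{L^p}<\infty\), the complement of \(G_j\) inside \(\{|\partial_\xi U|\ge\delta,|x|\le M\}\) has measure \(O((\varepsilon_j/\delta^2)^p)\to0\). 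On \(G_j\) one has \(\bigl||\partial_\xi U+\varepsilon_j\partial_\xi\phi_j|-|\partial_\xi U|\bigr|/\varepsilon_j=\operatorname{sgn}(\partial_\xi U)\,\partial_\xi\phi_j\). Moreover \(|w_{\xi,\varepsilon_j}|^{p-2}\ge(1-C\delta)|\partial_\xi U|^{p-2}\) in both cases of \eqref{eq:def-w}, because the ratio \(|\partial_\xi U+\varepsilon_j\partial_\xi\phi_j|/|\partial_\xi U|\ge1-\delta\). Hence the second term is at least
\[
(1-C\delta)\frac{p(p-2)}2\int|\partial_\xi U|^{p-2}\,\mathbf 1_{G_j}\,|\partial_\xi\phi_j|^2\,dx .
\]
Since \(\mathbf 1_{G_j}\to\mathbf 1\) in measure on the fixed bounded set and \(\partial_\xi\phi_j\rightharpoonup\partial_\xi\phi\) weakly in \(L^2\) of that set (the weight is bounded below there), the function \(\mathbf 1_{G_j}\partial_\xi\phi_j\) still converges weakly to \(\partial_\xi\phi\). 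This uses uniform integrability from the \(L^p\) bound with \(p\ge2\). Weak lower semicontinuity of the weighted \(L^2\) norm then gives the limit bound \((1-C\delta)\frac{p(p-2)}2\int_{\{|\partial_\xi U|\ge\delta,|x|\le M\}}|\partial_\xi U|^{p-2}|\partial_\xi\phi|^2\). Letting \(\delta\to0\) and \(M\to\infty\) by monotone convergence yields
\[
\liminf_j\mathfrak B_{\xi,\varepsilon_j}(\phi_j)\ge\frac{p(p-1)}2\int|\partial_\xi U|^{p-2}|\partial_\xi\phi|^2\,dx=B_\xi(\phi).
\]
The case \(p=2\) is trivial, since the second term vanishes.

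\emph{Averaging.} Since \(\mathfrak B_{\xi,\varepsilon_j}(\phi_j)\ge0\), Fatou's lemma over \(\xi\in\mathbb S^{n-1}\) gives \(\liminf_j\langle\mathfrak B_{\xi,\varepsilon_j}(\phi_j)\rangle\ge\langle B_\xi(\phi)\rangle\). Multiplying by \(\alpha_{n,p}\), identity \eqref{eq:average-Bxi-dirichlet-p} identifies \(\alpha_{n,p}\langle B_\xi(\phi)\rangle\) with \(\frac p2\int|\nabla U|^{p-2}\bigl(|\nabla\phi|^2+(p-2)(\nabla U\cdot\nabla\phi)^2/|\nabla U|^2\bigr)\). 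That identity was derived for smooth \(\phi\) from \eqref{eq:angular-quadratic-identity-p} pointwise in \(x\) and Tonelli, so it extends to \(\phi\in\mathcal H_U\). This is exactly \eqref{eq:averaged-affine-lsc}.
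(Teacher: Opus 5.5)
Your proof is correct and follows essentially the same route as the paper. Both arguments fix \(\xi\), dispose of \(p=2\), and restrict the second term of \(\mathfrak B_{\xi,\varepsilon_j}\) to a good set where the difference quotient is \(\operatorname{sgn}(\partial_\xi U)\,\partial_\xi\phi_j\) and \(|w_{\xi,\varepsilon_j}|^{p-2}\) is comparable to \(|\partial_\xi U|^{p-2}\); they then use weak lower semicontinuity, Fatou in \(\xi\), and \eqref{eq:angular-quadratic-identity-p}. The one difference is how the bad set is controlled: the paper uses the relative threshold set \(E_{j,d}=\{\varepsilon_j|\partial_\xi\phi_j|\le d|\partial_\xi U|\}\), on whose complement \(|\partial_\xi U|^{p-2}\le(\varepsilon_j/d)^{p-2}|\partial_\xi\phi_j|^{p-2}\), so that \(\int_{E_{j,d}^c}|\partial_\xi U|^{p-2}|\partial_\xi\phi_j|^2\,dx\le d^{2-p}\varepsilon_j^{p-2}\|\partial_\xi\phi_j\|_{L^p}^p\to0\) globally, which makes your localization to \(\{|\partial_\xi U|\ge\delta,\ |x|\le M\}\), the Chebyshev/uniform-integrability step, and the final exhaustion unnecessary.
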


\begin{proof}
Fix \(\xi\in\mathbb S^{n-1}\). We first establish the directional lower bound
\begin{equation}
\label{eq:directional-lsc-direct}
\liminf_{j\to\infty}
\mathfrak B_{\xi,\varepsilon_j}(\phi_j)
\ge
\frac{p(p-1)}2
\int_{\mathbb R^n}
|\partial_\xi U|^{p-2}
|\partial_\xi\phi|^2\,dx .
\end{equation}
For \(p=2\), this is nothing but weak lower semicontinuity of the
\(L^2\)-norm. We therefore restrict attention to the case \(p>2\).

Since
\(|\partial_\xi U|^{p-2}\le|\nabla U|^{p-2}\),
the weak convergence in \(\mathcal H_U\) yields
\begin{equation}
\label{eq:weak-conv-L2xiU}
        \partial_\xi\phi_j
\rightharpoonup
\partial_\xi\phi
\quad
\text{weakly in }
L^2(|\partial_\xi U|^{p-2}dx).
\end{equation}
Moreover,
\[
\sup_j\|\partial_\xi\phi_j\|_{L^p(\mathbb R^n)}
\le
\sup_j\|\nabla\phi_j\|_{L^p(\mathbb R^n)}
<\infty .
\]

Fix \(0< d<1/2\), and set
\[
E_{j, d}
:=
\left\{
|\varepsilon_j\partial_\xi\phi_j|
\le
        d |\partial_\xi U|
\right\}.
\]

On \(E_{j,d}^c\),
\begin{equation}
\label{eq:estimate-small-on-Ec}
\int_{E_{j, d}^c}
|\partial_\xi U|^{p-2}
|\partial_\xi\phi_j|^2\,dx
\le
d^{-(p-2)}
\varepsilon_j^{p-2}
\|\partial_\xi\phi_j\|_{L^p(\mathbb R^n)}^p
\to0.
\end{equation}

On \(E_{j, d}\),
\[
\frac{
|\partial_\xi U+\varepsilon_j\partial_\xi\phi_j|
-
|\partial_\xi U|
}{\varepsilon_j}
=
\operatorname{sgn}(\partial_\xi U)\,\partial_\xi\phi_j .
\]
In addition, by \eqref{eq:def-w},
\[
|w_{\xi,\varepsilon_j}|^{p-2}
\ge
(1- d)^{p-1}
|\partial_\xi U|^{p-2}
\qquad\text{on }E_{j, d}.
\]
Thus, by \eqref{eq:def-Bxi} and \eqref{eq:estimate-small-on-Ec}, we obtain
\[
\begin{aligned}
\mathfrak B_{\xi,\varepsilon_j}(\phi_j)
&\ge
\frac p2
\int_{\mathbb R^n}
|\partial_\xi U|^{p-2}
|\partial_\xi\phi_j|^2\,dx
+
\frac{p(p-2)}2
(1- d)^{p-1}
\int_{E_{j, d}}
|\partial_\xi U|^{p-2}
|\partial_\xi\phi_j|^2\,dx \\
& \ge
\left[
\frac p2
+
\frac{p(p-2)}2(1- d)^{p-1}
\right]
\int_{\mathbb R^n}
|\partial_\xi U|^{p-2}
|\partial_\xi\phi_j|^2\,dx
+o_j(1; d).
\end{aligned}
\]

Taking the liminf and invoking \eqref{eq:weak-conv-L2xiU}, we deduce
\[
\begin{aligned}
\liminf_{j\to\infty}
\mathfrak B_{\xi,\varepsilon_j}(\phi_j)
&\ge
\left[
\frac p2
+
\frac{p(p-2)}2(1- d)^{p-1}
\right]
\int_{\mathbb R^n}
|\partial_\xi U|^{p-2}
|\partial_\xi\phi|^2\,dx .
\end{aligned}
\]
Letting \( d\downarrow0\) yields \eqref{eq:directional-lsc-direct}.

Finally, average in \(\xi\), apply Fatou's lemma on \(\mathbb S^{n-1}\), and
then use \eqref{eq:angular-quadratic-identity-p}. This gives
\[
\begin{aligned}
\liminf_{j\to\infty}
\alpha_{n,p}
\left\langle
\mathfrak B_{\xi,\varepsilon_j}(\phi_j)
\right\rangle_{\mathbb S^{n-1}}
&\ge
\alpha_{n,p}
\frac{p(p-1)}2
\left\langle
\int_{\mathbb R^n}
|\partial_\xi U|^{p-2}
|\partial_\xi\phi|^2\,dx
\right\rangle_{\mathbb S^{n-1}}
\\
&=
\frac p2
\int_{\mathbb R^n}
|\nabla U|^{p-2}
\left(
|\nabla\phi|^2
+
(p-2)
\frac{(\nabla U\cdot\nabla\phi)^2}{|\nabla U|^2}
\right)\,dx .
\end{aligned}
\]
This proves \eqref{eq:averaged-affine-lsc}.
\end{proof}

\begin{lemma}
\label{lem:potential-normalized-compactness-affine}
Let \(2\le p<n\). Fix \(\kappa_1\in(0,1)\). There exists \(C>0\) such that
for every \(0<\kappa\le \kappa_1\), every \(0<\varepsilon\le1\), and every
\(\phi\in\dot W^{1,p}(\mathbb R^n)\) satisfying
\( \int_{\mathbb R^n}U^{p^*-2}\phi^2\,dx=1\),
one has
\begin{equation}
\label{eq:potential-compact}
\mathcal N_{\rm aff}^{\varepsilon,\kappa}(\phi)\le C
\quad\Longrightarrow\quad
\|\phi\|_{\mathcal Z_U}\le C .
\end{equation}
\end{lemma}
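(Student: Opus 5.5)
Since \(\int U^{p^*-2}\phi^2\,dx=1\), we have \(\|\phi\|_{\mathcal Z_U}^2=\|\phi\|_{\mathcal H_U}^2+1\). So it suffices to bound \(\|\phi\|_{\mathcal H_U}^2\) in terms of \(\mathcal N_{\rm aff}^{\varepsilon,\kappa}(\phi)\) and the normalization. The plan is to bound the positive part of \(\mathcal N_{\rm aff}^{\varepsilon,\kappa}\) from below by a fixed multiple of \(\|\phi\|_{\mathcal H_U}^2\), and to bound the negative variance part from above by a multiple of \(\|\phi\|_{\mathcal H_U}^2\) that is strictly smaller. The only danger is that the variance term eats the whole coercive term. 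So the key point is a quantitative comparison of the two constants, uniform in \(\kappa\le\kappa_1\) and in \(\varepsilon\le1\).

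\emph{Lower bound for the positive part.} By \eqref{eq:def-Bxi}, the second summand of \(\mathfrak B_{\xi,\varepsilon}\) is nonnegative, so \(\mathfrak B_{\xi,\varepsilon}(\phi)\ge \frac p2\int|\partial_\xi U|^{p-2}|\partial_\xi\phi|^2\,dx\). Averaging in \(\xi\) and using \eqref{eq:angular-quadratic-identity-p} gives
\[
\alpha_{n,p}\left\langle\mathfrak B_{\xi,\varepsilon}(\phi)\right\rangle_{\mathbb S^{n-1}}\ge \frac{p}{2(p-1)}\int_{\mathbb R^n}|\nabla U|^{p-2}\Bigl(|\nabla\phi|^2+(p-2)\tfrac{(\nabla U\cdot\nabla\phi)^2}{|\nabla U|^2}\Bigr)dx\ge \frac{p}{2(p-1)}\|\phi\|_{\mathcal H_U}^2 .
\]
This is uniform in \(\varepsilon\). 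The factor \((1-\kappa)\ge 1-\kappa_1>0\) is harmless.

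\emph{Upper bound for the variance.} This is the main obstacle. A crude Cauchy--Schwarz bound gives \(|L_\xi(\phi)|^2\le p^2A_0\int|\partial_\xi U|^{p-2}|\partial_\xi\phi|^2\), which is of the same order as the coercive term. Its constant may be too large to absorb, and the \(\varepsilon\)-dependent improvement in \(\mathfrak B\) (the factor \(p-1\) instead of \(1\)) is not available uniformly. I would instead use the structure of \(L_\xi\). Writing \(\partial_\xi U=U'(r)(\theta\cdot\xi)\) and integrating by parts, \(L_\xi(\phi)=-p\int \partial_\xi\bigl(|\partial_\xi U|^{p-2}\partial_\xi U\bigr)\phi\,dx\). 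Here \(\partial_\xi(|\partial_\xi U|^{p-2}\partial_\xi U)=(p-1)|\partial_\xi U|^{p-2}\partial^2_{\xi\xi}U\), and for the Talenti profile this has size \(\lesssim |\nabla U|^{p-2}|D^2U|\). This quantity is controlled by \(U^{p^*-2}\cdot U\cdot\)(bounded weight) after checking the decay: \(|\nabla U|^{p-2}|D^2U|\sim r^{-(n-p)(p-1)/(p-1)\cdot\ldots}\). Concretely, I would verify the pointwise bound \(|\partial_\xi(|\partial_\xi U|^{p-2}\partial_\xi U)|\le C\,U^{p^*-1}\) (or at least a bound in \(L^2(U^{2-p^*})\)). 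Both sides behave like \(|x|^{-(n(p-1)+p)/(p-1)\cdot\ldots}\) at infinity, and near the origin \(|\nabla U|^{p-2}|D^2U|\) is bounded since \(p'\)-homogeneity gives \(U'\sim r^{1/(p-1)}\). Given that bound, Cauchy--Schwarz yields
\[
|L_\xi(\phi)|\le C\Bigl(\int U^{p^*}\Bigr)^{1/2}\Bigl(\int U^{p^*-2}\phi^2\Bigr)^{1/2}=C',
\]
uniformly in \(\xi\). Hence \(\operatorname{Var}_\xi(L_\xi(\phi))\le\langle L_\xi(\phi)^2\rangle\le C'^2\), a constant independent of \(\|\phi\|_{\mathcal H_U}\). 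This removes the competition of constants altogether. The remaining technical points are the integration by parts for \(\phi\in\dot W^{1,p}\), handled by density together with the decay of \(U\), and the case \(p=2\), where the same bound reads \(|D^2U|\lesssim U^{p^*-1}\) and holds directly.

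\emph{Conclusion.} Combining the two bounds gives
\[
C\ge\mathcal N_{\rm aff}^{\varepsilon,\kappa}(\phi)\ge \frac{(1-\kappa_1)p}{2(p-1)}\|\phi\|_{\mathcal H_U}^2-(1+\kappa_1)\alpha_{n,p}\frac{\tau+1}{2A_0}C'^2 .
\]
Hence \(\|\phi\|_{\mathcal H_U}^2\le C''\), and therefore \(\|\phi\|_{\mathcal Z_U}\le C\) with \(C\) depending only on \(n,p,\kappa_1\) and the assumed bound on \(\mathcal N_{\rm aff}^{\varepsilon,\kappa}\). If the pointwise bound by \(U^{p^*-1}\) failed, my fallback would be to use the weighted Poincaré inequality \eqref{eq:weighted-Poincare} with a small parameter, splitting \(\phi\) into low and high angular modes. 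Only the degree-zero and degree-two harmonics contribute to \(\operatorname{Var}_\xi L_\xi\), and on those finitely many sectors the compact embedding \eqref{eq:weighted-compact-embedding} bounds \(L_\xi\) by the \(L^2(U^{p^*-2})\) norm.
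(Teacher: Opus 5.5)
Your lower bound for the positive part is correct, and you are right that a crude Cauchy--Schwarz bound on the variance competes with it at the same order. However, the estimate you use to escape that competition is false. Since $|U'|^{p-2}U'\approx -c\,r^{1-n}$ as $r\to\infty$, a direct computation gives
\[
\partial_\xi\bigl(|\partial_\xi U|^{p-2}\partial_\xi U\bigr)\approx -c\,|x|^{-n}\Bigl[(p-1)|\theta\cdot\xi|^{p-2}-(n+p-2)|\theta\cdot\xi|^{p}\Bigr],
\qquad |x|\to\infty .
\]
So this quantity decays only like $|x|^{-n}$ (e.g.\ along $\theta=\pm\xi$), while $U^{p^*-1}\sim|x|^{-n-p'}$. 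The extra decay in $-\Delta_pU=\Lambda U^{p^*-1}$ comes from cancellation after averaging in $\xi$ (the bracket has zero $\xi$-mean). That averaged part is exactly what the variance discards. For $p=2$ the same remark shows that $|D^2U|\lesssim U^{p^*-1}$ is false: individual second derivatives decay like $|x|^{-n}$, and only $\Delta U$ decays like $|x|^{-n-2}$. Your weaker fallback, membership of this function in $L^2(U^{2-p^*}dx)$, requires $\int^\infty r^{\frac{2p-n}{p-1}-1}\,dr<\infty$, i.e.\ $n>2p$.

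For $p<n<2p$ (e.g.\ $p=2$, $n=3$) the claimed bound $\operatorname{Var}_\xi(L_\xi(\phi))\le C'^2$ under $\int U^{p^*-2}\phi^2=1$ is itself false. Take $\phi=f(r)Y_{2,m}(\theta)$ with $f$ a bump on $[R,2R]$. Then $|R_2(f)|\ge c>0$ uniformly in $R$: the term $\frac{\lambda_2}{p}\int|U'|^{p-2}U' f\,r^{n-2}dr\approx -c\int_R^{2R}f\,\frac{dr}{r}$ dominates, and the $f'$ term is $O(R^{-p'})$. On the other hand, $\int U^{p^*-2}\phi^2\,dx\sim R^{\frac{n-2p}{p-1}}\to0$. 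Since $d_{2,p}\neq0$, after normalization the variance grows like $R^{\frac{2p-n}{p-1}}$. At $n=2p$ it grows like $\log R$, using bumps on $[R,R^2]$.

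The missing idea is that the variance need not be bounded by the weighted $L^2$ norm alone; it only has to be a compact perturbation. Because $\xi\mapsto L_\xi$ is continuous into $\mathcal H_U'$, the map $\phi\mapsto(L_\xi(\phi))_{\xi\in\mathbb S^{n-1}}$ is compact from $\mathcal H_U$ into $C(\mathbb S^{n-1})$. Combining this with the compact embedding \eqref{eq:weighted-compact-embedding} in a contradiction argument gives, for every $\delta>0$,
\[
\alpha_{n,p}\frac{\tau+1}{2A_0}\operatorname{Var}_\xi(L_\xi(\phi))\le\delta\int_{\mathbb R^n}|\nabla U|^{p-2}\Bigl(|\nabla\phi|^2+(p-2)\tfrac{(\nabla U\cdot\nabla\phi)^2}{|\nabla U|^2}\Bigr)dx+C_\delta\int_{\mathbb R^n}U^{p^*-2}\phi^2\,dx .
\]
Choosing $\delta$ small compared with $\frac{(1-\kappa_1)p}{2(p-1)}$ closes the argument for all $2\le p<n$. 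This is exactly the paper's proof.

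Your fallback gestures in this direction but is inaccurate as stated. The radial sector never contributes to the variance. Every even degree $\ell\ge2$ contributes unless $p$ is an even integer (see \eqref{eq:rho-ratio-p}). Finally, compactness does not bound $L_\xi$ by the $L^2(U^{p^*-2})$ norm; it yields only the $\delta$--$C_\delta$ inequality above, which holds on all of $\mathcal H_U$ with no sector splitting.
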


\begin{proof}
It suffices to prove the lower bound
\begin{equation}
\label{eq:lower-bdd}
\mathcal N_{\rm aff}^{\varepsilon,\kappa}(\phi)
\ge
c
\int_{\mathbb R^n}
|\nabla U|^{p-2}
\left(
|\nabla\phi|^2
+
(p-2)
\frac{(\nabla U\cdot\nabla\phi)^2}{|\nabla U|^2}
\right)\,dx
-
C
\int_{\mathbb R^n}
U^{p^*-2}\phi^2\,dx .
\end{equation}
Indeed, once \eqref{eq:lower-bdd} is available,
\eqref{eq:potential-compact} follows immediately from the normalization
\(\int_{\mathbb R^n}U^{p^*-2}\phi^2\,dx=1\).

We begin with the positive term. By \eqref{eq:def-Bxi},
\[
\mathfrak B_{\xi,\varepsilon}(\phi)
\ge
\frac p2
\int_{\mathbb R^n}
|\partial_\xi U|^{p-2}|\partial_\xi\phi|^2\,dx .
\]
Averaging in \(\xi\) and invoking \eqref{eq:angular-quadratic-identity-p}, we
obtain for every \(0<\kappa\le\kappa_1<1\),
\begin{equation}
\label{eq:B-lower-by-G}
\begin{aligned}
\alpha_{n,p}(1-\kappa)
\left\langle
\mathfrak B_{\xi,\varepsilon}(\phi)
\right\rangle_{\mathbb S^{n-1}}
&\ge c
\int_{\mathbb R^n}
|\nabla U|^{p-2}
\left(
|\nabla\phi|^2+(p-2)
\frac{(\nabla U\cdot\nabla\phi)^2}{|\nabla U|^2}
\right)\,dx .
\end{aligned}
\end{equation}

It remains to handle the negative variance term. We claim that for every
\(\delta>0\), there exists \(C_\delta>0\) such that
\begin{equation}
\label{eq:energy-variance-compact-bound}
\begin{aligned}
\alpha_{n,p}
\frac{\tau+1}{2A_0}
\operatorname{Var}_\xi(L_\xi(\phi))
&\le
\delta
\int_{\mathbb R^n}
|\nabla U|^{p-2}
\left(
|\nabla\phi|^2
+(p-2)
\frac{(\nabla U\cdot\nabla\phi)^2}{|\nabla U|^2}
\right)\,dx\\
&\quad+C_\delta
\int_{\mathbb R^n}U^{p^*-2}\phi^2\,dx .
\end{aligned}
\end{equation}
Assume, to the contrary, that \eqref{eq:energy-variance-compact-bound} fails.
Then for some \(\delta>0\), there exists a sequence
\(\phi_j\in\mathcal Z_U\) such that
\[
\begin{aligned}
\alpha_{n,p}
\frac{\tau+1}{2A_0}
\operatorname{Var}_\xi(L_\xi(\phi_j))
&>\delta
\int_{\mathbb R^n}
|\nabla U|^{p-2}
\left(
|\nabla\phi_j|^2+(p-2)
\frac{(\nabla U\cdot\nabla\phi_j)^2}{|\nabla U|^2}
\right)\,dx\\
&\quad
+j\int_{\mathbb R^n}U^{p^*-2}\phi_j^2\,dx .
\end{aligned}
\]
By homogeneity, we may normalize this sequence so that
\(\alpha_{n,p}
\frac{\tau+1}{2A_0}
\operatorname{Var}_\xi(L_\xi(\phi_j))=1\).
Consequently,
\[
\int_{\mathbb R^n}
|\nabla U|^{p-2}
\left(
|\nabla\phi_j|^2+(p-2)
\frac{(\nabla U\cdot\nabla\phi_j)^2}{|\nabla U|^2}
\right)\,dx\le
\delta^{-1},
\]
and
\begin{equation}
\label{eq:psijto0}
 \int_{\mathbb R^n}U^{p^*-2}\phi_j^2\,dx\le j^{-1}.
\end{equation}
Thus \((\phi_j)\) is bounded in \(\mathcal H_U\). By
\eqref{eq:weighted-compact-embedding}, after passing to a subsequence,
\[
\phi_j\rightharpoonup\phi
\quad\text{weakly in }\mathcal H_U,
\qquad
\phi_j\to\phi
\quad\text{strongly in }L^2(\mathbb R^n,U^{p^*-2}dx).
\]
In view of \eqref{eq:psijto0}, the limit must satisfy \(\phi=0\).

We now pass to the variance term. By the continuity statement already recorded
in \eqref{eq:cts-varL},
\(\operatorname{Var}_\xi(L_\xi(\phi_j))\to0\),
which contradicts the normalization. This proves
\eqref{eq:energy-variance-compact-bound}.

Combining \eqref{eq:B-lower-by-G} with
\eqref{eq:energy-variance-compact-bound}, and then choosing \(\delta>0\) small
depending only on \(\kappa_1\), we obtain \eqref{eq:lower-bdd}. The proof is
complete.
\end{proof}

\subsection{Proof of Proposition~\ref{prop:calibrated-affine-spectral}}

\begin{proof}[Proof of Proposition~\ref{prop:calibrated-affine-spectral}]
We argue by contradiction. By Cauchy--Schwarz,
\[
\begin{aligned}
|L_\xi(\phi)|^2
&=p^2
\left|
\int_{\mathbb R^n}
|\partial_\xi U|^{p-2}\partial_\xi U\,\partial_\xi\phi\,dx
\right|^2  \\
&\le p^2
\left(
\int_{\mathbb R^n}|\partial_\xi U|^p\,dx
\right)
\left(
\int_{\mathbb R^n}
|\partial_\xi U|^{p-2}|\partial_\xi\phi|^2\,dx
\right)\le
C
\int_{\mathbb R^n}
|\nabla U|^{p-2}
|\nabla\phi|^2 \,dx
\end{aligned}
\]
uniformly in \(\xi\). It follows that there exists \(C_*>0\) such that
\begin{equation}
\label{eq:Rp-Gp-P}
\begin{aligned}
&\int_{\mathbb R^n}
|\nabla U|^{p-2}
\left(
|\nabla\phi|^2+
(p-2)
\frac{(\nabla U\cdot\nabla\phi)^2}{|\nabla U|^2}
\right)\,dx
+
\alpha_{n,p}
\frac{\tau+1}{pA_0}
\operatorname{Var}_\xi(L_\xi(\phi))
\\
&\qquad\le C_*
\left[
\int_{\mathbb R^n}
|\nabla U|^{p-2}
|\nabla\phi|^2
\,dx
\right].
\end{aligned}
\end{equation}
Choose \(\kappa_0>0\) so small that
\( C_*\kappa_0\le \frac{c_{\rm sg}}4\),
and set
\( \eta_0=\frac{c_{\rm sg}}{4c_0}\).
Assume that \eqref{eq:calibrated-affine-spectral} fails. Then there exist
\[
0<\kappa_j\le\kappa_0,
\qquad
\varepsilon_j\downarrow0,
\qquad
\phi_j\in\dot W^{1,p}(\mathbb R^n),
\]
such that
\[
\|\nabla\phi_j\|_{L^p(\mathbb R^n)}=1,
\qquad
\phi_j\perp_{L^2(\mathbb R^n,U^{p^*-2}dx)}T_U\mathcal M_{\rm aff},
\]
but
\begin{equation}
\label{eq:false-calibrated}
\mathcal N_{\rm aff}^{\varepsilon_j,\kappa_j}(\phi_j)
<
\frac p2
\bigl((p^*-1)\Lambda+\eta_0\bigr)
\int_{\mathbb R^n}U^{p^*-2}\phi_j^2\,dx .
\end{equation}

For each \(j\),
\(\int_{\mathbb R^n}U^{p^*-2}\phi_j^2\,dx>0\),
for otherwise \(\phi_j=0\) almost everywhere, contradicting
\(\|\nabla\phi_j\|_{L^p(\mathbb R^n)}=1\). Define
\[
\lambda_j:=
\left(
\int_{\mathbb R^n}U^{p^*-2}\phi_j^2\,dx
\right)^{1/2},
\qquad
\widetilde\phi_j:=\lambda_j^{-1}\phi_j,
\qquad
\widetilde\varepsilon_j:=\varepsilon_j\lambda_j .
\]
Then
\[
\int_{\mathbb R^n}U^{p^*-2}\widetilde \phi_j^2\,dx=1,
\qquad
\widetilde \phi_j\perp_{L^2(\mathbb R^n,U^{p^*-2}dx)}T_U\mathcal M_{\rm aff}.
\]
Moreover, by H\"older's inequality and Sobolev's inequality,
\[
\lambda_j^2
=
\int_{\mathbb R^n}U^{p^*-2}\phi_j^2\,dx
\le
C\|\phi_j\|_{L^{p^*}(\mathbb R^n)}^2
\le
C\|\nabla\phi_j\|_{L^p(\mathbb R^n)}^2
=C.
\]
Hence
\(\widetilde\varepsilon_j=\varepsilon_j\lambda_j\to0\).
By the homogeneity of \(\mathcal N_{\rm aff}^{\varepsilon,\kappa}\),
\eqref{eq:false-calibrated} becomes
\begin{equation}
\label{eq:false-normalized}
\mathcal N_{\rm aff}^{\widetilde\varepsilon_j,\kappa_j}(\widetilde\phi_j)
<
\frac p2
\bigl((p^*-1)\Lambda+\eta_0\bigr).
\end{equation}
By Lemma~\ref{lem:potential-normalized-compactness-affine},
\( \|\widetilde\phi_j\|_{\mathcal Z_U}\le C\).
Thus, after passing to a subsequence,
by \eqref{eq:weighted-compact-embedding},
\[
\widetilde\phi_j\rightharpoonup\phi
\qquad\text{weakly in }\mathcal Z_U,
\qquad
\widetilde\phi_j\to\phi
\qquad\text{strongly in }
L^2(\mathbb R^n,U^{p^*-2}dx).
\]
Therefore
\[
\int_{\mathbb R^n}U^{p^*-2}\phi^2\,dx=1,
\qquad
\phi\perp_{L^2(\mathbb R^n,U^{p^*-2}dx)}T_U\mathcal M_{\rm aff}.
\]

Using Lemma~\ref{lem:averaged-affine-lsc},
Proposition~\ref{prop:affine-spectral-gap},
\eqref{eq:def-N-aff-FZ}, \eqref{eq:weighted-Poincare}, \eqref{eq:cts-varL} and \eqref{eq:Rp-Gp-P}, we obtain
\begin{equation}
\label{eq:N-estimate}
\begin{aligned}
&\liminf_{j\to\infty}
\frac2p
\mathcal N_{\rm aff}^{\widetilde\varepsilon_j,\kappa_j}(\widetilde\phi_j)\\
&\ge
(1-\kappa_0)
\int_{\mathbb R^n}
|\nabla U|^{p-2}
\left(
|\nabla\phi|^2
+
(p-2)
\frac{(\nabla U\cdot\nabla\phi)^2}{|\nabla U|^2}
\right)\,dx
\\
&\qquad\quad-
(1+\kappa_0)
\alpha_{n,p}
\frac{\tau+1}{pA_0}
\operatorname{Var}_\xi(L_\xi(\phi))
\\
&\ge
\int_{\mathbb R^n}
|\nabla U|^{p-2}
\left(
|\nabla\phi|^2
+
(p-2)
\frac{(\nabla U\cdot\nabla\phi)^2}{|\nabla U|^2}
\right)\,dx -
\alpha_{n,p}
\frac{\tau+1}{pA_0}
\operatorname{Var}_\xi(L_\xi(\phi))
\\
&\qquad\quad
-
\frac{c_{\rm sg}}4
\int_{\mathbb R^n}
|\nabla U|^{p-2}
|\nabla\phi|^2\,dx\\
&=Q_{\rm aff,p}(\phi)
+
(p^*-1)\Lambda
\int_{\mathbb R^n}U^{p^*-2}\phi^2\,dx-
\frac{c_{\rm sg}}4
\int_{\mathbb R^n}
|\nabla U|^{p-2}
|\nabla\phi|^2\,dx \\
&\ge
(p^*-1)\Lambda
\int_{\mathbb R^n}U^{p^*-2}\phi^2\,dx
+\frac{3}{4}
c_{\rm sg}
\int_{\mathbb R^n}
|\nabla U|^{p-2}
|\nabla\phi|^2\,dx\\
&\ge \left((p^*-1)\Lambda +\frac{3c_{\rm sg}}{4c_0}
\right)
\int_{\mathbb R^n}U^{p^*-2}\phi^2\,dx.
\end{aligned}
\end{equation}

Since
\( \int_{\mathbb R^n}U^{p^*-2}\phi^2\,dx=1\),
it follows from \eqref{eq:N-estimate} that
\[
\liminf_{j\to\infty}
\mathcal N_{\rm aff}^{\widetilde\varepsilon_j,\kappa_j}(\widetilde\phi_j)
\ge
\frac p2
\left((p^*-1)\Lambda+\frac{3c_{\rm sg}}{4c_0}\right).
\]
This contradicts \eqref{eq:false-normalized}, because
\(\eta_0=\frac {c_{\rm sg}}{4c_0}\). The contradiction proves
Proposition~\ref{prop:calibrated-affine-spectral}.
\end{proof}

\section{Qualitative compactness and global affine stability}
\label{sec:global-affine-stability}

We next turn to the global part of the argument. The starting point is an
affine normalization adapted to the convex body canonically associated with a
function in affine Sobolev theory. Following Zhang~\cite{Zhang99} in the
endpoint case, and Lutwak--Yang--Zhang~\cite{LYZ02} for \(1<p<n\), one
associates to \(u\) the unit ball of the directional derivative norm
\[
\xi\mapsto A_\xi(u)^{1/p}
=
\left(
\int_{\mathbb R^n}|\xi\cdot\nabla u|^p\,dx
\right)^{1/p}.
\]
We then invoke John's ellipsoid theorem
\cite[Theorem~10.12.2]{Schneider14} in order to select a
volume-preserving affine normalization.

\begin{lemma}
\label{lem:affine-normalization}
Let \(1<p<n\), and let \(0\ne u\in\dot W^{1,p}(\mathbb R^n)\). Then there
exists \(M\in SL(n)\) such that, with \(Su:=T_{M,0}u\),
\[
\|Su\|_{L^{p^*}}=\|u\|_{L^{p^*}},
\qquad
\mathcal E(Su)=\mathcal E(u),
\]
and
\[
\|\nabla Su\|_{L^p}^p
\le
C(n,p)\mathcal E(u)^p .
\]
\end{lemma}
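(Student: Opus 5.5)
The plan is to encode the directional energies of \(u\) as a norm on \(\mathbb R^n\), observe that \(T_{M,0}\) acts on its unit ball by the linear map \(M\), and then use John's theorem to choose \(M\) making that body nearly round.

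\emph{Step 1 (the associated convex body).} Set \(N_u(\xi):=A_\xi(u)^{1/p}=\|\xi\cdot\nabla u\|_{L^p}\) for \(\xi\in\mathbb R^n\), extended \(1\)-homogeneously. It is even, \(1\)-homogeneous and subadditive by Minkowski's inequality. It is continuous, since \(|N_u(\xi)-N_u(\eta)|\le|\xi-\eta|\,\|\nabla u\|_{L^p}\). It is strictly positive on \(\mathbb S^{n-1}\): if \(\partial_\xi u=0\) a.e. for some \(\xi\ne 0\), then \(u\) is constant along lines parallel to \(\xi\). Such a function cannot lie in \(L^{p^*}\) unless it vanishes, and this contradicts \(u\neq0\) in \(\dot W^{1,p}\). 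Hence \(N_u\) is a norm, and its unit ball \(K_u\) is an origin-symmetric convex body. By the polar-coordinate formula for volume,
\[
|K_u|=\frac1n\int_{\mathbb S^{n-1}}N_u(\xi)^{-n}\,d\sigma(\xi)=\omega_n\langle A_\xi(u)^{-n/p}\rangle_{\mathbb S^{n-1}},
\]
where \(\omega_n=|B_1|\). Combined with \eqref{eq:intro-affine-energy-p}, this gives
\[
\mathcal E(u)=m_{n,p}^{-1/p}\left(|K_u|/\omega_n\right)^{-1/n}.
\]

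\emph{Step 2 (transformation rules).} Take \(\lambda=1\), \(x_0=0\), so that \(Su(x)=u(M^{-1}x)\) and \(\nabla Su(x)=M^{-T}\nabla u(M^{-1}x)\). Changing variables with \(\det M=1\) gives \(\|Su\|_{L^{p^*}}=\|u\|_{L^{p^*}}\) and \(A_\xi(Su)=A_{M^{-1}\xi}(u)\). Thus \(N_{Su}=N_u\circ M^{-1}\), so \(K_{Su}=MK_u\) and \(|K_{Su}|=|K_u|\). By Step 1, \(\mathcal E(Su)=\mathcal E(u)\).

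\emph{Step 3 (John normalization).} By John's theorem \cite[Theorem~10.12.2]{Schneider14} for symmetric bodies, there is a linear map, which after rescaling we may take to be \(M\in SL(n)\), and an \(r>0\) with
\[
rB_1\subset MK_u\subset \sqrt n\,rB_1 .
\]
The inner inclusion gives \(N_{Su}(\xi)\le r^{-1}\) on \(\mathbb S^{n-1}\). Using \eqref{eq:identify-z} with \(z=\nabla Su(x)\) and integrating in \(x\), we get
\[
\|\nabla Su\|_{L^p}^p=m_{n,p}^{-1}\langle A_\xi(Su)\rangle_{\mathbb S^{n-1}}\le m_{n,p}^{-1}r^{-p}.
\]
The outer inclusion gives \(|K_u|\le n^{n/2}r^n\omega_n\), hence \(\mathcal E(u)^p\ge m_{n,p}^{-1}n^{-p/2}r^{-p}\). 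Together these yield \(\|\nabla Su\|_{L^p}^p\le n^{p/2}\mathcal E(u)^p\), i.e.\ the claim with \(C(n,p)=n^{p/2}\).

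The only delicate point is Step 1, namely that \(K_u\) is a bounded convex body; this needs the positivity of \(N_u\) on the sphere. Everything afterwards is bookkeeping with the change of variables and John's theorem.
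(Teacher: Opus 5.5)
Your proposal is correct and follows the paper's proof essentially step by step: the body \(K_u\), the polar formula \(\mathcal E(u)^p=m_{n,p}^{-1}\omega_n^{p/n}|K_u|^{-p/n}\), John's theorem for symmetric bodies, and averaging \eqref{eq:identify-z} against the inner ball. Your transformation rule \(A_\xi(Su)=A_{M^{-1}\xi}(u)\), \(K_{Su}=MK_u\) is the correct one; the paper writes \(A_{M\xi}(u)\) and \(K_{Su}=M^{-1}K_u\), which is a harmless inversion slip that only changes which matrix is chosen. Beyond the paper, you also justify that \(K_u\) is bounded and you obtain the explicit constant \(C(n,p)=n^{p/2}\).
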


\begin{proof}
Let
\( K_u=\{\xi\in\mathbb R^n:A_\xi(u)^{1/p}\le1\}\).
Then \(K_u\) is the origin-symmetric convex body.
By the polar formula, together with \eqref{eq:intro-affine-energy-p},
\[
\mathcal E(u)^p
=
m_{n,p}^{-1}\omega_n^{p/n}|K_u|^{-p/n}.
\]

By John's theorem \cite[Theorem~10.12.2]{Schneider14}, we may choose an
origin-centered ellipsoid \(E\) such that
\(E\subset K_u\subset \sqrt n\,E\).
Since \(E\) is centered at the origin, there exists \(L\in SL(n)\) such that
\( L(E)=B_r\), where \(|B_r|=|E|\). Moreover,
\[
|K_u|\le n^{n/2}|E|=n^{n/2}\omega_n r^n,
\qquad\text{so}\qquad
r^{-p}\le C(n)|K_u|^{-p/n}.
\]
Set
\( M=L^{-1}\), \(Su=T_{M,0}u\). By \eqref{eq:def-Axi},
\[
A_\xi(Su)=A_{M\xi}(u),
\qquad
K_{Su}=M^{-1}K_u=L(K_u).
\]
Hence \(B_r\subset K_{Su}\), and therefore
\[
A_\xi(Su)\le r^{-p}
\qquad \forall\,\xi\in\mathbb S^{n-1}.
\]
Averaging over \(\mathbb S^{n-1}\), we obtain
\[
m_{n,p}\|\nabla Su\|_{L^p}^p
=
\left\langle A_\xi(Su)\right\rangle_{\mathbb S^{n-1}}
\le r^{-p}.
\]
Consequently,
\[
\|\nabla Su\|_{L^p}^p
\le
C(n,p)|K_u|^{-p/n}
\le
C(n,p)\mathcal E(u)^p .
\]

Finally, since \(M\in SL(n)\), both the \(L^{p^*}(\mathbb R^n)\)-norm and the
affine energy are invariant under \(T_{M,0}\). This proves the lemma.
\end{proof}

The purpose of Lemma~\ref{lem:affine-normalization} is to remove the
anisotropic escape allowed by the affine invariance. Although
\(\mathcal E\) is invariant under \(SL(n)\), the Euclidean
Dirichlet energy need not be bounded along an affine orbit. The lemma chooses
a volume-preserving affine representative for which  the  gradient energy
is controlled by the affine energy.

\begin{proposition}
\label{prop:qualitative-affine-compactness}
Let \(2\le p<n\). Let \(u_k\in\dot W^{1,p}(\mathbb R^n)\) satisfy
\[
\|u_k\|_{L^{p^*}(\mathbb R^n)}=1,
\qquad
\delta_{\rm aff}(u_k)\to0.
\]
Then there exist
\(c_k\neq0\), \(A_k\in SL(n)\), \(\lambda_k>0\), \(x_k\in\mathbb R^n\)
such that
\begin{equation}
\label{eq:conv-w1p}
c_k^{-1}T_{\lambda_k A_k,x_k}u_k
\to U
\qquad
\text{strongly in }\dot W^{1,p}(\mathbb R^n).
\end{equation}
\end{proposition}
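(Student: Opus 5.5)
The plan is to reduce to the classical Sobolev compactness theory, using Lemma~\ref{lem:affine-normalization} to remove the $SL(n)$ degeneracy. First, replace $u_k$ by $Su_k=T_{M_k,0}u_k$ from Lemma~\ref{lem:affine-normalization}. This preserves $\|u_k\|_{L^{p^*}}=1$ and $\mathcal E(u_k)$, hence $\delta_{\rm aff}$. Since $\delta_{\rm aff}(u_k)\to0$, we have $\mathcal E(u_k)^p\to S_{n,p}^p$. The lemma then gives $\|\nabla Su_k\|_{L^p}^p\le C(n,p)$. Thus $v_k:=Su_k$ is bounded in $\dot W^{1,p}$, has unit $L^{p^*}$ norm, and is an affine near-extremizer. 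Note that $v_k$ need not be a classical near-extremizer, since $\|\nabla v_k\|_p$ may stay strictly above $S_{n,p}$.

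Next, apply the critical Sobolev profile decomposition (Gérard / Solimini type) to $v_k$. Up to a subsequence, $v_k=\sum_{i=1}^{N} T_{\lambda_k^i,x_k^i}^{-1}w^i+r_k^N$ with asymptotically orthogonal scales and centers, and $\limsup_N\limsup_k\|r_k^N\|_{L^{p^*}}=0$. This yields $1=\sum_i\|w^i\|_{p^*}^{p^*}$. For each fixed $\xi$, the directional energies decouple:
\[
A_\xi(v_k)\ge\sum_i A_\xi(w^i)+A_\xi(r_k^N)+o(1).
\]
I would derive this from the Brezis--Lieb-type splitting for $|\partial_\xi\cdot|^p$ along orthogonal profiles. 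The bounds are uniform in $\xi$ by equicontinuity in $\xi$, since $|A_\xi(f)^{1/p}-A_\eta(f)^{1/p}|\le|\xi-\eta|\,\|\nabla f\|_p$.

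By monotonicity of $\Phi$ and the subadditivity \eqref{eq:sub-add}, this gives
\[
\mathcal E(v_k)^p\ge\sum_i\mathcal E(w^i)^p+o(1)\ge S_{n,p}^p\sum_i\|w^i\|_{p^*}^p .
\]
Combined with $\sum_i\|w^i\|_{p^*}^{p^*}=1$ and $p<p^*$, strict concavity forces exactly one nonzero profile $w$ with $\|w\|_{p^*}=1$. Moreover $\mathcal E(w)=S_{n,p}$, and the remainder satisfies $\Phi(A_\xi(r_k))\to0$. By the LYZ equality characterization, $w\in\mathcal M_{\rm aff}$, say $w=c\,T^{-1}_{\lambda A,x_0}U$.

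The main obstacle is upgrading to strong convergence in $\dot W^{1,p}$, because vanishing of the negative mean $\Phi(A_\xi(r_k))$ does not by itself force $\nabla r_k\to0$. I would use Lemma~\ref{lem:affine-normalization} again. The rescaled sequence $\tilde v_k=T_{\lambda_k,x_k}v_k\rightharpoonup w$ weakly, and $A_\xi(\tilde v_k)\to A_\xi(w)+\lim A_\xi(r_k)$ pointwise and uniformly in $\xi$. Since $\Phi$ of the limit equals $\Phi(A_\xi(w))$, and $\Phi$ is strictly monotone on positive continuous functions (if $a\ge b>0$ with $a\ne b$ on an open set, then $\Phi(a)>\Phi(b)$), we get $\lim A_\xi(r_k)=0$ for every $\xi$. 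Here continuity in $\xi$ of the limit function is needed, which follows from the equicontinuity noted above. Averaging in $\xi$ gives $\|\nabla r_k\|_p\to0$.

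Hence $\tilde v_k\to w$ strongly in $\dot W^{1,p}$. Composing with the affine map in $w$ and with $M_k$ produces $c_k$, $A_k$, $\lambda_k$, $x_k$ satisfying \eqref{eq:conv-w1p}.
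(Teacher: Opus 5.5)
Your outline follows the paper's proof closely: John normalization via Lemma~\ref{lem:affine-normalization}, profile decomposition, superadditivity of $\Phi$ to exclude splitting, the LYZ equality cases, and strict monotonicity of $\Phi$ for the upgrade. However, the upgrade step rests on a claim that is not available.

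\textbf{The gap.} You assert $A_\xi(\tilde v_k)\to A_\xi(w)+\lim_k A_\xi(r_k)$ by a Brezis--Lieb splitting of $|\partial_\xi\cdot|^p$. Brezis--Lieb needs $\partial_\xi\tilde v_k\to\partial_\xi w$ almost everywhere. Neither weak convergence in $\dot W^{1,p}$ nor the profile decomposition gives a.e.\ convergence of gradients; Rellich only gives it for $\tilde v_k$ itself.

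For $p>2$ the splitting genuinely fails for general weakly null remainders. It fails even as the one-sided inequality with constant $1$ in front of $A_\xi(r_k)$. Pointwise, $|a+b|^p\ge|a|^p+p|a|^{p-2}ab+|b|^p$ is false: for $p=3$, $a=1$, $b=-3$ the left side is $8$ and the right side is $19$. Mean-zero oscillating gradients $\partial_1 r_k\approx\psi'(kx_1)\chi(x)$ realize this in integrated form. For example, take $\psi'$ equal to $-3c$ with density $1/10$ and $c/3$ with density $9/10$, localized where $\partial_1 w\approx c\neq0$. Then $r_k\rightharpoonup0$ in $\dot W^{1,p}$ and $r_k\to0$ in $L^{p^*}$, yet $\lim A_{e_1}(w+r_k)<A_{e_1}(w)+\lim A_{e_1}(r_k)$. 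A telltale sign is that your argument never uses $p\ge2$.

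In the splitting-exclusion step this is harmless. There the remainder can be discarded, and you only need $A_\xi(v_k)\ge\sum_iA_\xi(w^i)+o(1)$. That follows from plain convexity, $|a+b|^p\ge|a|^p+p|a|^{p-2}ab$, together with weak convergence.

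\textbf{The repair.} This is exactly the paper's device.
\begin{itemize}
\item For $p\ge2$, use $|a+b|^p\ge|a|^p+p|a|^{p-2}ab+c_p|b|^p$ with some $c_p>0$, applied to $a=\partial_\xi w$ and $b=\partial_\xi r_k$.
\item The linear term $p\int|\partial_\xi w|^{p-2}\partial_\xi w\,\partial_\xi r_k\,dx$ tends to $0$ uniformly in $\xi$. Indeed, $\xi\mapsto|\partial_\xi w|^{p-2}\partial_\xi w\,\xi$ is continuous into $L^{p'}(\mathbb R^n;\mathbb R^n)$, so its image is compact, and $\nabla r_k\rightharpoonup0$ in $L^p$.
\item This gives $A_\xi(\tilde v_k)\ge A_\xi(w)+c_pA_\xi(r_k)+o(1)$ uniformly in $\xi$.
\item Extract a uniform limit $a\ge0$ of $A_\xi(r_k)$ using your equicontinuity bound and Arzel\`a--Ascoli.
\item Then $\Phi(A_\xi(w))=\lim_k\Phi(A_\xi(\tilde v_k))\ge\Phi(A_\xi(w)+c_p a)$. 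Strict monotonicity forces $a\equiv0$, so $m_{n,p}\|\nabla r_k\|_{L^p}^p=\langle A_\xi(r_k)\rangle\to0$.
\end{itemize}

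With this change your route coincides with the paper's, up to two cosmetic differences. The paper first transports the limit profile to $U$. It also obtains the limiting directional profile $B(\xi)$ as the weak-$*$ limit of the pushforward measures $\mu_k$ of $|\nabla\tilde\rho_k|^p\,dx$, rather than by Arzel\`a--Ascoli. Your Lipschitz bound $|A_\xi(f)^{1/p}-A_\eta(f)^{1/p}|\le|\xi-\eta|\,\|\nabla f\|_{L^p}$ is a slightly more direct way to reach the same continuous, nonnegative limit.
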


\begin{proof}
Since
\(\|u_k\|_{L^{p^*}(\mathbb R^n)}=1\),
\(\delta_{\rm aff}(u_k)\to0\),
it follows that
\begin{equation}
\label{eq:approx-E}
\mathcal E(u_k)^p\to S_{n,p}^p.
\end{equation}
Applying Lemma~\ref{lem:affine-normalization} and replacing \(u_k\) by
\(T_{B_k,0}u_k\), with \(B_k\in SL(n)\), we may assume, after relabeling, that
\[
\|u_k\|_{L^{p^*}(\mathbb R^n)}=1,
\qquad
\mathcal E(u_k)^p\to S_{n,p}^p,
\qquad
\|\nabla u_k\|_{L^p(\mathbb R^n)}\le C .
\]

We now apply the standard concentration--compactness/profile decomposition for
the critical Sobolev embedding
\[
\dot W^{1,p}(\mathbb R^n)\hookrightarrow L^{p^*}(\mathbb R^n),
\]
see, for instance,
\cite{Lions84,Lions84-1,Struwe84,Solimini95,Jaffard99}. After passing to a
subsequence, there exist profiles \(W^j\in\dot W^{1,p}(\mathbb R^n)\), scales
\(\lambda_{j,k}>0\), centers \(y_{j,k}\in\mathbb R^n\), and dislocations
\[
(g_{j,k}f)(x)
:=
\lambda_{j,k}^{-\frac{n-p}{p}}
f\left(\frac{x-y_{j,k}}{\lambda_{j,k}}\right),
\]
such that, for \(i\neq j\),
\[
\frac{\lambda_{i,k}}{\lambda_{j,k}}
+
\frac{\lambda_{j,k}}{\lambda_{i,k}}
+
\frac{|y_{i,k}-y_{j,k}|}{\lambda_{i,k}+\lambda_{j,k}}
\to\infty .
\]
Moreover, for every finite \(F\subset\mathbb N\),
\begin{equation}
\label{eq:decomp-uk}
u_k=\sum_{j\in F}g_{j,k}W^j+r_k^F,
\end{equation}
and, by the  Br\'ezis--Lieb lemma \cite{BL83},
\begin{equation}
\label{eq:compact-proof-pstar-decoupling}
\|u_k\|_{L^{p^*}(\mathbb R^n)}^{p^*}
=
\sum_{j\in F}\|W^j\|_{L^{p^*}(\mathbb R^n)}^{p^*}
+
\|r_k^F\|_{L^{p^*}(\mathbb R^n)}^{p^*}
+
o_k(1).
\end{equation}
Furthermore, for some exhaustion \(F_N\uparrow\mathbb N\),
\begin{equation}
\label{eq:critical-remainder-p}
\lim_{N\to\infty}
\limsup_{k\to\infty}
\|r_k^{F_N}\|_{L^{p^*}(\mathbb R^n)}=0.
\end{equation}
We shall also use the corresponding directional decoupling
\begin{equation}
\label{eq:compact-proof-directional-decoupling}
A_\xi(u_k)=
\sum_{j\in F}A_\xi(W^j)+
A_\xi(r_k^F)+
o_k(1),
\qquad
\text{uniformly for }\xi\in\mathbb S^{n-1}.
\end{equation}
Indeed, the usual gradient decoupling is stable under any fixed invertible
linear change of variables. Applying it to
\(M_{\xi,\eta}=P_\xi+\eta(I-P_\xi)\), where
\(P_\xi z=(z\cdot\xi)\xi\), and then letting \(\eta\downarrow0\), yields
\eqref{eq:compact-proof-directional-decoupling}. The convergence is uniform in
\(\xi\) by compactness of \(\mathbb S^{n-1}\).

Combining \eqref{eq:intro-affine-Lp}, \eqref{eq:sub-add}, \eqref{eq:approx-E},
\eqref{eq:compact-proof-pstar-decoupling}, and
\eqref{eq:compact-proof-directional-decoupling}, we obtain, for every finite
\(F\subset\mathbb N\),
\begin{equation}
\label{eq:subadd}
\begin{aligned}
S_{n,p}^p=
\lim_{k\to\infty}\mathcal E(u_k)^p
&\ge
\sum_{j\in F}\mathcal E(W^j)^p+
\liminf_{k\to\infty}\mathcal E(r_k^F)^p
\\&\ge
S_{n,p}^p\sum_{j\in F}\|W^j\|_{L^{p^*}(\mathbb R^n)}^p
+
S_{n,p}^p\liminf_{k\to\infty}\|r_k^F\|_{L^{p^*}(\mathbb R^n)}^p
\\
&=
S_{n,p}^p\sum_{j\in F}\|W^j\|_{L^{p^*}(\mathbb R^n)}^p
+S_{n,p}^p
\left(
1-\sum_{j\in F}\|W^j\|_{L^{p^*}(\mathbb R^n)}^{p^*}
\right)^{\frac{p}{p^*}}.
\end{aligned}
\end{equation}
Letting \(F\uparrow\mathbb N\), and using \eqref{eq:critical-remainder-p},
\eqref{eq:subadd}, and the fact that \(0<p/p^*<1\), we arrive at
\[
1\ge
\sum_{j\ge1}\|W^j\|_{L^{p^*}(\mathbb R^n)}^p
=
\sum_{j\ge1}
\left(\|W^j\|_{L^{p^*}(\mathbb R^n)}^{p^*}\right)^{\frac p{p^*}}
\ge
\left(
\sum_{j\ge1}\|W^j\|_{L^{p^*}(\mathbb R^n)}^{p^*}
\right)^{\frac p{p^*}}
=1.
\]
Hence equality can occur only if exactly one of the quantities
\(\|W^j\|_{L^{p^*}(\mathbb R^n)}^{p^*}\) is nonzero. Denote the corresponding profile by
\(W\), and let \(g_k\) be the associated dislocation. Then, by
\eqref{eq:decomp-uk},
\begin{equation}
\label{eq:exactly-one}
g_k^{-1}u_k
=
W+ \rho_k,
\qquad
\rho_k=g_k^{-1}r_k,
\end{equation}
where \(r_k\) denotes the corresponding remainder, and
\[
\rho_k\rightharpoonup0
\quad\text{weakly in }\dot W^{1,p}(\mathbb R^n),
\qquad
\| \rho_k\|_{L^{p^*}(\mathbb R^n)}\to0.
\]
Moreover,
\(\|W\|_{L^{p^*}(\mathbb R^n)}=1\),
\(\mathcal E(W)^p=S_{n,p}^p\).
Thus \(W\) is an extremal for the sharp affine \(p\)-Sobolev inequality.

By the equality classification for the affine \(p\)-Sobolev inequality
\cite{LYZ02},
\[
W=c_k\,T_{\lambda_k A_k,x_k}^{-1}U,
\qquad
c_k\neq0,
\qquad
A_k\in SL(n),
\qquad
\lambda_k>0,
\qquad
x_k\in\mathbb R^n,
\]
where \(c_k\) is chosen such that \(\|W\|_{L^{p^*}(\mathbb R^n)}=1\). Therefore, by
\eqref{eq:exactly-one},
\[
c_k^{-1}T_{\lambda_k A_k,x_k}g_k^{-1}u_k
=
U+\widetilde \rho_k,
\qquad
\widetilde \rho_k
=
c_k^{-1}T_{\lambda_k A_k,x_k} \rho_k .
\]
Since this fixed normalization acts continuously on both \(\dot W^{1,p}(\mathbb R^n)\) and
\(L^{p^*}(\mathbb R^n)\), we have
\[
\widetilde \rho_k\rightharpoonup0
\quad\text{weakly in }\dot W^{1,p}(\mathbb R^n),
\qquad
\|\widetilde \rho_k\|_{L^{p^*}(\mathbb R^n)}\to0.
\]
Furthermore, by \eqref{eq:approx-E}, affine invariance and homogeneity of the
affine energy yield
\begin{equation}
\label{eq:Eto}
\mathcal E(U+\widetilde \rho_k)^p
\to
\mathcal E(U)^p .
\end{equation}
By \eqref{eq:def-Lxi}, the map \(\xi\mapsto L_\xi\) is continuous from the
compact sphere \(\mathbb S^{n-1}\) into \((\dot W^{1,p}(\mathbb R^n))'\), with respect to
the dual norm. Hence its image
\(\{L_\xi:\xi\in\mathbb S^{n-1}\}\)
is compact in \((\dot W^{1,p}(\mathbb R^n))'\), and therefore
\begin{equation}
\label{eq:Lxi-rhok-uniform-zero}
\sup_{\xi\in\mathbb S^{n-1}}|L_\xi(\widetilde \rho_k)|\to0 .
\end{equation}

Since \(p\ge2\), the convexity inequality
\[
|a+b|^p
\ge
|a|^p+p|a|^{p-2}ab+c_p|b|^p
\]
with \(a=\partial_\xi U\) and \(b=\partial_\xi\widetilde \rho_k\), after
integration, gives, together with \eqref{eq:Lxi-rhok-uniform-zero},
\begin{equation}
\label{eq:Axi-ge}
A_\xi(U+\widetilde \rho_k)
\ge
A_\xi(U)+L_\xi(\widetilde \rho_k)+c_pA_\xi(\widetilde \rho_k)
=
A_\xi(U)+c_pA_\xi(\widetilde \rho_k)+o_k(1),
\end{equation}
uniformly for \(\xi\in\mathbb S^{n-1}\).

Assume by contradiction that
\(\|\nabla\widetilde \rho_k\|_{L^p(\mathbb R^n)}\not\to0\). Then, by  \eqref{eq:identify-z} yields
\begin{equation}
\label{eq:Axitobeta}
\left\langle A_\xi(\widetilde \rho_k)\right\rangle_{\mathbb S^{n-1}}
=
m_{n,p}\|\nabla\widetilde \rho_k\|_{L^p(\mathbb R^n)}^p
\to \beta>0.
\end{equation}

Let \(\mu_k\) be the probability measure on \(\mathbb S^{n-1}\) defined by
\[
\mu_k:=
\left(
\frac{\nabla\widetilde \rho_k}{|\nabla\widetilde \rho_k|}
\right)_{\#}
\left(
\frac{|\nabla\widetilde \rho_k|^p\,dx}
{\|\nabla\widetilde \rho_k\|_{L^p(\mathbb R^n)}^p}
\right).
\]
After passing to a subsequence,
\[
\mu_k\rightharpoonup\mu
\qquad\text{weakly as probability measures on }\mathbb S^{n-1}.
\]
Then
\[
A_\xi(\widetilde \rho_k)
=
\|\nabla\widetilde \rho_k\|_{L^p(\mathbb R^n)}^p
\int_{\mathbb S^{n-1}}|\theta\cdot\xi|^p\,d\mu_k(\theta),
\]
and we have the uniform convergence
\begin{equation}
\label{eq:AtoB}
\frac{A_\xi(\widetilde \rho_k)}
{\left\langle A_\xi(\widetilde \rho_k)\right\rangle_{\mathbb S^{n-1}}}
=
\frac1{m_{n,p}}
\int_{\mathbb S^{n-1}}|\theta\cdot\xi|^p\,d\mu_k(\theta)
\to
B(\xi):=
\frac1{m_{n,p}}
\int_{\mathbb S^{n-1}}|\theta\cdot\xi|^p\,d\mu(\theta).
\end{equation}
Moreover,
\[
B\ge0,
\qquad
\left\langle B\right\rangle_{\mathbb S^{n-1}}=1.
\]

Passing to the limit in \eqref{eq:Axi-ge}, and using
\eqref{eq:Axitobeta}--\eqref{eq:AtoB}, we obtain
\[
\begin{aligned}
\liminf_{k\to\infty}
\mathcal E(U+\widetilde \rho_k)^p
&=\alpha_{n,p}
\liminf_{k\to\infty}
\Phi(A_\xi(U+\widetilde \rho_k))\\
&\ge
\alpha_{n,p}\Phi(A_0+c_p\beta B)
>
\alpha_{n,p}\Phi(A_0)
=
\mathcal E(U)^p.
\end{aligned}
\]
This contradicts \eqref{eq:Eto}. Hence
\(\|\nabla\widetilde \rho_k\|_{L^p(\mathbb R^n)}\to0\).
This proves \eqref{eq:conv-w1p}, and hence the proposition.
\end{proof}

\begin{lemma}
\label{lem:affine-modulation-p}
Let \(2\le p<n\). There exist
\(\varepsilon_0>0\), \(R_0>0\), and a modulus of continuity
\(\omega\), with \(\omega(t)\to0\) as \(t\downarrow0\), such that the following
holds.

Assume that \(w\in\dot W^{1,p}(\mathbb R^n)\) satisfies
\(\|\nabla(w-U)\|_{L^p(\mathbb R^n)}\le \varepsilon_0\).
Then there exists \(V\in\mathcal M_{\rm aff}\), with
\(\|\nabla(V-U)\|_{L^p(\mathbb R^n)}\le R_0\),
such that
\begin{equation}
\label{eq:affine-modulus-p}
\|\nabla(w-V)\|_{L^p(\mathbb R^n)}
\le
\omega\!\left(
\|\nabla(w-U)\|_{L^p(\mathbb R^n)}
\right),
\end{equation}
and
\begin{equation}
\label{eq:affine-orthogonality-at-V}
\int_{\mathbb R^n}
|V|^{p^*-2}\zeta\,(w-V)\,dx
=0
\qquad
\forall \zeta\in T_V\mathcal M_{\rm aff}.
\end{equation}
\end{lemma}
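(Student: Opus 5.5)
We obtain $V$ by minimizing a distance over the affine extremal manifold near $U$, and read off the orthogonality \eqref{eq:affine-orthogonality-at-V} from the first-order optimality condition. Parametrize a neighbourhood of $U$ in $\mathcal M_{\rm aff}$ by
\[
V_{c,\lambda,A,x_0}:=c\,\lambda^{\frac{n-p}{p}}U\bigl(\lambda A(x-x_0)\bigr),
\qquad (c,\lambda,A,x_0)\in\mathbb R\times(0,\infty)\times SL(n)\times\mathbb R^n .
\]
This is a smooth map into $\dot W^{1,p}$ (since $U$ is smooth with decay) whose differential at $(1,1,I,0)$ has range $T_U\mathcal M_{\rm aff}$. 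We then minimize the weighted functional
\[
J_w(c,\lambda,A,x_0):=\int_{\mathbb R^n}|V|^{p^*-2}(w-V)^2\,dx-\frac{2}{p^*}\int_{\mathbb R^n}\Bigl(|w|^{p^*}-\ldots\Bigr)
\]
or more simply
\[
\Psi_w(V):=\|w-V\|_{L^{p^*}}^{p^*}.
\]
Differentiating $\Psi_w$ in $V$ along $\zeta$ gives $-p^*\int|w-V|^{p^*-2}(w-V)\zeta$, which is not the stated condition. Instead, we solve \eqref{eq:affine-orthogonality-at-V} directly by the implicit function theorem, which is the standard Bianchi--Egnell route.

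Define $F:\dot W^{1,p}\times\mathcal P\to\mathbb R^{N}$, with $N=\dim T_U\mathcal M_{\rm aff}=2+n+\frac{n(n+1)}2-1$ and $\mathcal P$ the parameter space above, by
\[
F(w,\theta)_i:=\int_{\mathbb R^n}|V_\theta|^{p^*-2}\zeta_i^{\theta}\,(w-V_\theta)\,dx,
\]
where $\{\zeta_i^\theta\}$ is the basis of $T_{V_\theta}\mathcal M_{\rm aff}$ obtained by differentiating the parametrization. Then $F(U,\theta_0)=0$. The map $F$ is $C^1$: the $w$-dependence is linear and bounded by Hölder and Sobolev, since $|V|^{p^*-2}\zeta_i\in L^{(p^*)'}$. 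The $\theta$-derivative at $(U,\theta_0)$ is $-\bigl(\int U^{p^*-2}\zeta_i\zeta_j\bigr)_{ij}$, the Gram matrix of the basis $\{U,Z_0,\partial_iU,x\cdot B\nabla U\}$ in $L^2(U^{p^*-2})$. This matrix is invertible precisely because these functions are linearly independent. That independence we check by the same physical spherical-harmonic sector splitting used in Section~\ref{sec:sector-decomposition-affine-p}: radial parts, degree-one parts, and the trace-free degree-two parts $x\cdot B\nabla U=U'(r)r\,\theta\cdot B\theta$, which are nonzero for $B\ne0$. The implicit function theorem then gives $\varepsilon_0>0$ and a continuous map $w\mapsto\theta(w)$ on $\{\|\nabla(w-U)\|_{L^p}\le\varepsilon_0\}$ with $F(w,\theta(w))=0$ and $|\theta(w)-\theta_0|\le C\|\nabla(w-U)\|_{L^p}$. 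Setting $V=V_{\theta(w)}$ yields \eqref{eq:affine-orthogonality-at-V}.

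For \eqref{eq:affine-modulus-p} we use continuity of the parametrization into $\dot W^{1,p}$:
\[
\|\nabla(w-V)\|_{L^p}\le\|\nabla(w-U)\|_{L^p}+\|\nabla(U-V_{\theta(w)})\|_{L^p}.
\]
The second term is bounded by $\omega_0(C\|\nabla(w-U)\|_{L^p})$, where $\omega_0$ is the modulus of continuity of $\theta\mapsto V_\theta$ at $\theta_0$; in fact $\omega_0$ is Lipschitz, since the parametrization is smooth. This gives the modulus $\omega$, and $R_0$ can be taken as $\omega(\varepsilon_0)$.

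The main obstacle is the $C^1$ regularity of $F$ in $\theta$ when $p^*-2<1$, that is, when $n>3p$ roughly. Then $|V|^{p^*-2}$ is only Hölder in $V$. Positivity of $V_\theta$ near $\theta_0$ (as $c$ stays near $1$) removes the absolute value, and $V^{p^*-2}$ is then smooth in $\theta$ pointwise. What remains is to justify differentiation under the integral using the uniform decay of $V_\theta,\zeta_i^\theta,\partial_\theta\zeta_i^\theta$ for $\theta$ in a compact neighbourhood. If this causes trouble, we would only need continuity of $F$ plus invertibility at $\theta_0$, via a Brouwer degree argument replacing the implicit function theorem.
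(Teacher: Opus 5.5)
Your argument is correct, but it is not the paper's route. The paper obtains $V$ variationally, following Figalli--Zhang. It minimizes, over a small closed ball of parameters with rotations suppressed, the functional
\[
\mathscr F_w(a)=\frac1{p^*}\int_{\mathbb R^n}|\Xi(a)|^{p^*}\,dx-\frac1{p^*-1}\int_{\mathbb R^n}|\Xi(a)|^{p^*-2}\Xi(a)\,w\,dx .
\]
Its derivative along a tangent direction $\zeta$ is exactly $\int|\Xi(a)|^{p^*-2}\zeta\,(\Xi(a)-w)\,dx$, so this is the functional you were looking for when you discarded $\|w-V\|_{L^{p^*}}^{p^*}$. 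Since $\mathscr F_U$ is minimized only where $\Xi(a)=U$ (H\"older plus Young), the minimizer is interior for $w$ close to $U$, and the orthogonality is simply its Euler--Lagrange equation.

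That route needs only first-order differentiability of $a\mapsto\Xi(a)$ into $L^{p^*}$ and compactness of the parameter ball. It gives only a qualitative modulus $\omega$, and no uniqueness of $V$. Your implicit-function route asks for more: $C^1$ dependence of $\theta\mapsto V_\theta^{p^*-2}\zeta_i^\theta$ in $L^{(p^*)'}$, and invertibility of the Gram matrix, which your sector argument does establish. In return it yields a unique, Lipschitz $\theta(w)$ and a linear $\omega$.

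There are two things to tidy. First, as written your parameter space contains all of $SL(n)$, so it has dimension $n^2+n+1>N$. Because $U$ is radial, the $n(n-1)/2$ antisymmetric directions give zero tangent vectors at $U$. Hence $D_\theta F(U,\theta_0)$ is not the square invertible Gram matrix you describe. Restrict to an $N$-dimensional slice, e.g. $A=e^{B}$ with $B=B^T$ and $\operatorname{tr}B=0$, just as the paper suppresses rotations.

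Second, the obstacle you flag for $p^*-2<1$ is closed by pointwise bounds. Locally uniformly in $\theta$ one has $|\partial_\theta V_\theta|+|\zeta_i^\theta|\le C\,V_\theta$, which follows from $(1+r)|U'|\le CU$. This gives $V_\theta^{p^*-3}|\partial_\theta V_\theta|\,|\zeta_i^\theta|\le CV_\theta^{p^*-1}\in L^{(p^*)'}$. The remaining term $V_\theta^{p^*-2}\partial_\theta\zeta_i^\theta$ contains $\nabla^2U$, which for $p>2$ is singular like $|x|^{p'-2}$ at the center. This is a local issue, not only a decay issue, but the singularity is integrable to the power $(p^*)'$ needed.
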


\begin{proof}
For
\[
a=(\alpha,y,\lambda,B)
\in
\mathbb R\times\mathbb R^n\times\mathbb R\times\operatorname{Sym}_0(n)
\]
near \(0\), set
\[
\Xi(a)=
(1+\alpha)\,
T_{(1+\lambda)(I+B),y}^{-1}U .
\]
We suppress rotational parameters, since the radial profile \(U\) is invariant
under rotations. Thus \(\Xi\) provides a local parametrization of
\(\mathcal M_{\rm aff}\) near \(U\), modulo the redundant rotational
directions.

For \(w\) sufficiently close to \(U\), consider the minimization problem
\[
\mathscr F_w(a)
:=
\frac{1}{p^*}\int_{\mathbb R^n}|\Xi(a)|^{p^*}\,dx
-
\frac1{p^*-1}
\int_{\mathbb R^n}
|\Xi(a)|^{p^*-2}\Xi(a)\,w\,dx
\]
over a sufficiently small closed ball in parameter space. Exactly as in
\cite[Lemma~4.1]{FZ22}, once
\(\|\nabla(w-U)\|_{L^p(\mathbb R^n)}\) is sufficiently small, the minimizer is
an interior point and depends continuously on \(w\). Writing this minimizer as
\(a_w\), and setting \(V=\Xi(a_w)\), we obtain \eqref{eq:affine-modulus-p}.
Since \(a_w\) is an interior minimizer, the first variation of
\(\mathscr F_w\) vanishes along every tangent direction. It follows that, for
every \(\zeta\in T_V\mathcal M_{\rm aff}\), one has
\eqref{eq:affine-orthogonality-at-V}. This proves the lemma.
\end{proof}

\subsection{Proof of Theorem~\ref{thm:intro-main-affine-p}}

\begin{proof}[Proof of Theorem~\ref{thm:intro-main-affine-p}]
We begin with the local bound. More precisely, we claim that there exist
\(\delta_0>0\) and \(c_0>0\), depending only on \(n,p\), such that
\begin{equation}
\label{eq:local-affine-p-stability}
        d_{\rm aff}(u,\mathcal M_{\rm aff}) \le\delta_0
\quad\Longrightarrow\quad
\widehat \delta_{\rm aff}(u)\ge c_0  d_{\rm aff}(u,\mathcal M_{\rm aff})^p,
\end{equation}
where
\begin{equation}
\label{eq:affine-distance-p}
d_{\rm aff}(u,\mathcal M_{\rm aff})
=
\inf_{\substack{a\in\mathbb R\\ A\in SL(n),\,\lambda>0,\,
x_0\in\mathbb R^n}}
\frac{
\left\|
\nabla\left(T_{\lambda A,x_0}u-aU\right)
\right\|_{L^p}
}{
\left\|
\nabla\left(T_{\lambda A,x_0}u\right)
\right\|_{L^p}
}.
\end{equation}
Here
\[
\widehat  \delta_{\rm aff}(u)
=
\frac{\mathcal E(u)}
{S_{n,p}\|u\|_{L^{p^*}(\mathbb R^n)}}-1,
\qquad
u\in\dot W^{1,p}(\mathbb R^n)\backslash\{0\}.
\]

Set \( d= d_{\rm aff}(u,\mathcal M_{\rm aff})\). The case \( d=0\) is trivial, so we may assume
\(0< d\le\delta_0\). Choose admissible parameters
\(a_0\in\mathbb R\), \(A_0\in SL(n)\), \(\lambda_0>0\), and
\(x_0\in\mathbb R^n\), and write
\( T_0=T_{\lambda_0A_0,x_0}\),
so that
\begin{equation}
\label{eq:near-minimizer-relative-distance}
\|\nabla(T_0u-a_0U)\|_{L^p(\mathbb R^n)}
\le
2 d\,\|\nabla(T_0u)\|_{L^p(\mathbb R^n)}.
\end{equation}
If \(\delta_0<1/4\), then necessarily \(a_0\ne0\). Set \(w=a_0^{-1}T_0u\).
Then \eqref{eq:near-minimizer-relative-distance} yields
\[
\|\nabla(w-U)\|_{L^p(\mathbb R^n)}
\le
\frac{2 d}{1-2 d}\|\nabla U\|_{L^p(\mathbb R^n)}.
\]
After decreasing \(\delta_0\) if necessary,
Lemma~\ref{lem:affine-modulation-p} applies to \(w\). Thus there exists
\(V\in\mathcal M_{\rm aff}\), close to \(U\),
such that \eqref{eq:affine-modulus-p} and \eqref{eq:affine-orthogonality-at-V} hlod.
Write \(V=b\,T_{\lambda A,x_1}^{-1}U\), \(b\ne0\),
with \(\lambda A\) close to \(I\), and define \(\phi\) by
\[
b^{-1}T_{\lambda A,x_1}w=U+\phi.
\]
After changing variables in the orthogonality condition \eqref{eq:affine-orthogonality-at-V}, we obtain
\[
\phi\perp_{L^2(\mathbb R^n,U^{p^*-2}dx)}T_U\mathcal M_{\rm aff}.
\]
Moreover, by \eqref{eq:affine-modulus-p},
\[
\|\nabla\phi\|_{L^p(\mathbb R^n)}
\le
C\|\nabla(w-V)\|_{L^p(\mathbb R^n)}
\le
C\omega(\|\nabla(w-U)\|_{L^p(\mathbb R^n)}).
\]
Hence, after possibly decreasing \(\delta_0\), the quantity
\(\|\nabla\phi\|_{L^p(\mathbb R^n)}\) lies below all the smallness thresholds
needed below.

If \(\phi=0\), then the conclusion is immediate. Assume therefore that
\(\phi\ne0\), and set
\[
\varepsilon:=\|\nabla\phi\|_{L^p(\mathbb R^n)},
\qquad
\phi:=\varepsilon^{-1}\phi .
\]
Then
\[
\|\nabla\phi\|_{L^p(\mathbb R^n)}=1,
\qquad
\phi\perp_{L^2(\mathbb R^n,U^{p^*-2}dx)}T_U\mathcal M_{\rm aff}.
\]

Fix \(0<\kappa\le\kappa_0\), where \(\kappa_0\) is given by
Proposition~\ref{prop:calibrated-affine-spectral}.
By \eqref{eq:D-ge-2}, affine invariance and homogeneity of the deficit,
\begin{equation}
\label{eq:delta-lower-by-psi}
\begin{aligned}
\widehat  \delta_{\rm aff}(u)
=
\widehat  \delta_{\rm aff}(U+\phi)
&\ge
c
\left[
\frac{\mathcal E(U+\phi)^p}
{S_{n,p}^p\|U+\phi\|_{L^{p^*}(\mathbb R^n)}^p}
-1
\right]\\
& \ge
c\,\delta_{\rm aff}(U+\phi)
=c\delta_{\rm aff}(U+\varepsilon\phi)
\ge
c\varepsilon^p  =
c\|\nabla\phi\|_{L^p(\mathbb R^n)}^p.
\end{aligned}
\end{equation}

Finally, since
\[
w=a_0^{-1}T_0u,
\qquad
b^{-1}T_{\lambda A,x_1}w=U+\phi,
\]
we have
\[
\begin{aligned}
d&\le
\frac{
\|\nabla(T_{\lambda A,x_1}T_0u-a_0bU)\|_{L^p(\mathbb R^n)}
}{
\|\nabla(T_{\lambda A,x_1}T_0u)\|_{L^p(\mathbb R^n)}
}
=
\frac{
\|\nabla\phi\|_{L^p(\mathbb R^n)}
}{
\|\nabla(U+\phi)\|_{L^p(\mathbb R^n)}
}
\le
C\|\nabla\phi\|_{L^p(\mathbb R^n)}.
\end{aligned}
\]
Combining this with \eqref{eq:delta-lower-by-psi} proves
\eqref{eq:local-affine-p-stability}.

We now turn to the global estimate. Assume, for contradiction, that
\eqref{eq:intro-main-affine-p} fails. Then there exists a sequence
\(u_k\ne0\) such that
\begin{equation}
\label{eq:false-global}
\widehat \delta_{\rm aff}(u_k)
<
\frac1k\,  d_{\rm aff}(u_k,\mathcal M_{\rm aff})^p.
\end{equation}
Since \( d_{\rm aff}(u_k,\mathcal M_{\rm aff})\le1\), it follows that
\(  \widehat \delta_{\rm aff}(u_k)\to0\).

Both \(  \widehat\delta_{\rm aff}\) and \( d\) are invariant under multiplication by
nonzero scalars. Hence we may normalize so that
\(\|u_k\|_{L^{p^*}(\mathbb R^n)}=1\).
By Proposition~\ref{prop:qualitative-affine-compactness}, after passing to a
subsequence, there exist admissible affine transformations and nonzero scalars
such that
\[
c_k^{-1}T_{\lambda_kM_k,x_k}u_k
\to U
\qquad
\text{strongly in }\dot W^{1,p}(\mathbb R^n).
\]
In particular,
\( d_{\rm aff}(u_k,\mathcal M_{\rm aff})\to0\).
Hence, for all sufficiently large \(k\), the local estimate
\eqref{eq:local-affine-p-stability} yields
\[
\widehat \delta_{\rm aff}(u_k)
\ge
c_0 d_{\rm aff}(u_k,\mathcal M_{\rm aff})^p,
\]
which contradicts \eqref{eq:false-global}. This proves
\eqref{eq:intro-main-affine-p}.

It remains to show that the exponent \(p\) is optimal. Let
\(0\ne\zeta\in C_c^\infty(\mathbb R^n)\), and set
\[
\zeta_R(x):=\zeta(x-Re_1),
\qquad
u_{\varepsilon,R}:=U+\varepsilon\zeta_R .
\]
As \(R\to\infty\), the support of \(\zeta_R\) escapes to infinity relative to
the bubble \(U\). Hence, by the Br\'ezis--Lieb lemma~\cite{BL83},
\[
\begin{aligned}
&\|\nabla u_{\varepsilon,R}\|_{L^p(\mathbb R^n)}^p
=
\|\nabla U\|_{L^p(\mathbb R^n)}^p+
\varepsilon^p
\|\nabla\zeta\|_{L^p(\mathbb R^n)}^p
+o_R(1).\\
&\|u_{\varepsilon,R}\|_{L^{p^*}(\mathbb R^n)}^p
=
\|U\|_{L^{p^*}(\mathbb R^n)}^p+
O(\varepsilon^{p^*})+
o_R(1).
\end{aligned}
\]
Thus we obtain
\begin{equation}
\label{eq:Sob-deficit-sharpness}
\begin{aligned}
c  \widehat \delta_{\rm aff, p}(u_{\varepsilon,R}) \le \delta_{\rm aff, p}(u_{\varepsilon,R})
&\le
\|\nabla u_{\varepsilon,R}\|_{L^p(\mathbb R^n)}^p-
S_{n,p}^p\|u_{\varepsilon,R}\|_{L^{p^*}(\mathbb R^n)}^p\\
&\le
\varepsilon^p
\|\nabla\zeta\|_{L^p(\mathbb R^n)}^p
-S_{n,p}^p O(\varepsilon^{p^*})
+o_R(1)                                      \\
&\le
C\varepsilon^p+o_R(1).
\end{aligned}
\end{equation}
It remains to compare the affine distance with \(\varepsilon\). By
\eqref{eq:affine-distance-p}, for each fixed \(R\),
\begin{align*}
d_{\rm aff}(U+\varepsilon\zeta_R,\mathcal M_{\rm aff})
&=
\frac{\varepsilon}{\|\nabla U\|_{L^p(\mathbb R^n)}}
\operatorname{dist}_{\dot W^{1,p}}
\bigl(\zeta_R,T_U\mathcal \mathcal M_{\rm aff}\bigr)
+o_R(\varepsilon),\\
&=
\frac{\varepsilon}{\|\nabla U\|_{L^p(\mathbb R^n)}}
\bigl(\|\nabla\zeta\|_{L^p(\mathbb R^n)}+o_R(1)\bigr)
+o_R(\varepsilon),
\qquad \varepsilon\downarrow0 .
\end{align*}
Here the distance on the right-hand side is taken with respect to the
\(\dot W^{1,p}(\mathbb R^n)\)-seminorm.

After normalizing \(\|\nabla\zeta\|_{L^p(\mathbb R^n)}=1\), choosing \(R\) large
and then \(\varepsilon>0\) sufficiently small, we obtain
\begin{equation}
\label{eq:sharpness-distance-lower}
d_{\rm aff}(U+\varepsilon\zeta_R,\mathcal M_{\rm aff})
\ge c\,\varepsilon .
\end{equation}

On the other hand, by \eqref{eq:Sob-deficit-sharpness}, taking \(R\) large and
then letting \(\varepsilon\downarrow0\), and using
\eqref{eq:sharpness-distance-lower}, we obtain
\[
\widehat  \delta_{\rm aff}(U+\varepsilon\zeta_R)
\le
C\,d_{\rm aff}(U+\varepsilon\zeta_R,\mathcal M_{\rm aff})^p .
\]
Thus the exponent \(p\) is optimal: no estimate with exponent
\(\alpha<p\) can hold uniformly.
\end{proof}

\section{Spectral decomposition of the affine \(p\)-Sobolev Hessian}
\label{sec:sector-decomposition-affine-p}

The purpose of this section is to identify the nullspace of the affine
Hessian \(Q_{\rm aff,p}\). We shall prove that
\[
T_U\mathcal \mathcal M_{\rm aff}
=
\ker Q_{\rm aff,p}
=
\operatorname{span}
\left\{
U,\ Z_0,\ \partial_{x_1}U,\dots,\partial_{x_n}U,\ x\cdot B\nabla U
\right\},
\]
where
\[
B=B^T,\ \operatorname{tr}B=0,
\qquad
Z_0=\frac{n-p}{p}U+x\cdot\nabla U .
\]
The radial and translation sectors rely on the classical nondegeneracy theory
for the critical \(p\)-Sobolev bubble \cite{PV21,FN19,FZ22}. The trace-free
degree-two affine modes are produced by the variance correction, and the
remaining sectors are handled by a Funk--Hecke computation, in the spirit of
the affine fractional Hilbertian analysis in \cite{FLZ26-1}.

Throughout this section, the kernels of \(Q_{\rm Sob,p}\) and
\(Q_{\rm aff,p}\) are taken in the space \(\mathcal Z_U\).

\subsection{Physical angular sectors}\label{subsec:phy}

Let \(\phi\in\mathcal Z_U\). Since \(U=U(r)\) is radial, the spherical
harmonic decomposition is compatible with the quadratic structure induced by
\(\mathcal Z_U\).

For a.e. \(r>0\), the map
\(\theta\mapsto \phi(r,\theta)\) belongs to \(L^2(\mathbb S^{n-1})\). Since
the spherical harmonics form a complete orthogonal basis of
\(L^2(\mathbb S^{n-1})\), we may write
\begin{equation}
\label{eq:decom-phi}
\phi(r,\theta)=
\sum_{\ell=0}^{\infty}
\sum_{m=1}^{d_\ell}
f_{\ell,m}(r)Y_{\ell,m}(\theta),
\qquad x=r\theta,
\end{equation}
where the expansion is understood in the angular
\(L^2(\mathbb S^{n-1})\)-sense for a.e. \(r>0\), and
\[
f_{\ell,m}(r)=
\left\langle
\phi(r,\cdot),Y_{\ell,m}
\right\rangle_{\mathbb S^{n-1}}=
\frac1{|\mathbb S^{n-1}|}
\int_{\mathbb S^{n-1}}
\phi(r,\theta)Y_{\ell,m}(\theta)\,d\sigma(\theta) .
\]
Here
\begin{equation}
\label{eq:sphere-harmonics}
-\Delta_{\mathbb S^{n-1}}Y_{\ell,m}
=
\lambda_\ell Y_{\ell,m},
\qquad
\lambda_\ell=\ell(\ell+n-2),
\end{equation}
and
\begin{equation}
\label{eq:normalization-harmonics}
\left\langle Y_{\ell,m}Y_{\ell,m'}\right\rangle_{\mathbb S^{n-1}}
=\delta_{mm'}.
\end{equation}

In polar coordinates, in the weak sense,
\begin{equation}
\label{eq:nabla-phi}
\begin{aligned}
\nabla\phi
& =
\theta\,\partial_r\phi+
\frac1r\,\nabla_{\mathbb S^{n-1}}\phi\\
&= \theta\,\sum_{\ell,m}f'_{\ell,m}(r)Y_{\ell,m}(\theta)+\frac1r\,\sum_{\ell,m}f_{\ell,m}(r)
\nabla_{\mathbb S^{n-1}}Y_{\ell,m}(\theta).
\end{aligned}
\end{equation}
Therefore, by \eqref{eq:sphere-harmonics}, \eqref{eq:normalization-harmonics}
and \eqref{eq:nabla-phi},
\begin{align*}
\int_{\mathbb R^n}
&|U'|^{p-2}
\left(
|\nabla\phi|^2+
(p-2)(\theta\cdot\nabla\phi)^2
\right)\,dx\\
&\qquad=
\int_0^\infty
|U'|^{p-2}
\int_{\mathbb S^{n-1}}
\left[
(p-1)|\partial_r\phi|^2+
\frac1{r^2}
|\nabla_{\mathbb S^{n-1}}\phi|^2
\right]
d\sigma\, r^{n-1}\,dr\\
&\qquad=|\mathbb S^{n-1}|
\sum_{\ell=0}^{\infty}\sum_{m=1}^{d_\ell}
\int_0^\infty
|U'|^{p-2}
\left[
(p-1)|f'_{\ell,m}|^2
+
\lambda_\ell\frac{f_{\ell,m}^2}{r^2}
\right]
r^{n-1}\,dr .
\end{align*}
Similarly,
\[
\int_{\mathbb R^n}
U^{p^*-2}\phi^2\,dx
=
|\mathbb S^{n-1}|
\sum_{\ell=0}^{\infty}\sum_{m=1}^{d_\ell}
\int_0^\infty
U^{p^*-2}f_{\ell,m}^2 r^{n-1}\,dr .
\]
Finally, since every nonconstant spherical harmonic has zero mean,
\[
\int_{\mathbb R^n}U^{p^*-1}\phi\,dx
=
|\mathbb S^{n-1}|
\int_0^\infty
U^{p^*-1}f_{0,1}(r)r^{n-1}\,dr .
\]
Accordingly, by \eqref{eq:classical-p-homogeneous-Hessian},
\[
Q_{\rm Sob,p}(\phi)
=
Q_{\rm Sob,p}^{(0)}(f_{0,1})
+
\sum_{\ell=1}^{\infty}\sum_{m=1}^{d_\ell}
Q_{\rm Sob,p}^{(\ell)}(f_{\ell,m}),
\]
where, for \(\ell\ge1\),
\begin{equation}
\label{eq:classical-p-sector}
\begin{aligned}
Q_{\rm Sob,p}^{(\ell)}(f)
&:=
|\mathbb S^{n-1}|
\int_0^\infty |U'|^{p-2}
\left[
(p-1)|f'|^2+
\lambda_\ell\frac{f^2}{r^2}
\right]r^{n-1}\,dr
\\
&\quad-
|\mathbb S^{n-1}|
(p^*-1)\Lambda
\int_0^\infty U^{p^*-2}f^2r^{n-1}\,dr ,
\end{aligned}
\end{equation}
and in the radial sector
\begin{equation}
\label{eq:classical-p-radial-sector}
\begin{aligned}
Q_{\rm Sob,p}^{(0)}(f)
&:=
|\mathbb S^{n-1}|
\int_0^\infty |U'|^{p-2}(p-1)|f'|^2 r^{n-1}\,dr\\
&\,-
|\mathbb S^{n-1}|
(p^*-1)\Lambda
\int_0^\infty U^{p^*-2}f^2 r^{n-1}\,dr
\\&\,+
(p^*-p)\Lambda \left(  |\mathbb S^{n-1}|
\int_0^\infty U^{p^*}r^{n-1}\,dr\right)^{-1}
\left(
|\mathbb S^{n-1}|
\int_0^\infty U^{p^*-1}f r^{n-1}\,dr
\right)^2 .
\end{aligned}
\end{equation}

\subsection{Classical nondegeneracy}
\label{subsec:classical-nondegeneracy}

We next recall the nondegeneracy result for the critical \(p\)-Sobolev
bubble, due to Pistoia--Vaira~\cite{PV21}; see also
Figalli--Neumayer~\cite[Proposition~3.1]{FN19} and
Figalli--Zhang~\cite{FZ22}. For the homogeneous Hessian
\eqref{eq:classical-p-homogeneous-Hessian}, one has
\begin{equation*}
\ker Q_{\rm Sob,p}=
\operatorname{span}
\left\{
U,\ Z_0,\ \partial_{x_1}U,\dots,\partial_{x_n}U
\right\}.
\end{equation*}
Here \(U\) is the amplitude mode, \(Z_0\) is the dilation mode, and the
functions \(\partial_{x_j}U\) are the translation modes.

Since \(U\) is radial, the angular projections preserve \(\mathcal Z_U\), and
the decomposition from Subsection~\ref{subsec:phy} separates the physical
angular sectors. The modes
\(U\), \(Z_0\)
belong to the radial sector. Moreover,
\[
\partial_{x_j}U(x)=U'(r)\theta_j,
\qquad j=1,\dots,n,
\]
and \(\theta_1,\dots,\theta_n\) span the first spherical harmonic sector.
Therefore, for radial components,
\begin{equation}
\label{eq:redial-kernel}
Q_{\rm Sob,p}[\phi_0]=0
\quad\Longleftrightarrow\quad
\phi_0\in\operatorname{span}\{U,Z_0\},
\end{equation}
and in the first angular sector,
\begin{equation}
\label{eq:1st-kernel}
Q_{\rm Sob,p}[\phi_1]=0
\quad\Longleftrightarrow\quad
\phi_1\in
\operatorname{span}
\left\{
\partial_{x_1}U,\dots,\partial_{x_n}U
\right\}.
\end{equation}
Here
\[
\phi_\ell(r,\theta)
=
\sum_{m=1}^{d_\ell}
f_{\ell,m}(r)Y_{\ell,m}(\theta),
\qquad
Q_{\rm Sob,p}[\phi_\ell]
:=
\sum_{m=1}^{d_\ell}
Q_{\rm Sob,p}^{(\ell)}(f_{\ell,m}).
\]
For every \(\ell\ge2\), the classical \(p\)-Sobolev Hessian has trivial kernel
in the \(\ell\)-th physical sector.

\subsection{Funk--Hecke analysis of the  affine correction \(\mathcal R_p(\phi)\)}

We now turn to the genuinely affine part of the Hessian. Since the correction \(\mathcal R_p\)
is defined through directional averages, its action on the physical angular
decomposition is encoded by the corresponding Funk--Hecke coefficients.

By \eqref{eq:nabla-phi},
\begin{equation}
\label{eq:xiphi}
\begin{aligned}
\partial_\xi\phi&=
(\theta\cdot\xi)\partial_r\phi
+
\frac1r
\xi\cdot\nabla_{\mathbb S^{n-1}}\phi\\&=
\sum_{\ell=0}^{\infty}
\sum_{m=1}^{d_\ell}
\left[
f'_{\ell,m}(r)Y_{\ell,m}(\theta)(\theta\cdot\xi)+
\frac{f_{\ell,m}(r)}{r}
\xi\cdot\nabla_{\mathbb S^{n-1}}Y_{\ell,m}(\theta)
\right].
\end{aligned}
\end{equation}

Substituting \eqref{eq:xiU}, \eqref{eq:decom-phi} and \eqref{eq:xiphi} into
\eqref{eq:def-Lxi}, we obtain
\begin{equation}
\label{eq:Lxi-local-full-before-FH}
\begin{aligned}
L_\xi(\phi)
&= p\int_{\mathbb R^n}
|\partial_\xi U|^{p-2}\partial_\xi U\,\partial_\xi\phi\,dx \\
&=p\sum_{\ell=0}^{\infty}
\sum_{m=1}^{d_\ell}
\int_0^\infty
|U'|^{p-2}U' r^{n-1}
\Bigg[
f'_{\ell,m}
\int_{\mathbb S^{n-1}}
|\theta\cdot\xi|^pY_{\ell,m}(\theta)\,d\sigma(\theta)
\\
&\qquad\qquad\qquad\qquad
+
\frac{f_{\ell,m}}{r}
\int_{\mathbb S^{n-1}}
|\theta\cdot\xi|^{p-2}(\theta\cdot\xi)
\,\xi\cdot\nabla_{\mathbb S^{n-1}}Y_{\ell,m}(\theta)\,d\sigma(\theta)
\Bigg]dr .
\end{aligned}
\end{equation}
We next compute the two angular integrals. By the Funk--Hecke identity
\cite[Theorem~1.2.9]{DX13},
\begin{equation}
\label{eq:p-FH-identity}
\frac1{|\mathbb S^{n-1}|}
\int_{\mathbb S^{n-1}}
|\theta\cdot\xi|^pY_{\ell,m}(\theta)\,d\sigma(\theta)
=
d_{\ell,p}Y_{\ell,m}(\xi).
\end{equation}

The second angular term is reduced to the same coefficient by integration by
parts on the sphere. Indeed,
\[
\nabla_{\mathbb S^{n-1}}|\theta\cdot\xi|^p
=
p|\theta\cdot\xi|^{p-2}(\theta\cdot\xi)
\bigl(\xi-(\theta\cdot\xi)\theta\bigr),
\]
and therefore, by \eqref{eq:sphere-harmonics} and
\eqref{eq:normalization-harmonics},
\begin{equation}
\label{eq:identify-angular}
\int_{\mathbb S^{n-1}}
|\theta\cdot\xi|^{p-2}(\theta\cdot\xi)
\,\xi\cdot\nabla_{\mathbb S^{n-1}}Y_{\ell,m}\,d\sigma
=
|\mathbb S^{n-1}|\frac{\lambda_\ell}{p} d_{\ell,p}Y_{\ell,m}(\xi).
\end{equation}

Substituting \eqref{eq:p-FH-identity} and \eqref{eq:identify-angular} into
\eqref{eq:Lxi-local-full-before-FH} yields
\begin{equation*}
L_\xi(\phi)
=p
\sum_{\ell=0}^{\infty}
\sum_{m=1}^{d_\ell}
        d_{\ell,p}Y_{\ell,m}(\xi)R_\ell(f_{\ell,m}),
\end{equation*}
where
\begin{equation}
\label{eq:Rell-local-p}
R_\ell(f)
:=
|\mathbb S^{n-1}|
\int_0^\infty
|U'|^{p-2}U'
\left(f'+\frac{\lambda_\ell}{p}\frac{f}{r}\right)
r^{n-1}\,dr .
\end{equation}
In particular, for a fixed component \(f(r)Y_{\ell,m}(\theta)\),
\begin{equation*}
L_\xi(fY_{\ell,m})
=
p d_{\ell,p}Y_{\ell,m}(\xi)R_\ell(f).
\end{equation*}

The radial contribution \(\ell=0\) is independent of \(\xi\), since
\(Y_{0,1}\equiv1\). Hence it contributes to
\(\langle L_\xi(\phi)\rangle_{\mathbb S^{n-1}}\), but not to the variance.
Using \eqref{eq:normalization-harmonics},
it follows from \eqref{eq:def-varxi} that
\begin{equation}
\label{eq:variance-Lxi-sector-block}
\operatorname{Var}_{\xi\in\mathbb S^{n-1}}
\bigl(L_\xi(\phi)\bigr)
=
p^2
\sum_{\ell=1}^{\infty}
\sum_{m=1}^{d_\ell}
        d_{\ell,p}^2R_\ell(f_{\ell,m})^2 .
\end{equation}

It remains to compute the coefficients \( d_{\ell,p}\).

\medskip
Set \(\mu=\frac{n-2}{2}>0\).
By \cite[Theorem~1.2.9]{DX13}, the Funk--Hecke coefficient in
\eqref{eq:p-FH-identity} is
\begin{equation*}
        d_{\ell,p}
=
\frac{|\mathbb S^{n-2}|}
{|\mathbb S^{n-1}|\,C_\ell^\mu(1)}
\int_{-1}^{1}
|t|^p C_\ell^\mu(t)(1-t^2)^{\mu-\frac12}\,dt .
\end{equation*}
By \cite[Section~10.9~(16)]{EMOT81}, namely
\(C_\ell^\mu(-t)=(-1)^\ell C_\ell^\mu(t)\),
we immediately obtain
\begin{equation}
\label{eq:rho-odd-zero-p}
        d_{\ell,p}=0
\qquad
\text{for every odd }\ell,
\end{equation}

We now compute the even coefficients. Let \(\ell=2m\). By
\cite[Section~10.9~(21)]{EMOT81},
\[
\begin{aligned}
C_{2m}^{\mu}(\sqrt u)
&=
(-1)^m\frac{(\mu)_m}{m!}
{}_2F_1
\left(
-m,\ m+\mu;\ \frac12;\ u
\right)  \\
&=
(-1)^m\frac{(\mu)_m}{m!}
\sum_{k=0}^{m}
\frac{(-m)_k(m+\mu)_k}{\left(\frac12\right)_k}
\frac{u^k}{k!}.
\end{aligned}
\]
Here \((a)_0:=1\) and \((a)_k:=a(a+1)\cdots(a+k-1)\), \(k\ge1\),
denotes the Pochhammer symbol.
Then
\begin{align*}
J_{2m,p}&:=\int_{-1}^{1}|t|^p C_{2m}^{\mu}(t)(1-t^2)^{\mu-\frac12}\,dt \\
&=\int_0^1
u^{\frac{p-1}{2}}
(1-u)^{\mu-\frac12}
C_{2m}^{\mu}(\sqrt u)\,du\\
&=
(-1)^m\frac{(\mu)_m}{m!}
\int_0^1
u^{\frac{p-1}{2}}(1-u)^{\mu-\frac12}
{}_2F_1
\left(
-m,\ m+\mu;\ \frac12;\ u
\right)\,du\\
&=
(-1)^m\frac{(\mu)_m}{m!}
B(\frac{p+1}{2},\mu+\frac12)\,
{}_3F_2
\left(
\begin{matrix}
-m,\ m+\mu,\ \frac{p+1}{2}\\
\frac12,\ \mu+\frac{p}{2}+1
\end{matrix}
;1
\right)\\
&=(-1)^m
\frac{(\mu)_m}{m!}
B(\frac{p+1}{2},\mu+\frac12)\,
\frac{
\left(\mu+\frac12\right)_m
\left(-\frac p2\right)_m
}{
\left(\frac12\right)_m
\left(\mu+\frac p2+1\right)_m
}.
\end{align*}
In the last step we use Saalsch\"utz's summation formula
\cite[Theorem~2.2.6]{AAR99}. In the present parameters,
\[
\begin{aligned}
&{}_3F_2
\left(
\begin{matrix}
-m,\ m+\mu,\ \frac{p+1}{2}\\
\frac12,\ \mu+\frac p2+1
\end{matrix}
;1
\right)
=
\sum_{k=0}^{m}
\frac{
(-m)_k(m+\mu)_k\left(\frac{p+1}{2}\right)_k
}{
\left(\frac12\right)_k
\left(\mu+\frac p2+1\right)_k
}
\frac1{k!}
=
\frac{
\left(\mu+\frac12\right)_m
\left(-\frac p2\right)_m
}{
\left(\frac12\right)_m
\left(\mu+\frac p2+1\right)_m
}.
\end{aligned}
\]

On the other hand, by \cite[Corollary~2.2.3]{AAR99},
\[
J_{0,p}=B(\frac{p+1}{2},\mu+\frac12),
\qquad
C_{2m}^{\mu}(1)
=
\frac{(\frac{1}{2}-m-\mu)_{m}}{\left(\frac{1}{2}\right)_m}
=
\frac{
(\mu)_m\left(\mu+\frac12\right)_m
}{
m!\left(\frac12\right)_m
}.
\]
Thus we obtain, for each \( m\ge0\),
\begin{equation*}
\frac{ d_{2m,p}}{ d_{0,p}}
=
\frac{J_{2m,p}}{C_{2m}^{\mu}(1)J_{0,p}}
=
(-1)^m
\frac{\left(-\frac p2\right)_m}
{\left(\mu+\frac p2+1\right)_m}= (-1)^m
\frac{\left(-\frac p2\right)_m}
{\left(\frac n2+\frac p2\right)_m}.
\end{equation*}
Consequently, for \(m\ge1\),
\begin{equation}
\label{eq:rho-ratio-p}
\left|
\frac{ d_{2m,p}}{ d_{2,p}}
\right|=
\prod_{j=1}^{m-1}
\frac{\left|j-\frac p2\right|}
{j+\frac n2+\frac p2}.
\end{equation}

\subsection{Affine correction in physical sectors}

By \eqref{eq:R-p-variance}, \eqref{eq:decom-phi} and
\eqref{eq:variance-Lxi-sector-block},
\begin{equation*}
\mathcal R_p(\phi)
=
\alpha_{n,p}
\frac{\tau+1}{pA_0}
\operatorname{Var}_{\xi\in\mathbb S^{n-1}}
\bigl(L_\xi(\phi)\bigr)= \sum_{\ell=1}^{\infty}
\mathcal R_p[\phi_\ell],
\end{equation*}
where
\begin{equation*}
\mathcal R_p[\phi_\ell]
=
\sum_{m=1}^{d_\ell}
\mathcal R_p^{(\ell)}(f_{\ell,m})=  \beta_{\ell,p}
\sum_{m=1}^{d_\ell}
R_\ell(f_{\ell,m})^2,
\qquad
\beta_{\ell,p}= \alpha_{n,p}
\frac{\tau+1}{pA_0}
p^2 d_{\ell,p}^2 .
\end{equation*}
Consequently, by \eqref{eq:Q-aff-decomp-p},
\begin{equation}
\label{eq:Q-aff-full-sector-decomposition}
Q_{\rm aff,p}(\phi)
=
Q_{\rm Sob,p}(\phi)
-
\sum_{\ell=1}^{\infty}
\mathcal R_p[\phi_\ell].
\end{equation}
Moreover, by \eqref{eq:rho-odd-zero-p},
\(\beta_{\ell,p}=0\) for every odd \(\ell\).
For even degrees \(\ell=2m\ge2\), since \(\beta_{\ell,p}\) is proportional to
\( d_{\ell,p}^2\), \eqref{eq:rho-ratio-p} gives
\begin{equation}
\label{eq:beta-ratio-p}
\frac{\beta_{2m,p}}{\beta_{2,p}}
=
\left|
\frac{ d_{2m,p}}{ d_{2,p}}
\right|^2=
\prod_{j=1}^{m-1}
\left(
\frac{\left|j-\frac p2\right|}
{j+\frac n2+\frac p2}
\right)^2 .
\end{equation}

Finally, if \(\ell=0\), then \(L_\xi\) is independent of \(\xi\), and hence
\(\operatorname{Var}_{\xi\in\mathbb S^{n-1}}\bigl(L_\xi\bigr)=0\).
Accordingly, the affine correction vanishes in the radial sector:
\begin{equation}
\label{eq:radial-affine-correction-vanishes}
Q_{\rm aff,p}^{(0)}(f)=Q_{\rm Sob,p}^{(0)}(f).
\end{equation}

\subsection{The Riesz identity and calibration}

The next identity is needed only in the nonradial sectors.

\begin{lemma}
\label{lem:Riesz-identity-Rell}
Let \(\ell\ge1\), and set
\(h(r)=rU'(r)\).
Then
\begin{equation}
\label{eq:Riesz-identity-Rell}
Q_{\rm Sob,p}^{(\ell)}(h,g)
=
pR_\ell(g).
\end{equation}
\end{lemma}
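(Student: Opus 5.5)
The plan is to read $Q_{\rm Sob,p}^{(\ell)}(h,g)$ as the symmetric bilinear form obtained by polarizing \eqref{eq:classical-p-sector}:
\[
Q_{\rm Sob,p}^{(\ell)}(h,g)=|\mathbb S^{n-1}|\int_0^\infty\Bigl[|U'|^{p-2}\Bigl((p-1)h'g'+\lambda_\ell\frac{hg}{r^2}\Bigr)-(p^*-1)\Lambda U^{p^*-2}hg\Bigr]r^{n-1}\,dr .
\]
I would then integrate the $h'g'$ term by parts, so that everything reduces to the radial linearized operator applied to $h=rU'$. The identity should be a direct consequence of the fact that $h$ is, up to a multiple of $U$, the dilation Jacobi field $Z_0$.

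First I introduce the radial linearized operator
\[
\mathcal L f:=-(p-1)r^{1-n}\bigl(r^{n-1}|U'|^{p-2}f'\bigr)'-(p^*-1)\Lambda U^{p^*-2}f ,
\]
which is the linearization of $-\Delta_pU-\Lambda U^{p^*-1}$ on radial functions. I would establish two facts about $\mathcal L$.

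The first is $\mathcal L Z_0=0$. This holds because $Z_0=\frac{d}{d\lambda}\big|_{\lambda=1}\lambda^{\frac{n-p}{p}}U(\lambda x)$ and the equation $-\Delta_pU=\Lambda U^{p^*-1}$ is dilation invariant under this scaling.

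The second is $\mathcal L U=\bigl((p-1)-(p^*-1)\bigr)\Lambda U^{p^*-1}$. This follows from the $(p-1)$-homogeneity of $\Delta_p$: one has $-(p-1)r^{1-n}(r^{n-1}|U'|^{p-2}U')'=(p-1)\Lambda U^{p^*-1}$.

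Since $h=Z_0-\frac{n-p}{p}U$ and $\frac{n-p}{p}(p^*-p)=p$, these two facts combine to give
\[
\mathcal L h=p\Lambda U^{p^*-1}=-p\,r^{1-n}\bigl(r^{n-1}|U'|^{p-2}U'\bigr)' .
\]

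Next, I integrate by parts in the $(p-1)h'g'$ term of the bilinear form, for $g$ smooth with compact support in $(0,\infty)$. This gives
\[
Q_{\rm Sob,p}^{(\ell)}(h,g)=|\mathbb S^{n-1}|\int_0^\infty(\mathcal Lh)\,g\,r^{n-1}\,dr+\lambda_\ell|\mathbb S^{n-1}|\int_0^\infty|U'|^{p-2}\,rU'\,g\,r^{n-3}\,dr .
\]
Inserting the formula for $\mathcal Lh$ and integrating by parts once more, the first integral becomes $p|\mathbb S^{n-1}|\int_0^\infty|U'|^{p-2}U'g'r^{n-1}\,dr$. The second integral equals $p|\mathbb S^{n-1}|\int_0^\infty|U'|^{p-2}U'\frac{\lambda_\ell}{p}\frac{g}{r}r^{n-1}\,dr$. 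Adding the two gives exactly $pR_\ell(g)$ by \eqref{eq:Rell-local-p}.

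The main obstacle is justifying these integrations by parts and extending the identity from nice $g$ to general radial profiles $f_{\ell,m}$ of elements of $\mathcal Z_U$. For this I would use the explicit asymptotics $U'(r)\sim -c\,r^{1/(p-1)}$ as $r\to0$ and $|U'|\sim r^{-(n-1)/(p-1)}$ as $r\to\infty$. These show that $r^{n-1}|U'|^{p-2}U'$ and $r^{n-1}|U'|^{p-2}h'$ are bounded near $0$ and decay suitably at infinity, so boundary terms vanish for compactly supported $g$.

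Both sides are continuous in $g$ with respect to the sector norm $\int|U'|^{p-2}(|g'|^2+\lambda_\ell g^2/r^2)r^{n-1}\,dr+\int U^{p^*-2}g^2r^{n-1}\,dr$. For the right side this follows from Cauchy--Schwarz as in the bound preceding \eqref{eq:Rp-Gp-P}, using that $h$ itself has finite sector norm. Density then concludes the proof. The case $p=2$ needs no separate treatment, but the regularity of $|U'|^{p-2}$ at $r=0$ for $p>2$ deserves a check.
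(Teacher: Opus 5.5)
Your proof is correct and follows essentially the paper's route. Both arguments rest on writing $h=Z_0-\frac{n-p}{p}U$, using the dilation Jacobi field, and using the equation for $U$ itself; your $(p-1)$-homogeneity computation of $\mathcal L U$ is exactly the paper's use of \eqref{eq:lambda-identify} at $\lambda=1$. The only difference is that the paper stays in weak form, differentiating \eqref{eq:lambda-identify} in $\lambda$ against a fixed radial test profile $g$, so no integration by parts or density step is needed there, whereas your strong-form version makes the justification explicit (boundary terms, and extension to $g=h$ and to general profiles).
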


\begin{proof}
Since
\[
-\Delta_p U_\lambda=\Lambda U_\lambda^{p^*-1},
\qquad  U_\lambda(x)=\lambda^{\frac{n-p}{p}}U(\lambda x).
\]
it follows that, for every radial test profile \(g\),
\begin{equation}
\label{eq:lambda-identify}
\int_0^\infty
|U_\lambda'|^{p-2}U_\lambda' g' r^{n-1}\,dr
=
\Lambda
\int_0^\infty
U_\lambda^{p^*-1}g r^{n-1}\,dr .
\end{equation}
Differentiating \eqref{eq:lambda-identify} at \(\lambda=1\), we obtain
\[
\begin{aligned}
&(p-1)
\int_0^\infty
|U'|^{p-2}\left(\frac{n-p}{p}U'+h'\right)g' r^{n-1}\,dr\\
&\qquad \qquad \qquad=
(p^*-1)\Lambda
\int_0^\infty
U^{p^*-2}\left(\frac{n-p}{p}U+h\right)g r^{n-1}\,dr .
\end{aligned}
\]
Combining this with \eqref{eq:lambda-identify} evaluated at \(\lambda=1\), we
find
\begin{equation}
\label{eq:h-dilation-identity}
\begin{aligned}
& (p-1)
\int_0^\infty
|U'|^{p-2}h'g' r^{n-1}\,dr
-
(p^*-1)\Lambda
\int_0^\infty
U^{p^*-2}hg r^{n-1}\,dr
\\
&\qquad \qquad \qquad
=p\int_0^\infty
|U'|^{p-2}U'g' r^{n-1}\,dr .
\end{aligned}
\end{equation}

We now insert \eqref{eq:h-dilation-identity} into the \(\ell\)-th sector
bilinear form. Using \eqref{eq:classical-p-sector},
\eqref{eq:Rell-local-p}, and \eqref{eq:h-dilation-identity}, we obtain
\[
\begin{aligned}
Q_{\rm Sob,p}^{(\ell)}(h,g)
&=
|\mathbb S^{n-1}|
\int_0^\infty
|U'|^{p-2}
\left[
(p-1)h'g'
+
\lambda_\ell\frac{hg}{r^2}
\right]
r^{n-1}\,dr\\
&\quad
-
|\mathbb S^{n-1}|
(p^*-1)\Lambda
\int_0^\infty U^{p^*-2}hg r^{n-1}\,dr \\
&=
p|\mathbb S^{n-1}|
\int_0^\infty
|U'|^{p-2}U'g' r^{n-1}\,dr
+
\lambda_\ell|\mathbb S^{n-1}|
\int_0^\infty
|U'|^{p-2}U'\frac{g}{r} r^{n-1}\,dr\\
&=
p|\mathbb S^{n-1}|
\int_0^\infty
|U'|^{p-2}U'
\left(g'+\frac{\lambda_\ell}{p}\frac{g}{r}\right)
r^{n-1}\,dr
=
pR_\ell(g).
\end{aligned}
\]
This is exactly \eqref{eq:Riesz-identity-Rell}.
\end{proof}

\begin{lemma}
\label{lem:Rell-norm-explicit}
Let
\[
I=\int_0^\infty |U'|^p r^{n-1}\,dr,
\qquad
h(r)=rU'(r).
\]
Then, for every \(\ell\ge1\),
\begin{equation}
\label{eq:Qhhell-explicit}
Q_{\rm Sob,p}^{(\ell)}(h,h)
=
|\mathbb S^{n-1}|
\bigl(\lambda_\ell-(n-p)\bigr)I .
\end{equation}
Moreover, for every \(\ell\ge2\),
\begin{equation}
\label{eq:Rell-dual-norm-explicit}
\|R_\ell\|^2_{(Q_{\rm Sob,p}^{(\ell)})^{-1}}
:=
\sup_{g\neq0}
\frac{R_\ell(g)^2}{Q_{\rm Sob,p}^{(\ell)}(g,g)}
=
\frac{|\mathbb S^{n-1}|}{p^2}
\bigl(\lambda_\ell-(n-p)\bigr)I .
\end{equation}
\end{lemma}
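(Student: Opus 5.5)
The first identity follows by plugging $g=h$ into Lemma~\ref{lem:Riesz-identity-Rell}:
\[
Q_{\rm Sob,p}^{(\ell)}(h,h)=pR_\ell(h)=p|\mathbb S^{n-1}|\int_0^\infty |U'|^{p-2}U'\Bigl(h'+\frac{\lambda_\ell}{p}\frac hr\Bigr)r^{n-1}\,dr .
\]
With $h=rU'$ we have $h/r=U'$, so the $\lambda_\ell$ term contributes $\lambda_\ell |\mathbb S^{n-1}| I$. For the remaining term we write $|U'|^{p-2}U'h'=|U'|^{p-2}U'(U'+rU'')$ and note that $|U'|^{p-2}U'U''=\frac1p(|U'|^p)'$. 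Integrating by parts,
\[
\int_0^\infty r^n\tfrac1p(|U'|^p)'\,dr=-\tfrac np I,
\]
with boundary terms vanishing because $|U'|\sim r^{1/(p-1)}$ at $0$ and $|U'|^p r^n\sim r^{-(n-p)/(p-1)}\to0$ at $\infty$. Hence $p\int|U'|^{p-2}U'h'r^{n-1}\,dr=pI-nI=-(n-p)I$, which gives \eqref{eq:Qhhell-explicit}.

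For \eqref{eq:Rell-dual-norm-explicit} I would use the Riesz representation. For $\ell\ge2$, $\lambda_\ell\ge 2n>n-p$, and the classical nondegeneracy of Subsection~\ref{subsec:classical-nondegeneracy} (trivial kernel for $\ell\ge2$) together with positivity of $Q_{\rm Sob,p}^{(\ell)}$ in these sectors should make $Q_{\rm Sob,p}^{(\ell)}$ a positive definite inner product on the sector space. The positivity is inherited from the second variation of the Sobolev inequality: $Q_{\rm Sob,p}\ge0$, the sectors decouple, and there is no kernel for $\ell\ge2$. Concretely, $Q^{(\ell)}_{\rm Sob,p}\ge Q^{(1)}_{\rm Sob,p}+(\lambda_\ell-\lambda_1)\int|U'|^{p-2}f^2r^{n-3}>0$ for $f\ne0$, since $Q^{(1)}\ge0$.

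Lemma~\ref{lem:Riesz-identity-Rell} says that $h$ represents the functional $pR_\ell$, i.e. $R_\ell(g)=Q^{(\ell)}_{\rm Sob,p}(h/p,g)$. By Cauchy--Schwarz for the positive semidefinite form $Q^{(\ell)}_{\rm Sob,p}$,
\[
R_\ell(g)^2\le \frac1{p^2}Q^{(\ell)}_{\rm Sob,p}(h,h)\,Q^{(\ell)}_{\rm Sob,p}(g,g),
\]
with equality at $g=h$. Therefore the supremum equals $p^{-2}Q^{(\ell)}_{\rm Sob,p}(h,h)$, which by \eqref{eq:Qhhell-explicit} is the claimed value.

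The main technical point is checking that $h=rU'$ is an admissible element of the sector space: it must have finite weighted energy and lie in the closure of $C_c^\infty$ profiles, so that Lemma~\ref{lem:Riesz-identity-Rell} applies with $g$ arbitrary. The latter holds provided $\int|U'|^{p-2}(|h'|^2+\lambda_\ell h^2/r^2)r^{n-1}<\infty$. Since $|h|,|rh'|\sim |U'|r$ and $|U'|^p r^{n-1}$ is integrable, and $U^{p^*-2}h^2r^{n-1}$ is integrable, this follows from the explicit asymptotics of $U$.

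Only Cauchy--Schwarz (semidefiniteness) is needed for the upper bound, and the value is attained at $g=h$. Since $\lambda_\ell>n-p$, the supremum is positive, so no degeneracy occurs.
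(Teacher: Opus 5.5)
Your proof is correct and follows the paper's route: the Riesz identity at $g=h$ gives $Q^{(\ell)}_{\rm Sob,p}(h,h)=pR_\ell(h)$, and Cauchy--Schwarz for $Q^{(\ell)}_{\rm Sob,p}$, with equality at $g=h$, gives the dual norm. The differences are minor. You evaluate $\int_0^\infty|U'|^{p-2}U'h'r^{n-1}\,dr=-\frac{n-p}{p}I$ by integrating $(|U'|^p)'$ by parts, whereas the paper uses the Euler--Lagrange equation tested against $h$ and $U$. Your identity $Q^{(\ell)}_{\rm Sob,p}=Q^{(1)}_{\rm Sob,p}+|\mathbb S^{n-1}|(\lambda_\ell-\lambda_1)\int_0^\infty|U'|^{p-2}f^2r^{n-3}\,dr$, combined with $Q^{(1)}_{\rm Sob,p}\ge0$, supplies the semidefiniteness that the paper's appeal to a trivial kernel leaves implicit.
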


\begin{proof}
By \eqref{eq:Rell-local-p},
\[
\begin{aligned}
R_\ell(h)
&=
|\mathbb S^{n-1}|
\int_0^\infty
|U'|^{p-2}U'
\left(h'+\frac{\lambda_\ell}{p}\frac{h}{r}
\right)
r^{n-1}\,dr\\
&=
|\mathbb S^{n-1}|
\int_0^\infty
|U'|^{p-2}U'h' r^{n-1}\,dr
+
\frac{\lambda_\ell}{p}
|\mathbb S^{n-1}|I .
\end{aligned}
\]
Testing \eqref{eq:lambda-identify} with \(\lambda=1\) against \(U\), we obtain
\[
I=\int_0^\infty |U'|^p r^{n-1}\,dr
=
\Lambda
\int_0^\infty U^{p^*}r^{n-1}\,dr .
\]
Next, applying \eqref{eq:lambda-identify} with \(\lambda=1\) and \(g=h=rU'\),
we get
\[
\begin{aligned}
\int_0^\infty |U'|^{p-2}U'h' r^{n-1}\,dr
&=
\Lambda
\int_0^\infty U^{p^*-1}rU' r^{n-1}\,dr
=
\frac{\Lambda}{p^*}
\int_0^\infty r^n (U^{p^*})'\,dr\\
&=
-\frac{n}{p^*}\Lambda
\int_0^\infty U^{p^*}r^{n-1}\,dr
=
-\frac{n-p}{p}I .
\end{aligned}
\]
Therefore
\[
R_\ell(h)
=
\frac{|\mathbb S^{n-1}|}{p}
\bigl(\lambda_\ell-(n-p)\bigr)I .
\]
By Lemma~\ref{lem:Riesz-identity-Rell}, taking \(g=h\), we obtain
\[
Q_{\rm Sob,p}^{(\ell)}(h,h)
=
pR_\ell(h)
=
|\mathbb S^{n-1}|
\bigl(\lambda_\ell-(n-p)\bigr)I ,
\]
which proves \eqref{eq:Qhhell-explicit}.

It remains to identify the dual norm for \(\ell\ge2\). By Subsection~\ref{subsec:classical-nondegeneracy}, \(Q_{\rm Sob,p}^{(\ell)}\) has trivial
kernel in each sector \(\ell\ge2\). Hence its polarization defines a positive
definite inner product on the \(\ell\)-th sector. By
Lemma~\ref{lem:Riesz-identity-Rell}, together with the Cauchy inequality for
this inner product,
\[
R_\ell(g)^2
=
\frac1{p^2}
Q_{\rm Sob,p}^{(\ell)}(h,g)^2
\le
\frac1{p^2}
Q_{\rm Sob,p}^{(\ell)}(h,h)
Q_{\rm Sob,p}^{(\ell)}(g,g).
\]
Equality is attained for \(g=h\). Therefore
\[
\|R_\ell\|^2_{(Q_{\rm Sob,p}^{(\ell)})^{-1}}
=
\sup_{g\neq0}
\frac{R_\ell(g)^2}{Q_{\rm Sob,p}^{(\ell)}(g,g)}
=
\frac1{p^2}Q_{\rm Sob,p}^{(\ell)}(h,h)
=
\frac{|\mathbb S^{n-1}|}{p^2}
\bigl(\lambda_\ell-(n-p)\bigr)I .
\]
This proves \eqref{eq:Rell-dual-norm-explicit}.
\end{proof}

\begin{lemma}
\label{lem:degree-two-calibration-p}
We have
\begin{equation}
\label{eq:beta-two-explicit-p}
\beta_{2,p}
=
\frac{p^2}{|\mathbb S^{n-1}|(n+p)I}.
\end{equation}
\end{lemma}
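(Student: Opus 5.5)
The proof is a direct computation: I will express every factor in $\beta_{2,p}$ through the single quantity $m_{n,p}$ together with $|\mathbb S^{n-1}|$ and $I$. The closed-form Funk--Hecke ratio \eqref{eq:rho-ratio-p}, in its signed form $d_{2m,p}/d_{0,p}$, then removes the remaining angular constant.

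Recall that
\[
\beta_{2,p}=\alpha_{n,p}\frac{\tau+1}{pA_0}\,p^2 d_{2,p}^2,
\qquad
\alpha_{n,p}=m_{n,p}^{-1},
\qquad
\tau+1=\frac{n+p}{p}.
\]
For $A_0$ I use the radial computation at the start of Section~\ref{subsec:second-variation-affine-p}. Since $U$ is radial, $A_0=m_{n,p}\|\nabla U\|_{L^p}^p=m_{n,p}|\mathbb S^{n-1}|\,I$, where $I=\int_0^\infty|U'|^pr^{n-1}dr$ as in Lemma~\ref{lem:Rell-norm-explicit}. Substituting,
\[
\beta_{2,p}=\frac{(n+p)\,d_{2,p}^2}{m_{n,p}^2\,|\mathbb S^{n-1}|\,I}.
\]

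Next I identify $d_{0,p}$. Take $\ell=0$ and $Y_{0,1}\equiv1$ in \eqref{eq:p-FH-identity}. This gives $d_{0,p}=\langle|\theta\cdot\xi|^p\rangle_\theta$, which equals $m_{n,p}$ by rotation invariance, or equivalently by \eqref{eq:identify-z}. Hence $d_{2,p}^2/m_{n,p}^2=(d_{2,p}/d_{0,p})^2$.

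The ratio formula with $m=1$ gives
\[
\frac{d_{2,p}}{d_{0,p}}=-\frac{(-p/2)}{(n+p)/2}=\frac{p}{n+p}.
\]
Therefore
\[
\beta_{2,p}=\frac{(n+p)\,p^2}{(n+p)^2\,|\mathbb S^{n-1}|\,I}=\frac{p^2}{|\mathbb S^{n-1}|(n+p)I},
\]
which is \eqref{eq:beta-two-explicit-p}.

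The only delicate point is making sure the normalizations agree. The paper uses averaged measures $\langle\cdot\rangle$ both in \eqref{eq:p-FH-identity} and in the definition of $m_{n,p}$, and $Y_{0,1}=1$ is consistent with \eqref{eq:normalization-harmonics}. As an independent check on the value $p/(n+p)$, I would avoid the hypergeometric route. Differentiating \eqref{eq:identify-z} twice, as in \eqref{eq:angular-quadratic-identity-p}, and pairing with the degree-two harmonic $\theta_1^2-1/n$ gives $d_{2,p}$ via the moment $\langle|\omega_1|^{p+2}\rangle=m_{n,p}\frac{p+1}{n+p}$. That moment follows from the Beta-function integrals already used in Section~\ref{sec:sector-decomposition-affine-p}. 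I expect this consistency check to be the main, though minor, obstacle.
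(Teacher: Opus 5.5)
Your computation is correct, but it takes a different route from the paper. The paper never evaluates $\alpha_{n,p}$, $A_0$ or $d_{2,p}$ explicitly. It uses $SL(n)$-invariance to get $Q_{\rm aff,p}(Z_B)=0$ for $Z_B=x\cdot B\nabla U=c_B\,h(r)Y_{2,m}(\theta)$, so that $0=Q^{(2)}_{\rm Sob,p}(h,h)-\beta_{2,p}R_2(h)^2$. The Riesz identity $R_2(h)=\frac1p Q^{(2)}_{\rm Sob,p}(h,h)$, together with $Q^{(2)}_{\rm Sob,p}(h,h)=|\mathbb S^{n-1}|(\lambda_2-(n-p))I=|\mathbb S^{n-1}|(n+p)I$, then forces $\beta_{2,p}=p^2/Q^{(2)}_{\rm Sob,p}(h,h)$.

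The paper's argument explains why the value must be what it is: the exact criticality $\beta_{2,p}\|R_2\|^2_{(Q^{(2)}_{\rm Sob,p})^{-1}}=1$ in the degree-two sector is forced by affine invariance. It is also insensitive to normalization constants. However, it leans on the invariance step, namely the vanishing first variation of $\delta_{\rm aff}$ at $U$ and the extension of the second-order expansion to the non-compactly supported $Z_B$.

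Your route evaluates the definition directly, using $A_0=m_{n,p}|\mathbb S^{n-1}|I$, $d_{0,p}=m_{n,p}$ and $d_{2,p}/d_{0,p}=p/(n+p)$. Your elementary moment identity $\langle|\omega_1|^{p+2}\rangle=m_{n,p}\frac{p+1}{n+p}$ confirms that last ratio without Saalsch\"utz. This route is independent of the invariance argument. Because it agrees with the paper's value, it is a genuine consistency check on the factor $\frac{\tau+1}{2A_0}$ in the expansion of the negative mean and on the normalizations of $\alpha_{n,p}$, $Y_{\ell,m}$ and $R_\ell$. Equivalently, combined with Lemma~\ref{lem:Rell-norm-explicit}, your computation reproves $Q_{\rm aff,p}(Z_B)=0$ by direct calculation rather than by symmetry.
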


\begin{proof}
Let \(B=B^T\) with \(\operatorname{tr}B=0\). Then
\[
Z_B(x):=x\cdot B\nabla U(x) =
rU'(r)\,\theta\cdot B\theta .
\]
Since \(\operatorname{tr}B=0\), the function \(\theta\mapsto \theta\cdot B\theta\)
is a spherical harmonic of degree two on \(\mathbb S^{n-1}\).

Moreover, since
\(\det(e^{tB})=e^{t\operatorname{tr}B}=1\), the curve \(U_t(x)=U(e^{tB}x)\)
is generated by volume-preserving affine transformations. Hence both the
affine energy and the \(L^{p^*}(\mathbb R^n)\)-norm are invariant along this
curve. Differentiating twice at \(t=0\), and using
\[
\frac{d}{dt}\bigg|_{t=0}U(e^{tB}x)
=
x\cdot B\nabla U(x)
=
Z_B(x),
\]
we obtain
\begin{equation}
\label{eq:Qzb=0}
 Q_{\rm aff,p}(Z_B)=0.
\end{equation}

Choose \(B\neq0\) with \(B=B^T\), \(\operatorname{tr}B=0\), and write
\[
\theta\cdot B\theta
=
c_B Y_{2,m}(\theta),
\qquad
\left\langle Y_{2,m}^2\right\rangle_{\mathbb S^{n-1}}=1,
\qquad c_B\neq0.
\]
Then
\[
Z_B(x)=c_B h(r)Y_{2,m}(\theta),
\qquad
h(r)=rU'(r).
\]
By \eqref{eq:Q-aff-full-sector-decomposition}, \eqref{eq:Qzb=0}, and
Lemma~\ref{lem:Riesz-identity-Rell},
\[
0=  Q_{\rm aff,p}^{(2)}(h)
=
Q_{\rm Sob,p}^{(2)}(h,h)
-
\beta_{2,p}R_2(h)^2=  Q_{\rm Sob,p}^{(2)}(h,h)
-
\beta_{2,p}
\frac1{p^2}
Q_{\rm Sob,p}^{(2)}(h,h)^2 .
\]
Combining this with Lemma~\ref{lem:Rell-norm-explicit}, we obtain
\[
\beta_{2,p}
=
\frac{p^2}{Q_{\rm Sob,p}^{(2)}(h,h)}
        =
\frac{p^2}{|\mathbb S^{n-1}|(n+p)I},
\]
which is exactly \eqref{eq:beta-two-explicit-p}. The proof is complete.
\end{proof}

\subsection{The degree-two affine kernel}

We now identify the kernel of the affine Hessian in the degree-two sector. For
convenience, we write
\[
Q_{\rm aff,p}[\phi_\ell]
:=
\sum_{m=1}^{d_\ell}
Q_{\rm aff,p}^{(\ell)}(f_{\ell,m}).
\]

\begin{lemma}
\label{lem:degree-two-affine-sector-p}
In the physical degree-two sector,
\[
\left\{
\phi_2:
Q_{\rm aff,p}[\phi_2]=0
\right\}
=
\left\{
x\cdot B\nabla U:
B=B^T,\ \operatorname{tr}B=0
\right\}.
\]
\end{lemma}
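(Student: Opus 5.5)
The plan is to work one spherical harmonic at a time inside the degree-two sector, and then to show that the affine Hessian restricted to each radial profile is a rank-one perturbation of a positive definite form.

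By \eqref{eq:Q-aff-full-sector-decomposition} and the block structure of $\mathcal R_p$, for $\phi_2=\sum_m f_{2,m}Y_{2,m}$ we have
\[
Q_{\rm aff,p}[\phi_2]=\sum_{m=1}^{d_2}\Bigl(Q^{(2)}_{\rm Sob,p}(f_{2,m},f_{2,m})-\beta_{2,p}R_2(f_{2,m})^2\Bigr).
\]
So it suffices to analyse, for a single radial profile $f$, the form
\[
q(f):=Q^{(2)}_{\rm Sob,p}(f,f)-\beta_{2,p}R_2(f)^2 .
\]

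By Subsection~\ref{subsec:classical-nondegeneracy}, $Q^{(2)}_{\rm Sob,p}$ has trivial kernel, and it is nonnegative by the classical nonnegativity of $Q_{\rm Sob,p}$. Hence it is positive definite and gives an inner product $\langle\cdot,\cdot\rangle_2$ on the sector. Lemma~\ref{lem:Riesz-identity-Rell} says that $R_2(g)=p^{-1}\langle h,g\rangle_2$ with $h=rU'$, and Lemma~\ref{lem:degree-two-calibration-p} combined with \eqref{eq:Qhhell-explicit} gives $\beta_{2,p}=p^2/\langle h,h\rangle_2$. Therefore
\[
q(f)=\langle f,f\rangle_2-\frac{\langle h,f\rangle_2^2}{\langle h,h\rangle_2}.
\]
This is exactly the squared $\langle\cdot,\cdot\rangle_2$-norm of the component of $f$ orthogonal to $h$. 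By the equality case of Cauchy--Schwarz, $q(f)\ge0$, with $q(f)=0$ if and only if $f=ch$ for some constant $c$.

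Consequently $Q_{\rm aff,p}[\phi_2]=0$ holds if and only if every $f_{2,m}=c_m h$. This is equivalent to
\[
\phi_2=h(r)\,Y(\theta),\qquad Y=\sum_m c_mY_{2,m}\ \text{an arbitrary degree-two harmonic}.
\]
The degree-two spherical harmonics are exactly the restrictions $\theta\cdot B\theta$ with $B=B^T$ and $\operatorname{tr}B=0$, since the space of trace-free symmetric matrices maps isomorphically onto harmonic quadratic polynomials. Hence the zero set is $\{rU'(r)\,\theta\cdot B\theta\}=\{x\cdot B\nabla U\}$. The reverse inclusion is already \eqref{eq:Qzb=0}.

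The main obstacle is justifying that $\langle\cdot,\cdot\rangle_2$ is a genuine inner product on the relevant function space, so that Cauchy--Schwarz and its equality case apply. Two points are needed:
\begin{itemize}
\item $Q^{(2)}_{\rm Sob,p}\ge0$, and not merely that it has trivial kernel. This follows from the second variation of the classical Sobolev deficit at its minimizer $U$.
\item $h=rU'$ lies in the sector's form domain within $\mathcal Z_U$, which follows from the decay of $U'$.
\end{itemize}
I would also check that $Q^{(2)}_{\rm Sob,p}(h,h)=|\mathbb S^{n-1}|(n+p)I>0$, since $\lambda_2-(n-p)=2n-n+p=n+p$. This ensures $\beta_{2,p}$ is finite and the degeneracy of $q$ occurs only along $h$.
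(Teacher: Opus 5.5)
Your proposal is correct and follows essentially the same route as the paper. Both reduce to a single harmonic component and use the Riesz identity together with the degree-two calibration to write $Q^{(2)}_{\rm aff,p}(f)=\|f\|_2^2-\langle f,h\rangle_2^2/\|h\|_2^2$ with $h=rU'$. Both then conclude by the equality case of Cauchy--Schwarz and identify degree-two harmonics with $\theta\cdot B\theta$, $B=B^T$, $\operatorname{tr}B=0$. Your side remark that $Q^{(2)}_{\rm Sob,p}\ge0$, and not merely trivial kernel, is needed to call it positive definite is a point the paper passes over silently.
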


\begin{proof}
Set \(h(r)=rU'(r)\).
Fix one degree-two spherical harmonic \(Y_{2,m}\). By
\eqref{eq:Q-aff-full-sector-decomposition},
\begin{equation}
\label{eq:degree-2}
Q_{\rm aff,p}^{(2)}(f)
=
Q_{\rm Sob,p}^{(2)}(f)
-
\beta_{2,p}R_2(f)^2 .
\end{equation}
Since the classical \(p\)-Sobolev Hessian
\eqref{eq:classical-p-homogeneous-Hessian} has no kernel in the degree-two
sector, \(Q_{\rm Sob,p}^{(2)}\) is positive definite. We write
\[
\|f\|_2^2:=Q_{\rm Sob,p}^{(2)}(f),
\qquad
\langle f,g\rangle_2:=Q_{\rm Sob,p}^{(2)}(f,g).
\]
Hence, by Lemma~\ref{lem:Riesz-identity-Rell},
Lemma~\ref{lem:Rell-norm-explicit},
Lemma~\ref{lem:degree-two-calibration-p}, and \eqref{eq:degree-2},
\begin{equation}
\label{eq:nonnegative-2-sector}
Q_{\rm aff,p}^{(2)}(f)
=
Q_{\rm Sob,p}^{(2)}(f)
-
\beta_{2,p}R_2(f)^2
=
\|f\|_2^2
-
\frac{|\langle f,h\rangle_2|^2}{\|h\|_2^2}\ge0,
\end{equation}
with equality if and only if \(f\in\operatorname{span}\{h\}\).

Now let
\[
\phi_2(r,\theta)
=
\sum_{m=1}^{d_2}f_m(r)Y_{2,m}(\theta).
\]
Then \eqref{eq:nonnegative-2-sector} yields
\[
Q_{\rm aff,p}[\phi_2]
=
\sum_{m=1}^{d_2}Q_{\rm aff,p}^{(2)}(f_m)=0 \quad\Longleftrightarrow\quad
f_m\in\operatorname{span}\{rU'\}
\qquad
\text{for every }m.
\]
Every degree-two spherical harmonic is the restriction of a trace-free
quadratic polynomial. Hence there exists \(B=B^T\), \(\operatorname{tr}B=0\),
such that
\[
\phi_2(r,\theta)
=
rU'(r)\sum_{m=1}^{d_2}c_mY_{2,m}(\theta)
=
rU'(r)\,\theta\cdot B\theta
=
x\cdot B\nabla U(x).
\]
The converse follows from \eqref{eq:Qzb=0}. This proves the lemma.
\end{proof}

\subsection{The higher even sectors}

\begin{lemma}
\label{lem:high-even-sector-p}
For every even \(\ell\ge4\), there exists \(\eta_{n,p}>0\), independent of
\(\ell\), such that,
\[
Q_{\rm aff,p}[\phi_\ell]
\ge
\eta_{n,p}Q_{\rm Sob,p}[\phi_\ell],
\qquad
\ell=4,6,8,\dots .
\]
\end{lemma}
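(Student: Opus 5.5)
We argue sector by sector, exactly as in the degree-two case. Fix an even \(\ell=2m\ge4\) and a single component \(f(r)Y_{\ell,m'}(\theta)\). By \eqref{eq:Q-aff-full-sector-decomposition},
\[
Q_{\rm aff,p}^{(\ell)}(f)=Q_{\rm Sob,p}^{(\ell)}(f)-\beta_{\ell,p}R_\ell(f)^2 .
\]
Since \(Q_{\rm Sob,p}^{(\ell)}\) is positive definite for \(\ell\ge2\) (Subsection~\ref{subsec:classical-nondegeneracy}), Lemma~\ref{lem:Rell-norm-explicit} gives the sharp bound
\[
R_\ell(f)^2\le \frac{|\mathbb S^{n-1}|}{p^2}\bigl(\lambda_\ell-(n-p)\bigr)I\,Q_{\rm Sob,p}^{(\ell)}(f).
\]
Hence
\[
Q_{\rm aff,p}^{(\ell)}(f)\ge\bigl(1-\theta_\ell\bigr)Q_{\rm Sob,p}^{(\ell)}(f),
\qquad
\theta_\ell:=\beta_{\ell,p}\frac{|\mathbb S^{n-1}|}{p^2}\bigl(\lambda_\ell-(n-p)\bigr)I .
\]
Summing over \(m'\), it suffices to show \(\sup_{\ell\ge4,\ \ell\text{ even}}\theta_\ell<1\), and then to take \(\eta_{n,p}:=1-\sup\theta_\ell\).

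Next we insert the calibration of Lemma~\ref{lem:degree-two-calibration-p}, \(\beta_{2,p}=p^2/(|\mathbb S^{n-1}|(n+p)I)\). Note that \(\lambda_2-(n-p)=2n-(n-p)=n+p\), so \(\theta_2=1\), consistent with the degree-two kernel. Using \eqref{eq:beta-ratio-p},
\[
\theta_{2m}=\frac{\beta_{2m,p}}{\beta_{2,p}}\cdot\frac{\lambda_{2m}-(n-p)}{n+p}
=\frac{2m(2m+n-2)-(n-p)}{n+p}\prod_{j=1}^{m-1}\left(\frac{|j-\frac p2|}{j+\frac n2+\frac p2}\right)^2 .
\]

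The main step is to show \(\theta_{2m}\le\theta^*<1\) uniformly in \(m\ge2\). We do this in two parts.

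\textbf{Decay for large \(m\).} Each factor satisfies \(|j-\frac p2|/(j+\frac{n+p}2)\le 1-\frac{c}{j}\), with a decay rate of at least \(j^{-(n+p)/2-\dots}\). More precisely, for \(j\ge p/2\) the factor equals \((j-\frac p2)/(j+\frac{n+p}{2})\). Its square has product behaving like \(m^{-2(n/2+p)}=m^{-(n+2p)}\) up to constants, by the Gamma-function asymptotics \(\prod (j+a)/(j+b)\sim m^{a-b}\). The prefactor is only \(O(m^2)\). Since \(n+2p>2\), we get \(\theta_{2m}\to0\).

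\textbf{Finitely many small \(m\).} Here we must check \(\theta_{2m}<1\) directly. For \(m=2\),
\[
\theta_4=\frac{4(n+2)-(n-p)}{n+p}\cdot\frac{(1-\frac p2)^2}{(1+\frac{n+p}2)^2}
=\frac{(3n+8+p)(p-2)^2}{(n+p)(n+p+2)^2}.
\]
This is \(<1\) since \(p<n\) gives \((p-2)^2<(n+p+2)^2/4\) and \(3n+8+p< 4(n+p)\) when \(n+3p>8\), with small cases checked by hand. For \(p=2\), \(\theta_{2m}=0\) for all \(m\ge2\), which is trivial.

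To handle general \(m\) cleanly, we show monotonicity. The ratio
\[
\frac{\theta_{2m+2}}{\theta_{2m}}=\frac{\lambda_{2m+2}-(n-p)}{\lambda_{2m}-(n-p)}\left(\frac{m-\frac p2}{m+\frac{n+p}2}\right)^2
\]
is \(<1\) whenever \(m\ge2\). The first factor is at most \((2m+2)(2m+n)/((2m)(2m+n-2))\)-ish, i.e. \(1+O(1/m)\). The squared factor is at most \(\bigl(1-\frac{n/2+p}{m+(n+p)/2}\bigr)^2\), whose gain dominates because \(n+2p>2\). This reduces everything to \(\theta_4<1\), but the elementary inequality must be verified carefully; this is the expected obstacle. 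If monotonicity is too messy, we fall back on the crude bound: every factor with \(j\ge1\) is \(\le \frac{n}{n+p+2}\) in absolute value (using \(p<n\)), which makes the product geometric against a quadratic prefactor.

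Finally, \(\eta_{n,p}=1-\max_{m\ge2}\theta_{2m}>0\) is independent of \(\ell\), which proves the lemma.
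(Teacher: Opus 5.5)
\textbf{Gap.} Your route is the paper's route. You bound each component by the dual norm from Lemma~\ref{lem:Rell-norm-explicit} and insert the calibration of Lemma~\ref{lem:degree-two-calibration-p}. The resulting quantity $\theta_{2m}$ is exactly the paper's $\nu_m$. Your value of $\theta_4$ and the check $\theta_4<1$ are correct; in fact $n+3p>8$ always holds, since $n\ge3$ and $p\ge2$, so there are no small cases left to check by hand.

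The problem is the step that gives uniformity in $\ell$. Knowing $\theta_{2m}\to0$ and $\theta_4<1$ does not give $\sup_{m\ge2}\theta_{2m}<1$: you also need $\theta_{2m}<1$ for every intermediate $m$, and neither of your two arguments supplies this.
\begin{itemize}
\item Your monotonicity argument (first factor $1+O(1/m)$, squared factor about $1-(n+2p)/m$, ``the gain dominates'') is purely asymptotic and says nothing for a fixed finite $m$. Also, subtracting $n-p$ makes the first factor \emph{larger} than $\lambda_{2m+2}/\lambda_{2m}$, not smaller.
\item Your fallback is false. For $j>p/2$ one has $|j-\frac p2|/(j+\frac{n+p}2)\to1$. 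For example, $n=3$, $p=2$, $j=100$ gives about $0.966>3/7$. So the product is not geometric.
\end{itemize}

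\textbf{The missing inequality.} The paper asserts the ratio bound \eqref{eq:nu-de} as ``a direct computation'', so this is the one inequality you actually have to prove; it is short. Set
\[
D_m:=\lambda_{2m}-(n-p)=4m^2+2m(n-2)-(n-p),
\qquad
t:=m-\tfrac p2,
\qquad
s:=m+\tfrac{n+p}2 .
\]
Then
\[
\theta_{2m+2}=\theta_{2m}\cdot\frac{D_{m+1}}{D_m}\cdot\frac{t^2}{s^2}.
\]
Writing it this way avoids dividing by $\theta_{2m}$, which vanishes when $p$ is an even integer and $m>p/2$. Two identities are needed:
\[
D_{m+1}-D_m=8m+2n=4\bigl(2m+\tfrac n2\bigr),
\qquad
s^2-t^2=\bigl(\tfrac n2+p\bigr)\bigl(2m+\tfrac n2\bigr).
\]
With these, $D_{m+1}t^2<D_ms^2$ reduces, after cancelling $D_mt^2$ and the positive factor $2m+\frac n2$, to
\[
(2m-p)^2<\bigl(\tfrac n2+p\bigr)D_m .
\]
This holds for every $m\ge1$, for the following reasons.
\begin{itemize}
\item $(2m-p)^2\le 4m^2+p^2$.
\item $D_m\ge 4m^2+n+p-4>4m^2$.
\item $D_m\ge D_1=n+p>p$.
\item $\tfrac n2+p\ge 1+p$, hence $\bigl(\tfrac n2+p\bigr)D_m\ge D_m+pD_m>4m^2+p^2$.
\end{itemize}
Because the inequality already holds at $m=1$, you get $\theta_4<\theta_2=1$ for free, and then $\theta_{2m}\le\theta_4$ for all $m\ge2$. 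So $\eta_{n,p}=1-\theta_4$ works, and your Gamma-function decay argument becomes unnecessary.
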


\begin{proof}
Let \(\ell=2m\), with \(m\ge2\). We first work in a single angular component.
By Lemma~\ref{lem:Rell-norm-explicit},
\[
\|R_{2m}\|^2_{(Q_{\rm Sob,p}^{(2m)})^{-1}}
=
\frac{|\mathbb S^{n-1}|}{p^2}
\bigl(\lambda_{2m}-(n-p)\bigr)I .
\]
Therefore, by \eqref{eq:beta-ratio-p} and
Lemma~\ref{lem:degree-two-calibration-p},
\[
\begin{aligned}
\beta_{2m,p}
\|R_{2m}\|^2_{(Q_{\rm Sob,p}^{(2m)})^{-1}}
&=
\frac{\beta_{2m,p}}{\beta_{2,p}}
\frac{\lambda_{2m}-(n-p)}{n+p}\\
&=
\frac{\lambda_{2m}-(n-p)}{n+p}
\prod_{j=1}^{m-1}
\left(
\frac{\left|j-\frac p2\right|}
{j+\frac n2+\frac p2}
\right)^2
=:\nu_m .
\end{aligned}
\]
A direct computation gives, for \(m\ge2\),
\begin{equation}
\label{eq:nu-de}
\frac{\nu_{m+1}}{\nu_m}
=
\frac{\lambda_{2m+2}-(n-p)}
{\lambda_{2m}-(n-p)}
\left(
\frac{|m-\frac p2|}{m+\frac n2+\frac p2}
\right)^2
<1.
\end{equation}
Since \(\lambda_4=4(n+2)\),
\[
\nu_2
=
\frac{3n+p+8}{n+p}
\left(
\frac{p-2}{n+p+2}
\right)^2<1.
\]
Hence, for every even \(\ell=2m\ge4\), \eqref{eq:nu-de} implies
\[
\beta_{\ell,p}
\|R_\ell\|^2_{(Q_{\rm Sob,p}^{(\ell)})^{-1}}
\le
1-\eta_{n,p},
\qquad  \eta_{n,p}:=1-\nu_2.
\]
By the definition of the dual norm \eqref{eq:Rell-dual-norm-explicit},
\[
\beta_{\ell,p}R_\ell(f)^2
\le
\beta_{\ell,p}  \|R_\ell\|^2_{(Q_{\rm Sob,p}^{(\ell)})^{-1}}
 Q_{\rm Sob,p}^{(\ell)}(f)
\le
(1-\eta_{n,p})Q_{\rm Sob,p}^{(\ell)}(f).
\]
Therefore, by \eqref{eq:Q-aff-full-sector-decomposition},
\[
Q_{\rm aff,p}^{(\ell)}(f)
=
Q_{\rm Sob,p}^{(\ell)}(f) -
\beta_{\ell,p}R_\ell(f)^2
\ge
\eta_{n,p}Q_{\rm Sob,p}^{(\ell)}(f).
\]
Summing this one-component estimate over all components yields
\[
Q_{\rm aff,p}[\phi_\ell]
\ge
\eta_{n,p}Q_{\rm Sob,p}[\phi_\ell].
\]
This completes the proof.
\end{proof}

\subsection{Kernel identification}

By \eqref{eq:redial-kernel} and \eqref{eq:radial-affine-correction-vanishes},
\[
Q_{\rm aff,p}[\phi_0]
=
Q_{\rm Sob,p}[\phi_0]=0
\quad\Longleftrightarrow\quad
\phi_0\in\operatorname{span}\{U,Z_0\},
\]

In the first angular sector, by \eqref{eq:rho-odd-zero-p}, \( d_{1,p}=0\), so
by \eqref{eq:1st-kernel}, \eqref{eq:Q-aff-full-sector-decomposition},
\[
Q_{\rm aff,p}[\phi_1]
=
Q_{\rm Sob,p}[\phi_1]=0
\quad\Longleftrightarrow\quad
\phi_1\in
\operatorname{span}
\left\{
\partial_{x_1}U,\dots,\partial_{x_n}U
\right\}.
\]

In the degree-two sector, Lemma~\ref{lem:degree-two-affine-sector-p} gives
\[
Q_{\rm aff,p}[\phi_2]=0
\quad\Longleftrightarrow\quad
\phi_2\in
\left\{
x\cdot B\nabla U:
 B=B^T,\ \operatorname{tr}B=0
\right\}.
\]

For odd sectors \(\ell\ge3\), \eqref{eq:rho-odd-zero-p} gives \( d_{\ell,p}=0\),
and hence
\[
Q_{\rm aff,p}[\phi_\ell]
=
Q_{\rm Sob,p}[\phi_\ell],
\qquad
\ell=3,5,7,\dots .
\]
Since the classical Hessian has no kernel in sectors \(\ell\ge2\), these odd
sectors are strictly positive.

For even sectors \(\ell\ge4\), Lemma~\ref{lem:high-even-sector-p} yields
\[
Q_{\rm aff,p}[\phi_\ell]
\ge
\eta_{n,p}Q_{\rm Sob,p}[\phi_\ell],
\qquad
\ell=4,6,8,\dots .
\]
Thus no additional zero modes arise in the higher even sectors.

Consequently,
\begin{equation}
\label{eq:p-affine-kernel}
\ker Q_{\rm aff,p}
=
\operatorname{span}
\left\{
U,\ Z_0,\ \partial_{x_1}U,\dots,\partial_{x_n}U,\ x\cdot B\nabla U
\right\},
\end{equation}
where \( B=B^T\), \(  \operatorname{tr}B=0\).

\noindent\textbf{Conflict of interest:}
Authors state no conflict of interest.

\noindent\textbf{Data Availability Statement:}
Data sharing is not applicable to this article as no datasets were generated or analysed during the current study.

\noindent{\bf Acknowledgement.}
G-D.~Li was supported by NSFC (No.12561019).

\providecommand{\href}[2]{#2}
\providecommand{\arxiv}[1]{\href{http://arxiv.org/abs/#1}{arXiv:#1}}
\providecommand{\url}[1]{\texttt{#1}}
\providecommand{\urlprefix}{DOI }


\end{document}